\documentclass[12pt,letterpaper]{scrartcl}
\usepackage[utf8]{inputenc}
\usepackage{tikz,ifthen}
\usepackage{amsthm,amsfonts, mathtools, amssymb, bbm,amsmath, mathdots,mathrsfs}
\usetikzlibrary{matrix,decorations.pathreplacing}
\usetikzlibrary{cd}
\usepackage[margin=1in]{geometry} 
\usepackage[square,numbers,sort&compress]{natbib}
\usepackage{graphicx}
\usepackage{transparent}
\usepackage[percent]{overpic}
\usepackage{caption}
\usepackage{subcaption}
\usepackage{xcolor,color,soul} 
\usepackage{enumitem} 
\usepackage{physics}
\usepackage[export]{adjustbox} 
\usepackage{appendix}
\definecolor{Myblue}{rgb}{0,0,0.6}  
\definecolor{Myred}{rgb}{0.6,0,0}
\definecolor{Mygreen}{rgb}{0,0.6,0}
\usepackage[colorlinks,citecolor=Myblue,linkcolor=Myblue,urlcolor=Myblue,pdfpagemode=None]{hyperref}
\allowdisplaybreaks 
\usepackage{stmaryrd}

\theoremstyle{definition}
\newtheorem{definition}{Definition}
\newtheorem{notation}[definition]{Notation}
\newtheorem{theorem}[definition]{Theorem}
\newtheorem{lemma}[definition]{Lemma}
\newtheorem{proposition}[definition]{Proposition}
\newtheorem{example}[definition]{Example}
\newtheorem{corollary}[definition]{Corollary}
\newtheorem{remark}[definition]{Remark}
\newtheorem{conjecture}{Conjecture}

\numberwithin{definition}{section}
\numberwithin{equation}{section}
\numberwithin{figure}{section}

\newcommand{\id}[1]{\operatorname{id}_{#1}} 
\newcommand{\unit}{\mathbbm{1}}
\newcommand{\Vect}{\operatorname{Vect}}
\newcommand{\GL}{\operatorname{GL}}
\newcommand{\SL}{\operatorname{SL}}
\newcommand{\prl}{\operatorname{Pr}^\mathrm{L}}
\newcommand{\Cat}{\operatorname{Cat}}
\newcommand{\ind}[1]{\widehat{#1}}
\newcommand{\Z}{\mathcal{Z}} 
\newcommand{\HC}[1]{\mathcal{HC}_{#1}}
\newcommand{\totimes}{\tilde{\otimes}}

\newcommand{\Mat}{\mathrm{Mat}}
\newcommand{\Hom}{\mathrm{Hom}} 
\newcommand{\End}{\mathrm{End}}
\newcommand{\Rep}{\mathrm{Rep}}
\newcommand{\Mod}{\mathrm{Mod}}
\newcommand{\Bimod}{\mathrm{Bimod}}
\newcommand{\Ext}{\operatorname{Ext}}
\newcommand{\Fun}{\operatorname{Fun}}
\newcommand{\GKdim}{\operatorname{GKdim}}
\newcommand{\grade}{\operatorname{j}}
\newcommand{\pdim}{\operatorname{pdim}}

\newcommand{\Oq}{\mathcal{O}_q}
\newcommand{\Dq}{\mathcal{D}_q}
\newcommand{\D}{\mathcal{D}}
\renewcommand{\O}{\mathcal{O}}
\newcommand{\OA}{\mathcal{O}_{\mathcal{A}}}
\newcommand{\OFRT}{\mathcal{O}'_{\mathcal{A}}}
\newcommand{\Ahat}{\ind{\mathcal{A}}}

\newcommand{\Aplus}{\A^+}
\newcommand{\opp}{^\mathrm{op}}
\newcommand{\OqGL}[1]{\mathcal{O}_q (\GL_{#1})}
\newcommand{\OqSL}[1]{\mathcal{O}_q (\SL_{#1})}
\newcommand{\DqGL}[1]{\mathcal{D}_q(\GL_{#1})}
\newcommand{\DqSL}[1]{\mathcal{D}_q(\SL_{#1})}
\newcommand{\Uqgl}[1]{\operatorname{U}_q \mathfrak{gl}_{#1}}
\newcommand{\Uqsl}[1]{\operatorname{U}_q \mathfrak{sl}_{#1}}
\newcommand{\DqG}{\mathcal{D}_q(G)}
\newcommand{\OqG}{\mathcal{O}_q(G)}
\newcommand{\OqFRT}{\mathcal{O}'_q (G)}
\newcommand{\OqMat}{\mathcal{O}_q (\Mat_2)}
\newcommand{\DqMat}[1]{\mathcal{D}_{q,#1} (\Mat_{2})} 
\newcommand{\DqFRT}{\mathcal{D}'_q (G)} 
\newcommand{\OqGLb}[1]{\mathcal{O}'_q (\GL_{#1})}
\newcommand{\OqSLb}[1]{\mathcal{O}'_q (\SL_{#1})}
\newcommand{\DqGLb}[1]{\mathcal{D}'_q(\GL_{#1})}
\newcommand{\DqSLb}[1]{\mathcal{D}'_q(\SL_{#1})}
\newcommand{\OqMatb}{\mathcal{O}'_q (\Mat_2)}
\newcommand{\DqMatb}[1]{\mathcal{D}'_{q,#1} (\Mat_{2})}

\newcommand{\sk}{\operatorname{Sk}}
\newcommand{\skhat}{\widehat{\operatorname{Sk}}}
\newcommand{\skmod}{\operatorname{SkMod}}
\newcommand{\skalg}{\operatorname{SkAlg}}
\newcommand{\skcat}{\operatorname{SkCat}}
\newcommand{\intskmod}{\operatorname{SkMod}^\mathrm{int}}
\newcommand{\intskalg}{\operatorname{SkAlg}^\mathrm{int}}
\newcommand{\A}{\mathscr{A}} 
\newcommand{\Ann}{\mathbb{A}\mathrm{nn}}

\newcommand{\supp}{\operatorname{Supp}}
\newcommand{\modu}{\operatorname{-Mod}}

\newcommand{\detq}{\operatorname{det}_q}
\newcommand{\numberq}[2]{\{#1\}_{#2}}

\newcommand{\ie}{i.e.\,}

\newcommand{\C}{\mathbb{C}}
\newcommand{\field}{k}
\newcommand{\Chbar}{\C\llbracket \hbar \rrbracket}
\newcommand{\Chloc}{\C(\!(\hbar)\!)}
\newcommand{\Cq}{\C[q,q^{-1}]}
\newcommand{\Cqloc}{\C(q^{\frac12})}
\newcommand{\hloc}[1]{#1^{\hbar,\mathrm{loc}}}
\newcommand{\hcmpl}[1]{#1^{\wedge}_\hbar}

\begin{document}

\title{Finiteness and holonomicity of skein modules}

\author{
	David Jordan \quad Iordanis Romaidis \\[0.5cm]
	}
\date{}

\maketitle

\begin{abstract}
We prove a generalised version of finiteness of skein modules for 3-manifolds by including boundary. We show that internal skein modules are holonomic modules over the internal skein algebra of the boundary - a property including finite generation and a Lagrangian support condition. Our approach involves defining skein transfer bimodules, topologically obtained by 2-handle attachments, and establishing $q$-analogues of statements in D-module theory to prove preservation of holonomicity. 
\end{abstract}

\setcounter{tocdepth}{2}
\tableofcontents

\section{Introduction}
This paper concerns the fine algebraic structure of skein modules of oriented 3-manifolds -- namely the properties of finite generation and holonomicity of the skein module of an oriented 3-manifolds relative to the skein algebra of its boundary.

The skein module  $\skmod_G(M)$ is a vector space assigned to an oriented 3-manifold $M$ and a reductive group $G$, which mathematically captures the space of Wilson loop operators in Chern-Simons theory or, equivalently, certain sectors of line operators constrained to a 3-dimensional Dirichlet boundary in the Kapustin-Witten twist of 4D $\mathcal{N}=4$ super Yang-Mills \cite{Witten, Kapustin-Witten}.  

Much aside from their physical origins, skein modules are interesting to mathematicians because they generalise the construction of Reshetikhin-Turaev knot invariants \cite{RT} such as the Jones polynomial to (vector-valued) invariants of knots in arbitrary 3-manifolds.  They are expected to arise via $\beta$-factorisation homology (see \cite{AFRbeta}) as a generalised homology theory for 3-manifolds, allowing non-abelian coefficients.  They are interesting geometrically because of their classical degeneration to character varieties -- moduli spaces of representations of $\pi_1(M)$, or equivalently of $G$-local systems.  More recently, they have been related (largely conjecturally) to cohomological Donaldson--Thomas invariants of character stacks \cite{GS} (see also \cite{Kaubrys,Bu+}).  Altogether, they are a fundamental object of study in the field of quantum topology.

The definition of the skein module is elementary: a $G$-skein consists of an embedded ribbon graph in $M$ with edges and vertices labelled by representations and morphisms of $G$-modules, or more precisely representations of the quantum group $U_q\mathfrak{g}$.  The $G$-skein module $\skmod_G(M)$ is the linear span of such labelled graphs, modulo local relations which encode the graphical calculus for representations of the quantum group.

\subsection{Results} 

The paper \cite{GJS} established that for reductive groups $G$, for \emph{closed} 3-manifolds, and for \emph{generic} values of the quantum parameter $A=q^{\frac12}$, the skein module $\skmod_G(M)$ is finite-dimensional.  In this paper, we establish the analogous finiteness conjecture for skein modules of 3-manifolds with arbitrary boundary, for the groups $G=\SL_2(\C),\GL_2(\C)$.  Namely, we prove:

\begin{theorem}\label{thm:Detcherry-conjecture} Let $G=\mathbb{C}^\times$, $\SL_2$, or $\GL_2$.  Suppose that $q$ is generic and that $M$ has a (possibly empty) boundary $\partial M = -\Sigma_\mathrm{in}\cup\Sigma_\mathrm{out}$.  Then $\skmod_G(M)$ is finitely generated over the algebra $\skalg_G(\Sigma_\mathrm{in})\otimes \skalg_G(\Sigma_\mathrm{out})^{op}$.
\end{theorem}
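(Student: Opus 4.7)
The plan is to establish the stronger property that $\skmod_G(M)$ is \emph{holonomic} over $\skalg_G(\Sigma_{\mathrm{in}})\otimes\skalg_G(\Sigma_{\mathrm{out}})\opp$, in the internal/categorical sense announced by the abstract; since holonomicity subsumes finite generation, this will imply the theorem. The overall strategy is to induct along a relative handle decomposition of $M$, with the main technical content being preservation of holonomicity under the skein transfer bimodules associated to $2$-handle attachments.

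First I would fix a Morse function on $M$ whose minima sit on $\partial M$, presenting $M$ as a chain
$$\partial M \times [0,1] = M_0 \subset M_1 \subset \cdots \subset M_n = M,$$
where $M_{i+1}$ arises from $M_i$ by attaching a handle. By standard handle-trading one may assume only $2$-handles appear: $1$-handles are dual to $2$-handles of the turned-upside-down decomposition, and $3$-handles only cap off $S^2$ boundary components, a situation reducible to the closed-manifold case of \cite{GJS}. To each $2$-handle attachment, glued along a thickened curve $\gamma \subset \partial M_i$, I would associate a skein transfer bimodule $T_\gamma$ over $\skalg_G(\partial M_{i+1})$ and $\skalg_G(\partial M_i)$, with the key property that
$$\skmod_G(M_{i+1}) \cong \skmod_G(M_i) \otimes_{\skalg_G(\partial M_i)} T_\gamma.$$
The base case $M_0 = \partial M \times [0,1]$ is automatic: its skein module is the regular bimodule, which is tautologically holonomic.

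The inductive step -- the heart of the argument -- is the claim that tensoring a holonomic module by $T_\gamma$ yields a holonomic module over the new boundary algebra. I would split this into three sub-tasks: (i) formulating a workable notion of internal holonomicity for skein modules, together with a $q$-analogue of the Bernstein inequality and the Lagrangian support criterion flagged in the abstract; (ii) verifying that each $T_\gamma$ is itself holonomic as a bimodule, by direct analysis of the solid-torus filling model and the skein algebra of the annular neighbourhood of $\gamma$; and (iii) proving a $q$-analogue of the D-module theorem that tensor product with a holonomic bimodule preserves holonomicity.

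I expect step (iii) to be the principal obstacle. The classical D-module proof rests on good filtrations and Noetherianness of Weyl-type algebras, neither of which are immediately at hand for internal skein algebras, and a Tor-spectral-sequence/grade-estimate will need to be reformulated in the categorified setting. The restriction to $G = \SL_2, \GL_2$ (the abelian case $G = \mathbb{C}^\times$ being essentially trivial) will enter precisely here, since it grants explicit PBW-type presentations and quantum-dimension control for the skein algebras of surfaces and for the transfer bimodules, allowing the Lagrangian bound to be checked by hand. Once the inductive step is secured, iterating along the handle chain delivers holonomicity, and hence finite generation, of $\skmod_G(M)$ over $\skalg_G(\Sigma_{\mathrm{in}}) \otimes \skalg_G(\Sigma_{\mathrm{out}})\opp$.
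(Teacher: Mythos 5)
Your overall architecture agrees with the paper's: reduce to a holonomicity statement for internal skein modules, decompose $M$ into elementary pieces, associate skein transfer bimodules to $2$-handle attachments, and establish that these transfers preserve holonomicity. The genuine gap is in your decomposition. You claim that "by standard handle-trading one may assume only $2$-handles appear" in a handle chain starting from $M_0 = \partial M \times [0,1]$; this is false. Turning the decomposition upside-down converts $1$-handles into $2$-handles, but it simultaneously converts $2$-handles into $1$-handles, so no single orientation of a relative handle chain avoids both. Starting from the regular bimodule $\skmod(\partial M\times[0,1])$ and building upward, the handles that increase genus (needed whenever the Heegaard genus of $M$ exceeds that of $\partial M$, and needed to connect components) are $1$-handles, whose transfer bimodules implement \emph{direct} images, not inverse images. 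The paper's preservation theorem (Theorem~\ref{thm:preservation}) is proved only for the inverse-image direction -- the $q$-analogue of restricting a $\D$-module along a closed immersion -- and the direct-image direction is neither stated nor proved, so your induction cannot close as written.

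The paper sidesteps this by using a generalised Heegaard splitting $M \cong H_\Sigma^\gamma \circ C_{\Sigma,\partial M}$ rather than a relative handle chain. The genus-$g$ handlebody $H_\Sigma$ supplies the holonomic base case (its internal skein module is the cyclic module, manifestly Lagrangian), and the compression body $C_{\Sigma,\partial M}$ factors (Proposition~\ref{prop:compression}) into a sequence of single $2$-handle attachments along standard separating or non-separating curves, interleaved with mapping cylinders and $3$-handles. All transfer functors that appear therefore go in the inverse-image direction, and the base case sits at the opposite end of the chain from where you placed it; replacing your base case and decomposition accordingly would make your induction sound. Two further items: the preservation proof in the paper is not the abstract ``tensor with a holonomic kernel preserves holonomicity'' you anticipate in (ii)+(iii), but a concrete $q$-Bernstein argument -- a Koszul resolution of $(A-I)\backslash\A_{g,r}$, Ore localisations onto a four-chart affine cover of $G\setminus\{e\}$, a $q$-analogue of Kashiwara's theorem, and a long exact sequence (Sections~\ref{subsec:GL2-Koszul}--\ref{subsec:les}); and the final passage from internal holonomicity to the ordinary finite generation asserted in the theorem (evaluating at the unit, using fully-faithfulness of the forgetful functor on strongly equivariant modules at generic $q$) still needs to be supplied, as in the corollary closing Section~\ref{sec:MainThm}.
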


This statement has appeared as Conjecture 3.1 of \cite{Det19}. For $G=\SL_2$, Theorem~\ref{thm:Detcherry-conjecture} was known for some families of 3-manifolds \cite{LT,Mar,AF,KaruoWang}, see \cite{Belletti-Detcherry} for more extensive discussion of these examples. Theorem \ref{thm:Detcherry-conjecture} is best understood as an application of our main result which is much stronger and more precise:

\begin{theorem}[Proved as Theorem \ref{thm:main-thm-skein}]\label{thm:Detcherry-conjecture-holonomic} Let $G=\mathbb{C}^\times$, $\SL_2$, or $\GL_2$.  Suppose that $q$ is generic, and that $M$ has a boundary $\partial M = -\Sigma_\mathrm{in}\cup\Sigma_\mathrm{out}$.  Then $\intskmod_G(M^*)$ is finitely generated and holonomic as a $\intskalg_G(\Sigma^*_\mathrm{in})$-$\intskalg_G(\Sigma^*_\mathrm{out})$-bimodule.
\end{theorem}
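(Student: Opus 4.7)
The plan is to prove the theorem by induction on a handle decomposition of $M$ relative to $\Sigma_\mathrm{in}$, expressing $\intskmod_G(M^*)$ as an iterated tensor product of \emph{transfer bimodules}, each associated with an elementary cobordism, and showing that tensoring with such a bimodule preserves finite generation and holonomicity over the relevant internal skein algebras.

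First, I would set up the framework of $q$-holonomic modules over the internal skein algebras $\intskalg_G(\Sigma^*)$ of surfaces. For the groups $G$ in the theorem, these algebras are expected to be noetherian of finite Gelfand--Kirillov dimension equal to that of the corresponding character variety, and I would in addition verify an Auslander--Gorenstein-type regularity so that the grade filtration is well-behaved. In this setting, a finitely generated bimodule is \emph{holonomic} if its $\GKdim$ equals half that of a free bimodule — the $q$-analogue of Lagrangian support for D-modules on the symplectic character variety. The base case of the induction is the cylinder $M = \Sigma_\mathrm{in} \times [0,1]$, whose internal skein module is the regular bimodule over $\intskalg_G(\Sigma_\mathrm{in}^*)$ and is visibly holonomic.

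Second, using a relative Morse function $f\colon M \to [0,1]$ with $f^{-1}(0) = \Sigma_\mathrm{in}$ and $f^{-1}(1) = \Sigma_\mathrm{out}$, I would decompose $M$ into elementary cobordisms $W_i$, each realising a single 1- or 2-handle attachment. For each $W_i$ from $\Sigma$ to $\Sigma'$, I would define the transfer bimodule $T_{W_i}$ over $\intskalg_G(\Sigma^*)$-$\intskalg_G(\Sigma'^*)$ topologically as the internal skein module of $W_i$, and prove a gluing formula identifying the internal skein module of a composite cobordism with the relative tensor product of the constituent transfer bimodules. This reduces Theorem \ref{thm:Detcherry-conjecture-holonomic} to showing that tensoring with each $T_{W_i}$ preserves finite generation and holonomicity.

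Third, and this is the principal technical obstacle, I would establish preservation of holonomicity for each handle type. For 1-handles, $T_W$ is essentially an induction bimodule along an open embedding of surfaces, and holonomicity should follow more or less formally from the factorisation-homology/coend description of the internal skein algebras. The serious difficulty is the 2-handle case: here $T_W$ is a cyclic bimodule implementing a Dehn filling, whose classical limit is supported on the Lagrangian subvariety cut out by vanishing of the traces of the attaching curve in the character variety. Promoting this classical Lagrangian support bound to a genuine holonomicity statement at generic $q$ requires a $q$-analogue of the D-module fact that pushforward along a closed embedding preserves holonomicity; I would attack this via a compatible filtration on $T_W$ whose associated graded recovers the classical picture, combined with the Auslander--Gorenstein property to transport the grade estimate across the quantisation and bound $\GKdim(N \otimes T_W)$ for holonomic $N$.
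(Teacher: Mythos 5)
Your overall architecture matches the paper's: decompose $M$ into elementary pieces (the paper uses a generalised Heegaard splitting into a handlebody and a compression body, then Proposition~\ref{prop:compression} to reduce to single 2-handle attachments along standard curves, twisted by mapping classes), realise each piece as a transfer bimodule given by its internal skein module, use the gluing formula (Corollary~\ref{cor:int-bimod-gluing}) to write $\intskmod_G(M^*)$ as an iterated relative tensor product, and reduce everything to the statement that the 2-handle transfer functor preserves holonomicity. Up to that point your plan is sound, and your base case and the treatment of 1-handles (which the paper absorbs into the surface-level handle-and-comb presentation of $\intskalg$ rather than into the 3-dimensional decomposition) are fine.

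The genuine gap is in your third step, which is exactly where all the work lies. First, a directional point: the 2-handle transfer bimodule is $(A-I)\backslash\A_{g,r}$ acted on from the right by $\A_{g,r}$, so the transfer functor is a $q$-analogue of the \emph{inverse image} $Li^*$ along a closed immersion $G^{k-1}\hookrightarrow G^{k}$, not of the pushforward $i_*$; the statement that pushforward along a closed embedding preserves holonomicity is the easy (Kashiwara) direction and is not what you need. Second, and more seriously, preservation of holonomicity under inverse image is not a formal consequence of a filtration whose associated graded recovers the classical Lagrangian picture, nor of Auslander--Gorenstein regularity: even classically this statement requires Bernstein's inductive argument, which splits a holonomic module into the part supported on the closed subvariety (handled by Kashiwara's theorem) and the part that extends to the open complement (handled by showing localisation preserves holonomicity), glued by the exact triangle $i_*Li^*\to \mathrm{id}\to Rj_*j^*$. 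In the quantum setting the paper has to build all of this by hand: an affine cover of $G\setminus\{e\}$ quantised by four explicit Ore sets $S_1,\dots,S_4$ in $\A_{g,r}$, with case-by-case verification of the Ore conditions and explicit filtrations giving $\GKdim(S_i^{-1}M)\le\GKdim(M)$ (Lemmata~\ref{lem:s1}--\ref{lem:s4}); a quantum Kashiwara theorem (Proposition~\ref{prop:kernel}, resting on the torsion-freeness and direct-sum Lemmata~\ref{lem:ns-direct-sum}, \ref{lem:s-direct-sum}); a Koszul resolution of $(A-I)\backslash\A_{g,r}$ to compute $Li^*$; and the long exact sequence of Section~\ref{subsec:les} tying these together. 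Your proposal would need to supply substitutes for all of these; as written, the ``compatible filtration plus grade estimate'' step would fail to bound $\GKdim$ of the (derived) tensor product with the transfer bimodule, which is precisely why the paper's treatment is restricted to $G=\C^\times,\SL_2,\GL_2$, where the required Ore sets and estimates can be exhibited explicitly.
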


Two notions in the above theorem require unpacking -- that of \emph{internal} skein modules $\intskmod_G(M^*)$ and skein algebras $\intskalg_G(\Sigma^*)$, and the general property of a module over a deformation quantization algebra being \emph{holonomic}.  We now explain these notions in turn.

\paragraph{Internal skein modules} Recall that even when $M$ has boundary, $\skmod_G(M)$ is typically defined in such a way that skeins do not approach the boundary.  By contrast, the \emph{internal skein module} $\intskmod_G(M^*)$ of a 3-manifold -- as well as the closely related internal skein algebra $\intskalg_G(\Sigma^*)$ of a surface $\Sigma$ -- is defined so that skeins can exit $M$ (resp. $\Sigma\times I$) through a prescribed collection of disks (say $k\geq 0$ many) contained in the boundary (called ``gates"). For simplicity we will abbreviate by $M^*$ and $\Sigma^*$, respectively, the most typical configuration in which we have exactly one gate in each boundary component of $M$, respectively, connected component of $\Sigma$.

This leads to the definition of the internal skein module $\intskmod_G(M^*)$ and similarly the internal skein algebra $\intskalg_G(\Sigma^*)$ as an object of $\Rep_q(G)^{\boxtimes k}$, whose $(V_1\boxtimes \cdots \boxtimes V_k)$-typical component measures skeins ending at the prescribed disks and coloured with $V_1.\ldots, V_k$, respectively.  Internal skein modules/algebras are functorial avatars of so-called ``stated" skein modules/algebras introduced independently in \cite{Costantino-Le}; the two notions were shown to be isomorphic in \cite{Haioun}.  Presentations of internal skein algebras in the presence of a single gate are computed in \cite{BZBJ18a,Cooke-thesis}; while internal skein algebras with multiple gates were considered in \cite{Haioun}, no presentation was given there. In Section~\ref{sec:int-sk}, we employ the monadic techniques from \cite{BZBJ18a}, to give an explicit presentation of the internal skein algebra of a surface with multiple gates.

\paragraph{Holonomicity} Recall that whenever some non-commutative algebra $A_q$ flatly degenerates to a commutative algebra at $q=1$, the degeneration inherits a Poisson bracket, via the formula $\{f,g\} = \lim_{q\to 1} \frac{\widetilde{f}\widetilde{g}-\widetilde{g}\widetilde{f}}{q-1}$, where $\widetilde{f}$, $\widetilde{g}$ are arbitrary choices of lifts to $A_q$ of some functions $f,g\in A_{q=1}$.  Let us suppose now that $\mathscr{X} = \mathrm{Spec}(A_{q=1})$ is smooth, and that the induced Poisson bracket is in fact symplectic.  Given a finitely generated non-zero module $M_q$ over $A_q$, we can form its specialisation at $q=1$.  The set-theoretic support of the underlying coherent sheaf on $\mathscr{X}$ is called the singular support of $M_q$. The specialisation -- and in particular the singular support -- of $M_q$ depend on the choice of generators; however the dimension of the singular support is independent of the choice of generators.  This dimension is bounded from below by half the dimension of $\mathscr{X}$, and we say that $M_q$ is holonomic when this bound is obtained for some choice of generators.

Hence, not only is $\intskmod_G(M)$ finitely generated relative to its boundary, it is as small as possible in a precise sense: it is ``maximally over-determined" (a synonym for holonomic) as a module over the internal skein algebras of its boundary, consequently it has the smallest possible homological dimension and the smallest possible Gelfand-Kirillov dimension (see Section \ref{subsec:def-quant-hol-mods}).  
We return to potential applications of holonomicity in Section \ref{subsec:future}.

\paragraph{Related work}
While writing up this work, we learned through private correspondence with Renaud Detcherry of the results in \cite{Belletti-Detcherry}, which establish a different, complementary finiteness property for $G=\SL_2$ -- also conjectured by Detcherry \cite{Det19} -- namely finite-dimensionality over a field of rational functions of a maximal commutative subalgebra of the skein algebra of the boundary surface of a closed 3-manifold.  For manifolds with boundary, the two finiteness results are complementary and neither appears to imply the other.

Like \cite{Belletti-Detcherry}, our results establish that the peripheral ideal of any link is non-empty (as conjectured in \cite{Frohman-Gelca-Lofaro}, and give a new proof of the holonomicity of the coloured Jones polynomial -- famously proved in \cite{Garoufalidis-Le}.  These are both special cases where $M$ is a knot complement, and our main results generalise and prove these statements to oriented 3-manifolds with arbitrary boundary.

\subsection{Methodology}
Let us now outline our methodology, which differs significantly in key respects from the approach of \cite{GJS}.  These differences explain both why our results are much stronger than in \cite{GJS}, but also why (for the time being) they are limited to $\C^\times, \SL_2$, and $\GL_2$.  Essentially, in order to deploy the method in this paper one needs to understand the structure of internal skein algebras in much finer detail.

Whereas \cite{GJS} relied on abstract deformation theory arguments \`a la Kashiwara and Schapira \cite{KS} to ultimately reduce to a question of $\D(G)$-modules and the theory of ordinary differential operators, in the present work we instead follow the multiplicative framework of $q$-difference operators, or $\Dq$-modules, \`a la Sabbah \cite{Sabbah}.

\paragraph{Compression bodies and Heegaard splittings}
The first key ingredient in our construction is topological in nature, and classical:  whereas a closed connected 3-manifold always admits a Heegaard splitting into the union of two handlebodies of genus $g$ glued along their common boundary, an arbitrary compact connected 3-manifold with boundary admits a generalised Heegaard splitting into a genus $g$ handlebody and a compression body containing the boundary (see \cite{Hempel} for a comprehensive introduction).  A compression body is a 3-manifold obtained from a thickened surface $\Sigma_g\times I$ by adding in 2-handles to collapse cycles in $\Sigma_g$, and then 3-handles to cap off any resulting $S^2$ boundaries.  Alternatively, a compression body can be obtained from a genus $g$ handlebody by removing embedded handlebodies of smaller genus.  Together with twisting by the mapping class group, the 2-handles to be glued come in two types: they are either separating -- each such gluing adds a connected component to the boundary -- or they may be non-separating, in which case they may be assumed to collapse a standard $A$-cycle on the surface (see Section \ref{sec:transfer bimodules}).

\paragraph{Internal skein transfer bimodules}
Let $\Sigma^*_{g,r}$ denote the oriented surface of genus $g$, with $r$ boundary components, each of which is equipped with a gate.  In Section \ref{sec:int-sk} we give an isomorphism
\[\intskalg_G(\Sigma^*_{g,r}) \cong \D_q(G)^{\totimes g}\widetilde{\otimes} \D'_q(G)^{\totimes(r-1)}.\]
Here, $\D_q(G)$ and $\D'_q(G)$ denote two closely related algebras of $q$-difference operators on $G$ (see Section \ref{sec:int-sk} for detailed definitions), and $\widetilde{\otimes}$ and denotes a twisted tensor product of algebras determined from a ``gluing pattern" for $\Sigma_{g,r}^*$ (see Section \ref{sec:int-sk}).  With this presentation the algebra $\intskalg_G(\Sigma^*_{g,r})$ quantizes the Fock-Rosly Poisson structure on $G^{2(g+r-1)}$ \cite{Fock-Rosly}, and is a mild generalisation of the Alekseev-Grosse-Schomerus algebras \cite{AGS}.

The algebras $\D_q(G)$ and $\D'_q(G)$ are each themselves smash products,
\[\D_q(G) = \OqG\sharp \OqG,\qquad \D'_q(G) = \OqG\sharp \OqFRT,\]
where $\OqG$ and $\Oq'(G)$ denote the reflection equation (RE) algebra and the Fadeev-Reshetikhin-Takhtajan (FRT) algebras, respectively.  Accordingly, each has a canonical induced module,
\[
\OqG = (A-I)\backslash\DqG ~ \circlearrowleft~\DqG, \qquad \Oq'(G) = (A-I)\backslash\D'_q(G)~\circlearrowleft ~\Dq'(G),
\]
where in both cases we abbreviate by $(A-I)$ the right ideal obtained as the kernel of the counit homomorphism $\epsilon:\OqG\to\field$.

When attaching a 2-handle along a standard separating or non-separating cycle $\alpha$ in $\Sigma_{g,r}$, we are essentially setting the corresponding $G$-factor to be trivial.  Topologically, this is implemented by the internal skein transfer bimodule of the compression body $C_\alpha$ obtained from $\Sigma_g\times [0,1]$ by attaching a 2-handle along $\alpha\times \{0\}$.

In the non-separating case, this corresponds to the the $\intskalg_{q,G}(\Sigma^{\ast}_{g-1,r})\mbox{-}\intskalg_{q,G}(\Sigma_{g,r}^{\ast})$-bimodule
\[\intskmod_{q,G}(C_\alpha) \cong \OqG \totimes \DqG^{\totimes g-1} \totimes \Dq'(G)^{\totimes r-1}~.\]

In the separating case, we instead obtain the $\intskalg_{q,G}(\Sigma_{g_1,r_1}^\ast\cup \Sigma_{g_2,r_2}^\ast)\mbox{-}\intskalg_{q,G}(\Sigma_{g,r}^{\ast})$-bimodule
\[\intskmod_{q,G}(C_\alpha) \cong \D_q(G)^{\totimes g_1}\totimes \D'_q(G)^{\totimes r_1-1}\totimes \OqFRT \totimes  \D_q(G)^{\totimes g_2}\totimes \D'_q(G)^{\totimes r_2-1} ~,\]
where $g_1,g_2$ and $r_1,r_2$, respectively are the genus and number of boundary components in the two components separated by $\alpha$, hence satisfying $g=k_1+k_2$, $r = r_1 + r_2$. 
The details of this construction and bimodule action will spelled out in Section~\ref{sec:transfer bimodules}.

\paragraph{Direct and inverse image for skein modules}

Perhaps the most significant insight of this paper is that the internal skein transfer bimodule is a precise $q$-analogue of the transfer bimodule which features in the theory of algebraic differential equations, \ie the study of modules over the algebra $\D$ of polynomial differential operators on a smooth algebraic variety.

In the classical theory of $\D$-modules, a closed immersion $i: Z\to X$, and its complementary open embedding $j:X\setminus Z\to X$, induce functors
\[
i_*:\D(Z)\modu \leftrightarrows \D(X)\modu:i^\ast,\qquad
j^*:\D(X)\modu\leftrightarrows \D(X \setminus Z)\modu:j_\ast~.
\]
For $M\in \D(Y)$ we have an exact triangle,
\begin{equation}\label{eqn:exact-triangle}
i_\ast Li^\ast M[n] \rightarrow M \rightarrow Rj_\ast j^\ast M \xrightarrow{+1}
\end{equation}
where $n$ is the codimension of $Z$ in $X$ appearing as a shift. 
Moreover, the functors $i_\ast$ and $i^\ast$ is computed via an explicit ``transfer bimodule", $\D_{X\to Y}$.

The algebra $\intskalg(\Sigma^*_{g,r})$ is a $q$-analogue of the algebra $\D(G^{g+r-1})$ of polynomial differential operators on the smooth algebraic variety $G^{g+r-1}$; in fact upon appropriate completion, $\intskalg(\Sigma^*_{g,r})$ degenerates to $\D(G^{g+r-1})$.  This fact was observed and used heavily in \cite{GJS} when $r=1$, however the relation to differential operators for $r\geq 2$ is novel to this paper, and plays an essential role for manifolds with multiple boundary components.

Using this analogy, we define by hand the functors $i^*$,$i_*$,$j^*$,$j_*$, via internal skein transfer bimodules for handle attaching.  Hence, viewed through the lens of our analogy with the theory of $\D$-modules, we obtain an algebraic description of the internal skein module of a compression body as a composition of ``$q$-transfer bimodules for closed immersions" $G^k\to G^g$, for $k<g$.

\paragraph{Preservation of holonomicity}  The final key ingredient in our construction is analytic in nature, and perhaps the most important result in the field of geometric representation theory: tensoring with transfer bimodules preserves holonomicity.  The proof of this classical result most relevant to the current paper is due to Bernstein \cite{Bernstein} (see also \cite{Bjork}).  He considers the algebra $\D(\mathbb{A}^n)$, governing partial differential equations in $n$ variables.  He performs an induction on $n$, starting from the most fundamental fact in the theory of differential equations, namely that ODE's in a single variable admit a finite-dimensional space of solutions, and appealing to the exact triangle to perform the inductive step.

Building on Bernstein's technique, Sabbah established a number of fundamental parallel results for the parallel but more complicated algebra $\Dq((\mathbb{G}_m)^n)$, of $q$-difference operators on the affine torus of rank $n$.  Namely, he constructed the functors $(i_*,i^\ast)$, their description via transfer bimodules, the functors $(j_\ast,j^*)$, and an analogue of the exact triangle, all in the $q$-setting.  Using these tools he showed that transfer bimodules for closed immersions between tori preserve holonomicity.  An added difficulty he overcame is that in defining $j^*$, one cannot simply delete the image $i(Z)$, but one must also delete it's arbitrary $q$-shift; otherwise the Ore condition for localisation is not satisfied.

We adopt a similar approach to Bernstein and Sabbah, to show that the $q$-transfer bimodules associated to 2-handle attachments preserve holonomicity, by treating the separating and non-separating cases in parallel (see Theorem \ \ref{thm:preservation}).  Namely we establish an analogue of the exact triangle \eqref{eqn:exact-triangle} for internal skein transfer bimodules, and we also prove an analogue of Kashiwara's theorem for these, see Proposition \ref{prop:kernel}.

In fact, the case $G=\C^\times$ of Theorem \ref{thm:Detcherry-conjecture-holonomic} follows from Sabbah's results which apply directly there. Several challenges arise however when consider non-abelian groups $\SL_2(\C)$ and $\GL_2(\C)$.  Whereas Sabbah's basic building block is the algebra $\D_q(\C^\times)$ of $q$-difference operators on the multiplicative group $\C^\times$, our basic building block is instead the algebra $\DqG$ of $q$-difference operators on the reductive groups $\GL_2$ and $\SL_2$. Consequently, whereas $\D_q(\C^\times)$ has a commutative base $\C^\times$ accessible via the Lagrangian subalgebra $\O(\C^\times)\subset \Dq(\C^\times)$ and amenable to geometric definitions, by contrast our ``base" subalgebra $\OqG$ is itself a quantum object, and is itself non-commutative as soon as $G$ is non-abelian reductive.

A further challenge we overcome is that our induction step via internal skein transfer bimodules passes from $G^{k-1}$ to $G^{k}$ via a closed immersion $G^{k-1}\to G^{k}$ (more precisely our setup is a deformation quantization of that). Even classically, one has the complication that the complementary embedding $j:(G^k\setminus G^{k-1})\to G^k$ is not affine. In order to construct the necessary exact triangle to perform the induction step, we must therefore construct a non-commutative affine open chart on $\OqG$, consisting of four affine opens whose union covers the complement $G\setminus\{e\}$ to the identity element $e\in G$. We establish the Ore property and finite generation of each localisation individually, and finally we apply an analogue of the exact triangle and of Kashiwara's theorem to conclude that the transfer bimodule preserves holonomicity.

\subsection{Further directions}\label{subsec:future}

Our results open up several avenues for future research and many new questions, some of which we pose here.

\paragraph{Extension to reductive groups}
While Theorem \ref{thm:Detcherry-conjecture-holonomic} applies only to the cases $G=\C^\times$,$ \GL_2$ and $\SL_2$, there is no essential impediment to applying the strategy here more generally, and indeed many of the theorems in the paper are for a general reductive group $G$. The main remaining challenge in our approach is to construct the affine cover of $G\setminus e$ algebraically in $\OqG$, and undertake to prove the finite generation of each affine chart, as is done in Section~\ref{subsec:Ore-Loc}.

This is well within reach for all groups of Type $A$, and perhaps for matrix groups more generally using the direct approach in the paper. An alternative approach suggested by Pavel Etingof would be to appeal to the techniques of \cite{Etingof} involving $p$-supports for cluster varieties, which should encompass character varieties and character stacks. This is a direction we intend to pursue in future work.

\paragraph{Skein modules with defects}
Theorem \ref{thm:Detcherry-conjecture-holonomic} has the following immediate consequence.
\begin{corollary}
Let $G=\GL_2$, $\SL_2$ or $\C^\times$, suppose that $\partial M=T^2$, and let $Y$ be an arbitrary holonomic $\DqG$-module  Then the relative tensor product,
\[ \intskmod(M)\otimes_{\DqG} Y\]
is finite-dimensional.
\end{corollary}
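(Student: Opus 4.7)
The argument has two pieces. First, I would identify the relevant boundary skein algebra: with $\partial M = T^2$ equipped with a single gate, the presentation of Section~\ref{sec:int-sk} applied at $g=1$, $r=1$ yields
\[
\intskalg_G((T^2)^*) \;\cong\; \DqG\,\widetilde{\otimes}\,\D'_q(G)^{\totimes 0} \;\cong\; \DqG.
\]
Setting $\Sigma_{\mathrm{in}} = \emptyset$ and $\Sigma_{\mathrm{out}} = T^2$ in Theorem~\ref{thm:Detcherry-conjecture-holonomic} then yields that $\intskmod(M)$ is finitely generated and holonomic as a right $\DqG$-module.

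Given this, the corollary reduces to the following general principle: the tensor product over $\DqG$ of a finitely generated holonomic right module with a holonomic left module is finite-dimensional over $\C$. To motivate the principle, one would equip both $\intskmod(M)$ and $Y$ with good filtrations relative to a filtration on $\DqG$ whose associated graded $\mathrm{gr}\,\DqG$ is the coordinate ring of a smooth symplectic variety $\mathscr{X}$ of dimension $2n = \GKdim(\DqG)$. Holonomicity forces the singular supports of both modules to be Lagrangian of dimension $n$ inside $\mathscr{X}$, and the tensor product inherits a filtration whose associated graded $\mathrm{gr}(\intskmod(M))\otimes_{\mathrm{gr}\,\DqG}\mathrm{gr}(Y)$ is supported on the intersection of these two Lagrangians, whose expected dimension is $n + n - 2n = 0$.

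The main obstacle is to make the Lagrangian intersection count rigorous, without assuming transversality of the supports. The cleanest route is to appeal to a Bernstein-type inequality
\[
\GKdim\bigl(\intskmod(M) \otimes_{\DqG} Y\bigr) \;\le\; \GKdim(\intskmod(M)) + \GKdim(Y) - \GKdim(\DqG) \;=\; 0,
\]
valid for Auslander-regular filtered algebras with commutative Cohen--Macaulay associated graded, forcing $\intskmod(M)\otimes_{\DqG}Y$ to have Gelfand--Kirillov dimension zero and hence to be finite-dimensional over $\C$. An alternative route is to extend the $q$-$\D$-module formalism developed in the body of the paper and realise $\intskmod(M) \otimes_{\DqG} Y$ as the pushforward to a point of the holonomic external tensor product $\intskmod(M)\boxtimes Y$, invoking preservation of holonomicity under the relevant operations together with the fact that holonomic modules on a point are precisely finite-dimensional vector spaces.
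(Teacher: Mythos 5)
Your first step is exactly the paper's intended reduction: the paper states this corollary with no separate argument, as an ``immediate consequence'' of Theorem \ref{thm:Detcherry-conjecture-holonomic}, using precisely the identification $\intskalg_G((T^2)^*)\cong\A_{1,1}\cong\DqG$ and the resulting finite generation and holonomicity of $\intskmod(M)$ over $\DqG$. So the real content of your proposal is the second step, the ``general principle'' that a holonomic right module tensored over $\DqG$ with a holonomic left module is finite-dimensional, and this is where your justification does not hold up as written.

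The Bernstein-type inequality $\GKdim(M\otimes_A N)\le \GKdim(M)+\GKdim(N)-\GKdim(A)$ is \emph{not} valid ``for Auslander-regular filtered algebras with commutative Cohen--Macaulay associated graded'': the commutative polynomial ring $A=\C[x,y]$ satisfies all of these hypotheses, yet $M=N=A/(y)$ gives $\GKdim(M\otimes_A N)=1>1+1-2$. The inequality is a genuinely symplectic/noncommutative phenomenon, and for holonomic inputs it is essentially equivalent to the theorem you are trying to prove. Your filtration heuristic also cannot be repaired by a better dimension count: the associated graded of the induced filtration on the tensor product is only a quotient of $\mathrm{gr}(\intskmod(M))\otimes_{\mathrm{gr}\DqG}\mathrm{gr}(Y)$, and when the two Lagrangian supports coincide (e.g.\ $M$ a solid torus, so $\intskmod(M)\cong\OqG$, and $Y=\OqG$) the support bound only gives dimension at most $\dim G$, not $0$; the finiteness in that case is the $q$-analogue of finiteness of de Rham cohomology and does not follow from homological properties of $\DqG$ alone. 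Your alternative route --- external tensor product followed by pushforward to a point --- presupposes a direct-image formalism for $\DqG$-modules with preservation of holonomicity, which exists for $G=\C^\times$ (Sabbah, covering that case of the corollary) but is established neither in this paper nor in the literature for $\GL_2,\SL_2$: the paper only proves preservation for the inverse-image transfer functors of 2-handle attachments, which tensor over the \emph{larger} internal skein algebra, whereas the pairing $\intskmod(M)\otimes_{\DqG}Y$ is over the boundary algebra and is not one of those operations. In fairness, the paper itself supplies no proof of this step and treats the pairing finiteness as standard deformation-quantization lore (in the spirit of Kashiwara--Schapira and \cite{GJS}), so your architecture is the intended one; but as a proof, the key finiteness assertion still needs an argument --- e.g.\ a KS-type finiteness theorem (which requires controlling the intersection of supports) or a reduction to $\D(G)$-modules as in \cite{GJS} --- and neither of the justifications you give supplies it.
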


Natural examples of $Y$ include $\OqG$ and its twists by $\SL_2(\mathbb{Z})$ -- taking these will just recover the finite-dimensionality of the resulting surgered 3-manifold.  However, we may obtain other examples of holonomic modules $Y$, for instance coming from quantum symmetric pairs, and from parabolic induction and restriction: in this way we obtain integer-valued invariants (the dimension) of a closed 3-manifold, marked with a knot or link inside it. In this way, one can investigate a topological Langlands duality for skein modules with such line defects (see \cite{BZSV} and \cite{JLanglands}). We intend to return to the study of such examples in future work with Eric Chen.

\paragraph{Derived skein modules}

The reader will have noticed the appearance of exact triangles in the methodology section.  In fact our approach establishes not only holonomicity of skein modules for 3-manifolds with boundaries, it points towards a derived enhancement of this statement.  Namely, taking everywhere the left-derived tensor product with $q$-transfer bimodules gives a candidate for the derived internal skein module of the compression body, and hence (by again taking derived tensor products with the internal skein algebra of the Heegaard splitting) a candidate for the derived internal skein module of an arbitrary 3-manifold with boundary.  Our proof of finiteness holds verbatim to establish the finite-dimensionality of derived skein modules constructed this way in all homological degrees.

\paragraph{Extension to $q$ not a root of unity}
In contrast with skein algebras of surfaces (which are free in the quantum parameter $q$), skein modules of 3-manifolds have plentiful $q$-torsion.

The methods of \cite{GJS} following \cite{KS} only allowed us to access $q$-torsion at $q=1$, however our approach in this paper following Sabbah \cite{Sabbah} has opened the possibility of specialising $q$ to arbitrary complex values.

\begin{conjecture}\label{conj:not-a-root-of-one}
The assertions of Theorems \ref{thm:Detcherry-conjecture} and \ref{thm:Detcherry-conjecture-holonomic} hold whenever $q\in\C^\times$ is not a root of unity.
\end{conjecture}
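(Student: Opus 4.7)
The plan is to reprove Theorem~\ref{thm:Detcherry-conjecture-holonomic} pointwise, specialising to each specific $q_0\in\C^\times$ that is not a root of unity, by carefully tracking the $q$-integral structure underlying the generic proof and verifying that every step transports upon specialisation. All algebras in play -- $\DqG$, $\Dq'(G)$, $\OqG$, $\OqFRT$, the internal skein algebras $\intskalg_G(\Sigma^*)$, and the transfer bimodules $\intskmod_G(C_\alpha)$ -- admit flat $\C[q^{\pm 1/2}]$-integral forms coming from their generator-and-relation presentations. The category $\Rep_q(G)$ of type-$1$ $U_q\mathfrak{g}$-modules remains semisimple at any such $q_0$, which is the essential input for the monadic construction of $\intskalg_G(\Sigma^*)$ in Section~\ref{sec:int-sk} and for the smash product decompositions of $\DqG$ and $\Dq'(G)$. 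Consequently the algebraic backbone of the paper transfers verbatim to $q=q_0$.

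On the analytic side, Sabbah's machinery for $q$-difference equations on tori requires only that the multiplicative subgroup of $\C^\times$ generated by the shift parameter be infinite, which is exactly the non-root-of-unity hypothesis; in particular the abelian case $G=\C^\times$ of the conjecture follows essentially directly from \cite{Sabbah}. For the non-abelian cases, the affine cover of $G\setminus\{e\}$ in $\OqG$ constructed in Section~\ref{subsec:Ore-Loc}, the Ore property of each chart, the analogue of Kashiwara's theorem (Proposition~\ref{prop:kernel}), and the exact triangle relating the functors $(i_*,i^*)$ and $(j_*,j^*)$ are all established through explicit PBW-style computations in the integral form. These arguments only depend on the value of $q$ through the invertibility of a finite list of $q$-integers, which is guaranteed away from roots of unity, so the preservation-of-holonomicity induction of Theorem~\ref{thm:preservation} can be re-run at $q_0$.

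The main obstacle I anticipate is verifying that finite generation of each of the Ore localisations of $\OqG$ in the affine cover survives specialisation: flatness of the unlocalised algebras does not by itself imply that a finite $\C[q^{\pm 1/2}]$-generating set of a localisation remains a generating set after inverting a denominator and specialising to $q=q_0$, because Ore localisation and base change do not commute in general. One concrete strategy is to exhibit for each chart an explicit finite generating set with $\C[q^{\pm 1/2}]$-coefficients and to verify by direct computation -- feasible for $\SL_2$ and $\GL_2$ given the small rank of these algebras -- that no accidental relations or denominators appear at any specific non-root-of-unity $q_0$. A more conceptual route, already suggested in Section~\ref{subsec:future}, would proceed through Etingof's $p$-support framework \cite{Etingof} for cluster varieties, providing uniform control across all non-root-of-unity $q$ simultaneously and bypassing the specialisation-localisation obstruction altogether.
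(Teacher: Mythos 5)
The statement you are asked about is not proved in the paper at all: it is stated as Conjecture~\ref{conj:not-a-root-of-one}, and the authors explicitly isolate the missing ingredient, namely a Bernstein-type inequality for $\DqG$ at non-root-of-unity $q$ (that every simple strongly equivariant $\DqG$-module has Gelfand--Kirillov dimension at least $\dim G$). Your proposal does not engage with this obstruction, and this is where it genuinely breaks down. The entire notion of holonomicity used in the paper (Definition~\ref{def:holonomic}, Propositions~\ref{prop:Gabber} and~\ref{prop:DQ-GKdims}) is formulated in the deformation-quantization setting over $\Chbar$ or $\Cqloc$: one chooses a lattice, reduces mod $\hbar$, and uses Gabber's theorem (via \cite{KS}) that the singular support is coisotropic to obtain the lower bound $\GKdim(M)\geq \tfrac12\dim X$. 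It is this lower bound that makes ``holonomic'' mean ``of minimal dimension'' and that closes the dimension counts in Theorem~\ref{thm:preservation} and Proposition~\ref{prop:kernel}. At a numerical specialisation $q=q_0\in\C^\times$ there is no formal parameter, no lattice, and no associated graded Poisson variety, so the coisotropicity argument is simply unavailable; the lower bound becomes precisely Bernstein's inequality for $\DqG$, which the authors state as a separate open conjecture and describe as difficult. Saying that the induction of Theorem~\ref{thm:preservation} ``can be re-run at $q_0$'' therefore presupposes the very statement that is missing.

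Your other points are partly sound but secondary. The upper-bound estimates in Lemmata~\ref{lem:s1}--\ref{lem:s4} and the Koszul/Ore computations indeed depend on $q$ only through invertibility of $q$-integers, and the specialisation-of-localisation issue you flag is a real but comparatively minor technical point; resolving it would still not yield a proof without the lower bound. The abelian case $G=\C^\times$ plausibly does follow from Sabbah, as you say and as the paper notes, because Sabbah's theory is developed for a numerical shift parameter of infinite order. Finally, the suggestion to route through Etingof's $p$-support techniques is the same future direction the authors themselves propose in Section~\ref{subsec:future}; it is a reasonable research plan, but as written your text is a programme with the central step (Bernstein's inequality, or a substitute for the Kashiwara--Schapira coisotropicity argument at numerical $q$) left unaddressed, not a proof.
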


The main missing step towards proving Conjecture \ref{conj:not-a-root-of-one} is the following:

\begin{conjecture}
Suppose that $q$ is not a root of unity, and let $G=\GL_2$ or $\SL_2$.  Then Bernstein's inequality holds for $\DqG$, \ie the Gelfand-Kirillov dimension of any simple, strongly equivariant $\DqG$-module is at least $\dim G$.
\end{conjecture}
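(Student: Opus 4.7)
The plan is to establish Bernstein's inequality for $\DqG$ by adapting the classical strategy: construct a good filtration on $\DqG$ whose associated graded is commutative and carries a Poisson structure flatly deforming that of $\O(T^*G)$, and then prove a $q$-analogue of Gabber's involutivity theorem for the characteristic variety of any finitely generated $\DqG$-module. Granted these inputs, the characteristic variety of a simple strongly equivariant module is coisotropic in a symplectic variety of dimension $2\dim G$, and therefore has dimension at least $\dim G$.

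First I would construct a PBW-type order filtration on $\DqG = \OqG \sharp \OqG$ by placing the ``function'' copy of $\OqG$ in degree $0$ and filtering the ``momentum'' copy by the highest weight of the RE-generators appearing. For $G = \SL_2, \GL_2$ and $q$ not a root of unity, I would verify, via an explicit PBW basis for this smash product, that $\mathrm{gr}_F \DqG$ is a commutative Noetherian domain of Gelfand-Kirillov dimension $2\dim G$ whose spectrum flatly deforms $T^*G$, and that the $q$-commutator on $\DqG$ induces the canonical symplectic Poisson bracket on $\mathrm{gr}_F \DqG$ at leading order. For a simple strongly equivariant $\DqG$-module $M$ equipped with a good filtration, the characteristic variety $\mathrm{Ch}(M) := \supp \mathrm{gr}_F M \subset \mathrm{Spec}\,\mathrm{gr}_F \DqG$ then satisfies $\dim \mathrm{Ch}(M) = \GKdim M$, independent of the choice of good filtration, and the strongly equivariant hypothesis ensures that $\mathrm{Ch}(M)$ is invariant under the residual $G \times G$-action coming from left and right translations.

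The main obstacle, and by far the hardest step, is the $q$-analogue of Gabber's involutivity theorem for $\mathrm{Ch}(M)$. Gabber's classical proof already requires delicate filtered commutative algebra involving Rees constructions and careful analysis of the radical of the characteristic ideal; in the $q$-setting one must first introduce an auxiliary deformation parameter $\hbar$ with $q = e^\hbar$ in order to extract the Poisson bracket as the leading term of the $q$-commutator, and then verify that each step of Gabber's commutative algebra argument transports faithfully to this two-parameter framework. For $G = \C^\times$ precisely this is accomplished by Sabbah in \cite{Sabbah}; for $G = \SL_2, \GL_2$ a promising route is to exploit the Gauss decomposition to localize $\DqG$ to a dense affine chart of quantum-torus type on which Sabbah's theorem applies directly, and then use the strongly equivariant hypothesis to propagate coisotropicity of $\mathrm{Ch}(M)$ from the generic stratum to all of $\mathrm{Spec}\,\mathrm{gr}_F \DqG$ via the $G\times G$-invariance established above. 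As a complementary backup strategy, one can attempt Bernstein's original direct argument via the injection $F_k \DqG \hookrightarrow \Hom_\C(F_k M, F_{2k} M)$, with simplicity together with strong equivariance used to restrict the annihilator of $M$ sufficiently to reduce to a faithful action on a quotient algebra of known GK dimension $2\dim G$, from which the inequality $\GKdim M \geq \dim G$ follows from a standard dimension count on the Hilbert function.
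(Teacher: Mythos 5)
The statement you are attempting to prove is not in fact proved in the paper: it appears as an open Conjecture in Section~1.3, and the authors explicitly remark that ``While establishing Bernstein's inequality is a difficult problem in general, it may be tractable in the specific cases $G=\GL_2$, $\SL_2$. We hope to return to these questions in a future paper.'' There is therefore no paper proof to compare against, and what you have written should be read as a proposed strategy rather than a verification of an existing argument.

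Your primary strategy contains a fundamental obstruction. You propose to ``construct a good filtration on $\DqG$ whose associated graded is commutative and carries a Poisson structure flatly deforming that of $\O(T^*G)$.'' This cannot be done when $q$ is specialised to a fixed complex number that is not a root of unity. Inspect the defining relations of $\DqMat{\varepsilon}$ in Section~5.1: these are $q$-commutation relations of the form $\partial^i_j a^k_l - q^{\alpha} a^k_l \partial^i_j = (\text{lower order})$ with $q^\alpha$ a fixed nonzero scalar distinct from~$1$. In any filtration placing the $a$'s in degree $0$ and filtering by $\partial$-degree, the associated graded inherits exactly these $q$-commutation relations and hence is a quantum-torus-type algebra, not a commutative one. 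The degeneration to a commutative Poisson algebra occurs only in the $\hbar$-adic setting $q = e^\hbar$ over $\Chbar$ (equivalently $q$ \emph{generic}), which is precisely the case the paper does handle elsewhere and which the conjecture is meant to extend beyond. For this reason the whole ``characteristic variety in $\mathrm{Spec}\,\mathrm{gr}_F\DqG$, Gabber involutivity, coisotropicity'' package is unavailable at a fixed non-root-of-unity $q$: there is no commutative spectrum to take the support in, and no symplectic form against which to test coisotropicity.

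Your ``complementary backup strategy'' --- Bernstein's original Hilbert-function argument via the inclusion $F_k\DqG \hookrightarrow \Hom_\C(F_k M, F_{2k}M)$ --- is in fact the only viable route here, and it should be promoted from backup to the primary approach; this is essentially what Sabbah carries out for $\Dq(\C^\times)$ at $q$ not a root of unity. But your account of it is too vague to constitute a proof for $\SL_2$ and $\GL_2$. The key technical content, which you gesture at with ``simplicity together with strong equivariance used to restrict the annihilator of $M$ sufficiently,'' is precisely where the difficulty lives: without strong equivariance Bernstein's inequality is simply false (the one-dimensional module $(A-I)\backslash\DqG / (D-I)$ has GK dimension $0$), so one must quantitatively exploit the quantum moment map structure to force the module to be ``spread out'' in a $G$-worth of directions before the growth estimate can deliver $\GKdim M \geq \dim G$. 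That argument is absent. The paper suggests an alternative route, via Etingof's $p$-support technique (which bypasses associated-graded commutativity entirely), as the more promising direction for closing this conjecture.
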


While establishing Bernstein's inequality is a difficult problem in general, it may be tractable in the specific cases $G=\GL_2$, $\SL_2$.  We hope to return to these questions in a future paper.

\paragraph{Parabolic restriction and quantum $A$-polynomial}  Another example of a defect skein module is the thickened surface $\Sigma_g\times I$, with a bipartite colouring into $G$ in the region $t>\frac12$, and $T$ in the region $t<\frac12$, with $\Sigma_g\times \{\frac12\}$ coloured with $B$.  Recently the papers \cite{JLSS} and \cite{BrownJ} introduced the notion of decorated and redecorated quantum character stacks of 3-manifolds as a way to study such defects.  We expect our methods in this paper can be adapted to show that the functors induced by these defects also preserve holonomicity. 

\subsection{Outline}

The remainder of the paper is organised as follows.  In Section \ref{sec:background}, we recall several preliminaries.  Section \ref{subsec:cats} recounts the notions of $\field$-enriched categories, bimodules, co-ends and related constructions.  Section \ref{subsec:bfa-moment} recalls the braided function algebra, the quantum Harish-Chandra category, moment maps, and the central notion of strong equivariance for algebras.  Section \ref{subsec:skein-mods} recalls the basic facts we require from skein theory, and finally Section \ref{subsec:def-quant-hol-mods} recalls a number of basic facts we will require from the theory of deformation quantization.

In Section \ref{sec:int-sk} we consider internal skein algebras and modules in the case of multiple gates, and we give a monadic reconstruction echoing that of \cite{BZBJ18a,BZBJ18b}.  A new ingredient is the appearance of the algebra $\Dq'(G)$, which gets an interpretation as an internal skein algebra for an annulus with gates on each boundary component.

In Section \ref{sec:transfer bimodules} we introduce skein transfer bimodules, and in particular we focus on two types of such bimodules: those for 2-handle attachments along separating and non-separating curves.  We show that in both cases the resulting transfer bimodule is an induced module from a trivial character for a flat subalgebra $\OqG$, sitting inside either a $\Dq(G)$ or $\Dq'(G)$ factor, respectively, in the internal skein algebra.

In Section \ref{sec:MainThm} we state and prove the main theorem, asserting the finiteness and holonomicity of the skein module relative to its boundary. In Section \ref{subsec:GL2-background}, we recall in more detail the algebras of $q$-difference operators on $\GL_2$ and $\SL_2$. In Section \ref{subsec:GL2-Koszul}, we recall Koszul resolutions for the counit of the braided function algebra.  In Section \ref{subsec:Ore-Loc} we define the Ore sets which quantize an open chart complementary to the identity element in $G$, and moreover we establish that the localisation functor to each chart preserves finiteness and holonomicity.  In Section \ref{subsec:les} we construct the $q$-analogue of the usual long exact sequence relating derived inverse images to localisations.  In this way we show that the derived inverse image preserves holonomicity.

\subsection{Acknowledgements}
Several years ago, we were encouraged by Charlie Frohman, and independently by Theo Johnson-Freyd, to pursue finite generation of skein modules of 3-manifolds with boundary, via generalised Heegaard decompositions.  While it took many years for these ideas to percolate to their present form, the vision of these two friends is very much present in this work, and we are grateful for their generosity and encouragement.

We have also benefited immensely from discussions with David Ben-Zvi, Sam Gunningham and Pavel Safronov regarding six-functor formalism for $\D$-modules and its potential relations to skein theory, and with Jennifer Brown, Ben Ha\"ioun and Jackson Van Dyke regarding many algebraic aspects of skein theory. We are also grateful to Pavel Etingof for enlightening conversations about potential generalisations of our results to arbitrary groups $G$, and to all $q$ not a root of unity.  Finally, we are grateful to Giulio Beletti and Renaud Detcherry for updates about their related works and well as many insightful conversations.

The work of both authors was supported by the Simons
Foundation award 888988 as part of the Simons Collaboration on Global Categorical Symmetry.  The work of DJ was supported by the EPSRC Open Fellowship ``Complex Quantum Topology", grant number EP/Y008812/1. 

\section{Background}
\label{sec:background}
\noindent\textbf{Conventions:}
\begin{itemize}
    \item We say that $q$ is \textbf{generic} to indicate we are working over $\Cqloc$. We say that $q$ is \textbf{not a root of unity} to mean that either $q$ is generic or specialised at a number $q\in \C^\times$ such that $q^l \neq 1$ for all $l\in \mathbb{N}$.
    \item For an algebra $A$ over a commutative ring $k$ and $\lambda \in k$, we denote the $\lambda$-\textbf{commutator} of two elements $a,b\in A$ by $[a,b]_\lambda:= ab-\lambda ba$. 
    Similarly, the $\lambda$-\textbf{anticommutator} will be denoted by $\{a,b\}_\lambda := [a,b]_{-\lambda}= ab + \lambda ba.$
    \item For $n\in\mathbb{Z}$ and $q\in \field^\times$, we write $\numberq{n}{q}:= (q^n-1).$
\end{itemize}

\subsection{Categories}\label{subsec:cats}
We recall some categorical notions to fix notation and set up the algebraic target of skein theory as a categorified TQFT, see \cite{Cooke-thesis} or \cite{GJS} for details. 
\begin{definition}
The 2-category $\Cat$ of $\field$-linear categories consists of: 
\begin{itemize}
    \item Objects: Small $\field $-linear categories, 
    \item $\Hom$-category: The category of morphisms is given by the category of $\field$-linear functors: 
    \begin{equation*}
        \Hom_{\Cat}(\mathcal{C},\mathcal{D}) := \Fun_{\field}(\mathcal{C},\mathcal{D})~.
    \end{equation*}
\end{itemize}
\end{definition}
Given two $\field$-linear categories $\mathcal{C}$ and $\mathcal{D}$, their $\field$-linear tensor product will be denoted by $\mathcal{C}\boxtimes \mathcal{D}$. This endows $\Cat$ with a symmetric monoidal structure with the monoidal unit being the category of $\field$-modules $\Mod_k$. 

\begin{definition}
    The bicategory $\Bimod$ of $\field$-linear categories and bimodules consists of: 
    \begin{itemize}
        \item Objects: Small $\field$-linear categories, 
        \item Hom-category: $\Hom_{\Bimod}(\mathcal{C},\mathcal{D}) = \Fun_{\field}(\mathcal{C}\boxtimes \mathcal{D}\opp, \Mod_{\field})$~.
    \end{itemize}
\end{definition}
Composition of $F: \mathcal{C} \boxtimes \mathcal{D}\opp \rightarrow \Mod_\field$ with $G: \mathcal{D}\boxtimes \mathcal{E}\opp \rightarrow \Mod_\field$ is given by the coend \cite[Ch.\ 7.8]{Bor94}:
\begin{equation*}
    G\circ F := \int^{d\in \mathcal{D}}{F(-, d) \otimes G(d, -)}: \mathcal{C}\boxtimes E\opp \rightarrow \Mod_\field~. 
\end{equation*}
The $\field$-linear tensor product $\boxtimes$ equips $\Bimod$ with a symmetric monoidal structure \cite[Sec.\ 7]{DS97}.

The bicategory $\Bimod$ can be seen as an enlargement of $\Cat$ at the level of morphisms in the following sense: There is a symmetric monoidal fully faithful functor $\Cat\rightarrow \Bimod$ which is the identity on objects and sends a functor $F: \mathcal{C}\rightarrow \mathcal{D}$ to the bimodule \[\Hom_{\mathcal{D}}\left(-, F(-)\right): \mathcal{C}\boxtimes \mathcal{D}\opp\rightarrow \Mod_\field~.\] 

The categories that are considered in this text will typically not be closed under colimits and thus we often pass to their cocompletions which will live in the category of locally presentable categories, see \cite{BCJF} for details.
\begin{samepage}
\begin{definition}
    The bicategory of locally presentable categories $\prl$ consists of: 
    \begin{itemize}
        \item Objects: Locally presentable $\field$-linear categories, 
        \item Hom-category: $\Hom_{\prl}(\mathcal{C},\mathcal{D}):= \Fun_{cc}(\mathcal{C},\mathcal{D})$ (cocontinuous functors)~.
    \end{itemize}
\end{definition}
\end{samepage}

The Deligne-Kelly tensor product equips $\prl$ with a symmetric monoidal structure with monoidal unit $\Mod_\field$.

There is a symmetric monoidal fully faithful functor $\widehat{(\mbox{-})}:\Bimod \rightarrow \prl$ defined by
\begin{equation}\label{def:cocompletion}
    \mathcal{C} \mapsto \widehat{\mathcal{C}} := \Fun_{\field}(\mathcal{C}\opp, \Mod_\field) 
\end{equation}
and 
\begin{equation*}
    F: \mathcal{C}\boxtimes \mathcal{D}\opp \rightarrow \Mod_\field \mapsto \left(\widehat{F}: \widehat{\mathcal{C}} \rightarrow \widehat{\mathcal{D}},~ X \mapsto \int^{c\in \mathcal{C}}{F(c,-)\otimes X(c)}\right)~.
\end{equation*}
In other words, $\widehat{\mathcal{C}}$ is the category of $\Mod_\field$-valued presheaves on $\mathcal{C}$. It is realized as a free cocompletion of $\mathcal{C}$ via the Yoneda embedding $\mathcal{C}\rightarrow \widehat{\mathcal{C}}, c \mapsto \Hom_{\mathcal{C}}(-,c)$. 

The cocompletion functor $\widehat{(\mbox{-})}: \Cat \rightarrow \prl$ carries algebras in $\Cat$ to algebras in $\prl$. In particular, if $(\mathcal{C},\otimes_{\mathcal{C}},\unit_\mathcal{C})$ is a monoidal category in $\Cat$, its cocompletion $\ind{\mathcal{C}}$ is equipped with a monoidal structure via the \textit{Day convolution} \cite{Day}: For $X,Y \in \ind{\mathcal{C}}$, their tensor product is given by 
\begin{equation*}
    X\otimes_{\ind{\mathcal{C}}}Y := \int^{c,c' \in \mathcal{C}}{\Hom_{\mathcal{C}}(-, c\otimes_{\mathcal{C}} c')\otimes X(c)\otimes Y(c')}~
\end{equation*}
with monoidal unit $\Hom_{\mathcal{C}}(-,\unit_{\mathcal{C}})$. Moreover, if $\mathcal{C}$ is braided, then $\ind{\mathcal{C}}$ becomes braided. 

Note that an algebra structure in $\ind{\mathcal{C}}$ is the same as the structure of a lax monoidal functor. In other words, an algebra in $\ind{\mathcal{C}}$ is a presheaf $X: \mathcal{C}\opp \rightarrow \Mod_\field$ along with natural transformations
\[
\varphi_{c,c'}: X(c)\otimes X(c') \rightarrow X(c\otimes_\mathcal{C} c')
\]
and 
\[
\eta:\field \rightarrow X(\unit_\mathcal{C})~.
\]
compatible with associators and unitators. A module $Y\in \ind{\mathcal{C}}$ over such an algebra $X$ comes with natural transformations
\[
X(c) \otimes Y(c') \rightarrow Y(c\otimes c')
\]
compatible with the lax monoidal structure of $X$. 

\subsection{Braided function algebra and moment maps}\label{subsec:bfa-moment}

Let $\mathcal{A}$ be a $\field$-linear ribbon category and let $T: \mathcal{A}\boxtimes \mathcal{A} \rightarrow \mathcal{A}$ denote its tensor product functor. This admits a right adjoint $T^R$ when passing to the free cocompletion, \ie we have the following adjunction 
\begin{equation*}
    T: \Ahat\boxtimes\Ahat \overset{\dashv}{\leftrightarrows}\Ahat: T^R
\end{equation*}
The braiding in $\mathcal{A}$ turns $T$ into a (strong) monoidal functor and $T^R$ into a lax monoidal functor. The following definitions have been constructed in \cite{Majid,Lyub}.

\begin{definition}
The braided function algebra $\OA$ is
\[
    \OA=T\circ T^R(\unit) = \int^{x \in \mathcal{A}}{x^{\ast} \otimes x} \in \Ahat 
\]
\end{definition}
The braided function algebra carries a Hopf algebra structure: The algebra structure is induced from the (lax) monoidality of $T$ and $T^{R}$ and the coalgebra structure is induced by the adjunction. Finally, the antipode $S:\OA \rightarrow \OA$ uses the ribbon structure of $\mathcal{A}$. 

\begin{example}
    Let $q$ be not a root of unity and consider $\mathcal{A} = \Rep^\mathrm{fd}_qG \subset \Ahat = \Rep_q G$. The braided function algebra obtains the form 
    \begin{equation*}
        \OA \cong \bigoplus_{\lambda \in \Lambda^\mathrm{dom}}{V(\lambda)^\ast \otimes V(\lambda)}
    \end{equation*}
by the Peter-Weyl theorem where $\Lambda^{\mathrm{dom}}$ denotes the set of dominant weights. It coincides with the so-called \textit{reflection equation (RE) algebra} $\OqG$, which we describe in detail for $G=\GL_2, \SL_2$ in Section~\ref{subsec:GL2-background}.

    We also note that the algebra $\OFRT:=T^R(\unit) = \int^{x} x^\ast \boxtimes x \in \Ahat^{\boxtimes 2}$ in this example coincides with the FRT algebra \cite{FRT} which we denote by $\OqFRT$ to distinguish it from the RE algebra. 
\end{example}

Let $\Z(\mathcal{\mathcal{A}})$ denote the Drinfeld centre of $\mathcal{A}$. The braiding $b: T \overset{\sim}{\Rightarrow} T\circ \mathrm{flip}$ of $\mathcal{A}$ provides a braided tensor functor $\mathcal{A} \rightarrow \Z(\mathcal{A})$ by sending $x\in \mathcal{A}$ to $(x, b_{-,x})$. Similarly, the reverse braiding provides a braided tensor functor $\mathcal{A}^\mathrm{rev} \rightarrow \Z(\mathcal{A})$. Together, they combine to form a braided tensor functor
\begin{align}\label{eq:center-functor}
    &\mathcal{A}\boxtimes \mathcal{A}^\mathrm{rev} \rightarrow \Z(\mathcal{A})\\ 
    &X\boxtimes Y \mapsto \left(X\otimes_\mathcal{A} Y, (\id{X} \otimes b_{Y,-}^{-1}) \circ (b_{-,X}\otimes \id{Y})\right)\nonumber
\end{align}
Under this braided tensor functor, the braided function algebra $\OA$ can be equipped with a half-braiding 
\begin{equation*}
    \tau_X: X\otimes\OA \xrightarrow{\sim} \OA\otimes X
\end{equation*}
according to \eqref{eq:center-functor} which we call the \textbf{field goal transform}. This turns $\OA$ into a commutative algebra in $\Z(\Ahat)$.

\begin{definition}\label{def:HC}
    The \textbf{Harish-Chandra category} is defined as the category of $\OA$-modules in $\Ahat$
    \[
        \HC{\mathcal{A}} := \OA\modu(\Ahat)~.
    \]
\end{definition}
Since $\OA$ is commutative in $\Z(\Ahat)$, any $\OA$-module in $\Z(\Ahat)$ (and by the forgetful functor in $\mathcal{A}$) inherits a $\OA\mbox{-}\OA$-bimodule structure. In particular, $\HC{\mathcal{A}}$ turns into a monoidal category via the relative tensor product over $\otimes_{\OA}$. 

\begin{definition}
    Let $A$ be an algebra in $\Ahat$ with multiplication $m: A \otimes A \rightarrow A$. A \textbf{quantum moment map} is an algebra morphism that is central, \ie \[ m \circ (\id{A} \otimes \mu) = m \circ (\mu\otimes \id{A})\circ \tau_A : A \otimes \OA \rightarrow A ~.\]
\end{definition}
A quantum moment map is the necessary datum for an algebra $A \in \Ahat$ to be an algebra in $\HC{\mathcal{A}}$ \cite[Prop.\ 3.7]{Safronov}.  

Given a algebra $A$ with a quantum moment map $\mu: \OA \rightarrow A$, the category of $A$-modules $A\modu(\Ahat)$ carries a right $\HC{\mathcal{A}}$-module structure via 
\begin{equation*}
    M \triangleleft X := M \otimes_{\OA} X
\end{equation*}
where $M$ is a seen as an $A\mbox{-}\OA$-bimodule using the quantum moment map $\mu$. 

\begin{definition}
    Let $A$ be an algebra with a quantum moment map $\mu$. An $A$-module $M$ is called \textbf{strongly equivariant} if $M$ is trivial as a right $\OA$-module. Strongly equivariant modules form a full subcategory
    \[A\modu(\Ahat)^\mathrm{str}\subset A\modu(\Ahat)~.\]
\end{definition}

Let $\mathcal{M}$ be a right $\mathcal{A}$-module category with action 
\[
-\triangleleft- : \mathcal{M} \boxtimes \mathcal{A} \rightarrow \mathcal{M}~.
\]
In particular, for $M \in \mathcal{M}$ we a functor 
\begin{equation*}
    M\triangleleft - : \mathcal{A} \rightarrow \mathcal{M}
\end{equation*}
which admits a right adjoint when passing to the associated cocompletions 
\begin{equation*}
    \underline{\Hom}(M,-): \ind{\mathcal{M}} \rightarrow \Ahat~,
\end{equation*}
called the internal Hom. The internal endomorphism $\underline{\End}(M):= \underline{\Hom}(M,M)$ carries a natural algebra structure and we have the following reconstruction theorem: 
\begin{theorem}[See, for example \cite{BZBJ18a}]\label{thm:GOs}
    Let $\mathcal{M}$ be a right $\mathcal{A}$-module category and fix an  $\mathcal{A}$-progenerator $M\in \mathcal{M}$. We have the following equivalence of right $\mathcal{A}$-module categories,
    \begin{equation*}
        \ind{\mathcal{M}} \simeq \underline{\End}(M)\modu(\Ahat)~.
    \end{equation*}
\end{theorem}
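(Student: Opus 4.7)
The plan is to apply a Barr--Beck--Lurie monadicity argument to the adjunction obtained by cocompleting the action functor $M \triangleleft (-): \mathcal{A} \to \mathcal{M}$. First I would extend this to a cocontinuous functor $\ind{\mathcal{A}} \to \ind{\mathcal{M}}$ between presentable categories; since $\ind{\mathcal{M}}$ is presentable, the adjoint functor theorem produces a right adjoint which agrees on representables with the internal Hom $\underline{\Hom}(M, -): \ind{\mathcal{M}} \to \ind{\mathcal{A}}$ defined above.

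Next I would compute the induced monad on $\ind{\mathcal{A}}$. By the projection formula coming from the $\mathcal{A}$-linearity of $M \triangleleft (-)$, one obtains a natural isomorphism $\underline{\Hom}(M, M \triangleleft X) \cong \underline{\End}(M) \otimes X$ for $X \in \ind{\mathcal{A}}$, where $\otimes$ is the Day convolution product. Hence the monad in question is precisely the free $\underline{\End}(M)$-module monad on $\ind{\mathcal{A}}$, and its Eilenberg--Moore category is $\underline{\End}(M)\modu(\ind{\mathcal{A}})$. The resulting comparison functor $\ind{\mathcal{M}} \to \underline{\End}(M)\modu(\ind{\mathcal{A}})$ sends $N \in \ind{\mathcal{M}}$ to $\underline{\Hom}(M, N)$ with its tautological module structure coming from composition in $\mathcal{M}$.

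The crucial remaining step is conservativity of $\underline{\Hom}(M, -)$, which is the content of the $\mathcal{A}$-progenerator hypothesis once interpreted as saying that the essential image of $M \triangleleft (-)$ generates $\ind{\mathcal{M}}$ under colimits. Since $\underline{\Hom}(M, -)$ is a right adjoint between presentable categories it automatically preserves all limits, and Barr--Beck--Lurie then delivers the equivalence. A final bookkeeping step checks that the equivalence respects the right $\mathcal{A}$-module structures: this follows formally from the $\mathcal{A}$-linearity of $M \triangleleft (-)$ together with the half-braiding data on $\underline{\End}(M)$ inherited from the ambient braided category.

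I expect the main obstacle to lie less in the categorical machinery and more in pinning down a working notion of ``$\mathcal{A}$-progenerator'' that is at once strong enough to yield conservativity, and yet flexible enough to cover the skein-theoretic examples of interest, notably the distinguished progenerators appearing in the internal skein algebra presentations of Section~\ref{sec:int-sk}.
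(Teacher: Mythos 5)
The paper does not prove this theorem itself; it cites \cite{BZBJ18a}, where the argument indeed runs via monadic reconstruction in essentially the manner you describe, so your overall plan --- cocomplete the action functor, identify the monad via the projection formula, invoke Barr--Beck, descend to an $\mathcal{A}$-linear equivalence --- matches the reference.

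There is, however, a genuine gap in your verification of the monadicity hypotheses. You assert that because $\underline{\Hom}(M,-)$ is a right adjoint between presentable categories it preserves all limits, and that together with conservativity this ``delivers the equivalence.'' Limit preservation is irrelevant to monadicity: what Barr--Beck (and its $\infty$-categorical sharpening) require of the right adjoint, beyond conservativity, is preservation of the appropriate \emph{colimits} --- $U$-split coequalizers classically, or $U$-split geometric realizations in the Lurie form. In the present setting this colimit-preservation does hold, but for a non-formal reason your proposal should make explicit: $M$ and each $M\triangleleft c$ for $c\in\mathcal{A}$ lie in the small category $\mathcal{M}$, and $\ind{\mathcal{M}}$ is the presheaf category on $\mathcal{M}$, so these objects are compact projective; consequently $\underline{\Hom}(M,-)$, whose value at $c\in\mathcal{A}$ is $\Hom_{\ind{\mathcal{M}}}(M\triangleleft c,-)$, preserves all small colimits (colimits of presheaves being computed pointwise). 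This is exactly the content that the ``pro'' in progenerator supplies, and without it the argument as written does not close.
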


\subsection{Skein theory basics}\label{subsec:skein-mods}
Let $\mathcal{A}$ be a ribbon tensor category over a commutative ring $\field$. We refer to \cite{Cooke-thesis} for a more extensive review. 

\begin{definition}
\leavevmode 
\begin{itemize}
	\item An $\mathcal{A}$-\textbf{labelling} on a surface $\Sigma$ is a finite collection of framed points, each labelled by objects in $\mathcal{A}$.
	\item An $\mathcal{A}$-\textbf{ribbon graph} in a 3-manifold $M$ is a finite oriented ribbon graph possibly containing coupons, where ribbons end on coupons or possibly the boundary. Ribbons are labelled by objects in $\mathcal{A}$ and coupons are labelled by compatible $\mathcal{A}$-morphisms. In particular, an $\mathcal{A}$-ribbon graph in $M$ induces an $\mathcal{A}$-labelling on $\partial M$. 
	\item Let $M$ be a 3-manifold and $P$ a fixed $\mathcal{A}$-labelling on the boundary $\partial M$. The \textbf{relative $\mathcal{A}$-skein module} $\skmod_\mathcal{A}(M;P)$ is defined by
	\begin{equation*}
		\skmod_\mathcal{A}(M;P) := \field\langle \mathcal{A}\text{-ribbon graphs in }M\rangle /\sim
	\end{equation*}
	modulo isotopy and local relations by RT graphical calculus.
\end{itemize}
\end{definition}

\begin{definition}
\leavevmode 
	\begin{itemize}
		\item The \textbf{skein category} $\skcat_\mathcal{A}(\Sigma)$ of a surface $\Sigma$ consists of $\mathcal{A}$-labellings on $\Sigma$ as objects and morphism spaces are defined by skeins in the cylinder extending the $\mathcal{A}$-labellings, \ie 
		\begin{equation*}
			\Hom_{\skcat_\mathcal{A}(\Sigma)}(P, P') := \skmod_\mathcal{A}(\Sigma\times [0,1]; P\times \{0\}, P'\times \{1\})~.  
		\end{equation*}
		\item Let $M: \Sigma_\mathrm{in} \rightarrow \Sigma_{\mathrm{out}}$ be the 3-dimensional bordism. The construction of the skein module is functorial in the $\mathcal{A}$-labellings of the boundary by stacking and hence we associate the bimodule functor: 
		\begin{equation*}
			\skmod_\mathcal{A}: \skcat_\mathcal{A}(\Sigma_\mathrm{in})^\mathrm{op}  \boxtimes \skcat_\mathcal{A}(\Sigma_\mathrm{out}) \rightarrow \Mod_\field
		\end{equation*}
	\end{itemize}
\end{definition}
The above constructions assemble into a 3d oriented (once-categorified) TQFT:
\begin{theorem}[\cite{Walker-notes}]
	Skein theory forms a symmetric monoidal functor: 
	\[\operatorname{Sk}_\mathcal{A}: \operatorname{Bord}_{3,2}^\mathrm{or} \rightarrow \operatorname{Bimod}\]
\end{theorem}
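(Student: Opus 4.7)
The plan is to define $\operatorname{Sk}_\mathcal{A}$ levelwise and then verify the axioms of a symmetric monoidal bifunctor between bicategories. On objects (closed oriented surfaces $\Sigma$) I take the skein category $\skcat_\mathcal{A}(\Sigma)$. On 1-morphisms (bordisms $M:\Sigma_{\mathrm{in}}\to\Sigma_{\mathrm{out}}$) I take the bimodule $\skmod_\mathcal{A}(M)$ already defined, whose functoriality in the boundary labellings uses stacking of ribbon graphs in collared neighborhoods. On 2-morphisms (isotopy classes of relative diffeomorphisms) I use that an ambient diffeomorphism sends ribbon graphs to ribbon graphs preserving the local RT relations, hence descends to an isomorphism of bimodules that is well defined on isotopy classes.

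The core content of the theorem is the gluing (excision) property, which I would formulate and prove as the key lemma: if $M = M_1\cup_\Sigma M_2$ is obtained by gluing along a closed surface $\Sigma$, then there is a canonical isomorphism of bimodules
\[
\skmod_\mathcal{A}(M)\;\cong\;\skmod_\mathcal{A}(M_2)\circ\skmod_\mathcal{A}(M_1)\;=\;\int^{P\in\skcat_\mathcal{A}(\Sigma)}\skmod_\mathcal{A}(M_1;-,P)\otimes\skmod_\mathcal{A}(M_2;P,-).
\]
The right-to-left map is stacking: given a skein in $M_1$ ending on a labelling $P$ and a compatible skein in $M_2$, their union is a skein in $M$. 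To construct a two-sided inverse I would put any given ribbon graph in $M$ into general position with respect to the gluing surface $\Sigma$, so that it meets $\Sigma$ transversely in framed labelled points $P$, and observe that two such transversal representatives differ by an isotopy that can be decomposed into moves localised either inside $M_1$, inside $M_2$, or across $\Sigma$ (the last being exactly the coend relation identifying $(x\circ f,y)$ with $(x, y\circ f)$ for $f$ a skein in $\Sigma\times I$). Checking that the RT relations in $M$ decompose compatibly into RT relations in $M_1$ and $M_2$ (since they are local) then identifies the quotient with the coend. This is the part I expect to be the main obstacle: the formal verification that transverse representatives suffice, together with the bookkeeping of how moves across $\Sigma$ correspond exactly to the coend relation, requires either a careful PL/smooth general-position argument or the framework of factorisation homology a la \cite{AFRbeta}.

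Given this lemma, horizontal composition of bimodules matches gluing of bordisms by construction, and the associator and unitor coherences follow from associativity of gluing combined with uniqueness of the coend. For symmetric monoidality I verify that disjoint unions are sent to Deligne--Kelly tensor products: on objects $\skcat_\mathcal{A}(\Sigma_1\sqcup\Sigma_2)\simeq\skcat_\mathcal{A}(\Sigma_1)\boxtimes\skcat_\mathcal{A}(\Sigma_2)$ because an $\mathcal{A}$-labelling on a disjoint union is a pair of labellings and skeins in $(\Sigma_1\sqcup\Sigma_2)\times I$ split, by general position, into skeins in each factor; on 1-morphisms the analogous splitting gives $\skmod_\mathcal{A}(M_1\sqcup M_2)\cong\skmod_\mathcal{A}(M_1)\boxtimes\skmod_\mathcal{A}(M_2)$. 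The empty surface goes to $\Mod_\field$ because $\skcat_\mathcal{A}(\emptyset)$ has one object with endomorphisms $\skmod_\mathcal{A}(\emptyset\times I)=\field$. Finally, the symmetry is implemented by the swap bordism $\Sigma_1\sqcup\Sigma_2\to\Sigma_2\sqcup\Sigma_1$, whose induced bimodule is the canonical swap of $\boxtimes$; the hexagon and symmetry axioms reduce to standard isotopies of bordisms combined with the gluing lemma.
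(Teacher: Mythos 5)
The paper offers no proof of this statement: it is quoted as a black box from Walker's notes \cite{Walker-notes}, with the closely related identification of skein categories via factorization homology recorded in \cite{Cooke-thesis,BrownHaioun}. Your outline follows the same route as those sources: define $\operatorname{Sk}_\mathcal{A}$ by stacking ribbon graphs, check monoidality on disjoint unions, and reduce the functoriality of composition to an excision lemma identifying $\skmod_\mathcal{A}(M_1\cup_\Sigma M_2)$ with the coend $\int^{P\in\skcat_\mathcal{A}(\Sigma)}\skmod_\mathcal{A}(M_1;-,P)\otimes\skmod_\mathcal{A}(M_2;P,-)$, which is precisely horizontal composition in $\Bimod$ as defined in Section~\ref{subsec:cats}. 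So the structure of your argument is the right one, and the routine parts (functoriality of stacking, 2-morphisms from isotopy classes of diffeomorphisms, $\boxtimes$-compatibility on disjoint unions, the unit and symmetry) are handled correctly.

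However, as you yourself flag, the decisive step is left as a placeholder, and it is exactly this step that constitutes the theorem. Surjectivity of the stacking map is an easy transversality statement, but injectivity requires more than observing that RT relations are local: given two transverse representatives equal in $\skmod_\mathcal{A}(M)$, you must put the connecting isotopy \emph{and} the balls supporting the defining relations into general position with respect to $\Sigma$, enumerate the finitely many critical times (tangencies of ribbons with $\Sigma$, coupons or relation-balls crossing $\Sigma$), and verify that each such event is realised either inside $\skmod_\mathcal{A}(M_1;-,P)$, inside $\skmod_\mathcal{A}(M_2;P,-)$, or as the dinaturality relation for a morphism of $\skcat_\mathcal{A}(\Sigma)$ acting on the two sides. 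Saying that this ``requires either a careful PL/smooth general-position argument or the framework of factorisation homology'' is accurate, but it defers the entire content of the result to the very references the paper cites; as a standalone proof your text is therefore a correct plan rather than a proof. If you want to close the gap without reproducing Walker's argument, the cleanest route is the one in \cite{Cooke-thesis,BrownHaioun}: establish that $\skcat_\mathcal{A}$ satisfies excision and hence computes $\int_\Sigma\mathcal{A}$, and deduce the gluing isomorphism for 3-dimensional bordisms from the universal property of the coend there.
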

The free cocompletion functor $\widehat{(-)}: \operatorname{Bimod} \rightarrow \prl$ from \eqref{def:cocompletion} further gives the $\prl$-valued TQFT: 
\begin{equation*}
	\skhat_{\mathcal{A}} := \widehat{(-)}\circ \operatorname{Sk}_\mathcal{A}: \operatorname{Bord}_{3,2}^{\mathrm{or}}\rightarrow \prl~.
\end{equation*}

\begin{remark}
	Skein theory is closely related to factorization homology. Namely, in \cite{Cooke-thesis,BrownHaioun} it is proven that 
	\begin{equation*}
		\skcat_\mathcal{A}(\Sigma) \simeq \int_\Sigma \mathcal{A}~, \quad
		\widehat{\skcat}_\mathcal{A}(\Sigma) \simeq \int_\Sigma \widehat{\mathcal{A}}~.
	\end{equation*}
	Conjecturally, a similar identity also holds on the level of 3-manifolds by using the $\beta$-version of factorization homology \cite{AFRbeta}. We note, however, that our work is independent of this conjecture.
\end{remark}

\subsection{Deformation quantization and holonomic modules}\label{subsec:def-quant-hol-mods}
In this section, we recall some important notions for deformation quantization modules including the condition of holonomicity, following \cite{KS}. Unless stated otherwise, we will work over $\Chbar$ under the base change $q\mapsto e^\hbar$.

A $\Chbar$-module $M$ is called $\hbar$-\textbf{torsion-free} if $\hbar:M\rightarrow M$ is injective, or equivalently if the $\hbar$-localisation map 
\[M\rightarrow \hloc{M}:=\Chloc\otimes_{\Chbar} M\]
is injective. 
The $\hbar$-completion of a $\Chbar$-module  $M$ is the module 
\begin{equation*}
    \hcmpl{M} := \lim_n M/\hbar^n M~.
\end{equation*}
The module $M$ is called $\hbar$-\textbf{complete} if the canonical map $M\rightarrow \hcmpl{M}$ is an isomorphism.
 
\begin{definition}
	Let $A_0$ be a commutative $\C$-algebra and $A_0\llbracket\hbar\rrbracket$ the associated $\Chbar$-algebra. A \textbf{star-product} $\star$ on $A_0\llbracket\hbar\rrbracket$ is an associative $\Chbar$-linear multiplication such that for all $a,b \in A_0$ 
	\begin{equation*}
		a \star b = \sum_{i\geq 0}{P_i(a,b)\, \hbar^i} \in A_0\llbracket\hbar\rrbracket~
	\end{equation*}  
	with the following properties: 
	\begin{enumerate}
		\item Each $P_i(\mbox{-},\mbox{-})$ is a bi-differential operator,
		\item $P_0(a,b) = a b \in A_0$,
		\item $P_i(a,1) = P_i(1,a) = 0$ for all $i>0$ and $a\in A_0$. 
	\end{enumerate}
\end{definition}
This endows $A_0$ with a Poisson bracket defined by: 
\begin{equation*}
	\{a,b\} := P_1(a,b) - P_1(b,a) = \hbar^{-1}(a\star b - b \star a) ~ \mathrm{mod} ~ \hbar
\end{equation*}
We say the star-algebra $(A_0\llbracket\hbar\rrbracket,\star)$ \textbf{deformation quantises} the Poisson algebra $(A_0, \{\cdot,\cdot\})$. 

\begin{definition}
A $\Chbar$-algebra $A$ is a \textbf{deformation quantization algebra} if it is isomorphic to a star-algebra $(A_0\llbracket\hbar\rrbracket,\star)$ for some commutative algebra $A_0$. In particular, such algebras are $\hbar$-torsion-free and $\hbar$-complete. 
\end{definition}
Let $X$ be an algebraic variety over $\C$. A \textbf{deformation quantization algebra} $A_X$ over $X$ is a deformation quantization algebra of $\mathcal{O}_X$. In particular, it induces a Poisson structure on $X$.

\begin{definition}
\leavevmode 
\begin{itemize}
\item A finitely generated $A_X$-submodule $M'$ of a finitely generated $\hloc{A_X}$-module $M$ is called an $A_X$-\textbf{lattice} if $\hloc{(M')}\cong M$. 
\item Let $M$ and $M'$ be as above. The \textbf{singular support} $SS(M;M')$ is defined as the support of $M'/\hbar$ in $X$.  
\end{itemize}
\end{definition}

\begin{proposition}{\cite[Prop.\ 2.3.18]{KS}}\label{prop:Gabber}
    The singular support $SS(M;M')$ is a coisotropic subvariety in $X$. 
\end{proposition}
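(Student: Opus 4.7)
The plan is to argue locally and reduce coisotropicity to an involutivity statement about the annihilator ideal of the associated graded module $\sigma(M'):=M'/\hbar M'$, in the style of Gabber. Concretely, let $\mathcal{I}\subset \mathcal{O}_X$ denote the radical ideal defining $SS(M;M')$, equivalently the radical of $J:=\operatorname{Ann}_{\mathcal{O}_X}(\sigma(M'))$. The Poisson structure on $\mathcal{O}_X$ is the one induced by the star-product, so coisotropicity of $SS(M;M')$ is equivalent to $\{\mathcal{I},\mathcal{I}\}\subset \mathcal{I}$. Since the radical of a Poisson ideal is itself Poisson (a standard fact using Leibniz together with the identity $\{f^n,g\}=nf^{n-1}\{f,g\}$), it suffices to prove that $J$ is closed under the Poisson bracket.

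For this involutivity step, I would work locally and take $f,g\in J$. Lift them to elements $\widetilde{f},\widetilde{g}\in A_X$ via the identification $A_X/\hbar A_X\cong \mathcal{O}_X$. By definition of $J$ we have $\widetilde{f}\cdot M'\subset \hbar M'$ and $\widetilde{g}\cdot M'\subset \hbar M'$, so
\[
\widetilde{f}\,\widetilde{g}\cdot M'\subset \hbar\,\widetilde{f}\cdot M'\subset \hbar^{2} M', \qquad \widetilde{g}\,\widetilde{f}\cdot M'\subset \hbar^{2} M',
\]
and consequently $[\widetilde{f},\widetilde{g}]\cdot M'\subset \hbar^{2} M'$. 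On the other hand, the star-product axioms give $[\widetilde{f},\widetilde{g}]=\hbar\,\widetilde{\{f,g\}}+\hbar^{2} r$ for some $r\in A_X$, so $\hbar\bigl(\widetilde{\{f,g\}}+\hbar r\bigr)\cdot M'\subset \hbar^{2} M'$. Using $\hbar$-torsion-freeness of $A_X$ and $M'$ (which holds because $A_X$ is a deformation quantization algebra and $M'$ sits inside the $\hbar$-torsion-free module $M=\hloc{(M')}$), we can cancel one factor of $\hbar$ to obtain $\widetilde{\{f,g\}}\cdot M'\subset \hbar M'$, i.e.\ $\{f,g\}\in J$.

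Combining the two steps, $J$ is a Poisson ideal, hence so is $\mathcal{I}=\sqrt{J}$, so $SS(M;M')$ is coisotropic. The main obstacle in making this rigorous is the $\hbar$-torsion cancellation: I should verify that $M'$ itself (and not just $M$) is $\hbar$-torsion-free, which follows from $M'\subset M$ together with the lattice hypothesis $\hloc{(M')}=M$; and I should check that good lifts $\widetilde{f},\widetilde{g}\in A_X$ actually act on $M'$ in the required way, which holds because $J$ is defined as the annihilator of $\sigma(M')$ precisely in terms of such lifted actions. The radical-of-Poisson-is-Poisson step is a formal consequence of the Leibniz rule and does not require any specific input from the deformation quantization side.
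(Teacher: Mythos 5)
The paper does not prove this proposition: it is stated as a citation to Kashiwara--Schapira \cite[Prop.\ 2.3.18]{KS}, with no proof supplied. So the relevant question is whether your argument is a correct proof of that result.

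Your reduction to the annihilator ideal $J=\operatorname{Ann}_{\mathcal{O}_X}(M'/\hbar M')$ and your commutator computation showing $\{J,J\}\subset J$ are both correct; the $\hbar$-torsion-freeness of $M'$ (inherited from $M'\subset M$, on which $\hbar$ acts invertibly) justifies cancelling the factor of $\hbar$, and the identification of $SS(M;M')$ with $V(\sqrt{J})$ is fine for a finitely generated $\mathcal{O}_X$-module. The genuine gap is the last step, passing from $J$ involutive to $\sqrt{J}$ involutive. The ``standard fact'' you invoke -- that the radical of a Poisson ideal is Poisson -- concerns ideals $I$ satisfying $\{I,\mathcal{O}_X\}\subset I$ (closed under bracket with the \emph{whole} ring), and the one-line argument via $\{f^n,g\}=nf^{n-1}\{f,g\}$ and Leibniz belongs to that setting. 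What you actually have is only the weaker involutivity $\{J,J\}\subset J$, and the assertion that the radical of an involutive ideal is again involutive is precisely the nontrivial content of Gabber's integrability theorem: it requires Noetherianity and a rather delicate argument on minimal primes over $J$, and cannot be dispatched with the Leibniz identity alone. (Note also that $J$ itself is typically not radical -- e.g.\ for $M'=A_X/A_X\partial^2$ over $q$-Weyl-type $A_X$ one finds $J$ generated by a square -- so one genuinely needs to pass to $\sqrt{J}$.) In short, the easy commutator step you carried out is exactly the $\hbar$-adic analogue of the ``easy half'' of Gabber, and the remaining radical step, which you hand-waved, is where the depth of the theorem lies; that is why the paper simply cites \cite{KS}.
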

Note that the dimension of $SS(M;M')$ is independent of the lattice. In particular, if $X$ is symplectic we can define holonomic modules as follows. 
\begin{definition}\label{def:holonomic}
    Let $X$ be a symplectic variety with a deformation quantisation algebra $A_X$. A finitely generated $\hloc{A_X}$-module $M$ is called \textbf{holonomic} if there exists an $A_X$-lattice $M'$ such that $SS(M;M')$ is Lagrangian.  
\end{definition}
By Proposition~\ref{prop:Gabber} holonomicity reduces to a criterion of half-dimensionality. Namely, we require 
\begin{equation}\label{eq:half-dim}\dim SS(M;M')= \frac{1}{2}\dim(X)~.\end{equation}
 
We now rephrase the above dimension criterion in terms of two different notions of dimension: the \textit{grade} defined by vanishing of Ext modules, and the \textit{Gelfand-Kirillov (GK) dimension} which measures the asymptotic growth based on filtrations. We follow the conventions made in \cite{Goodearl-Zhang}. 

Let $A$ be a Noetherian algebra over $\field$. 
\begin{definition}
    Let $M$ be a non-zero finitely generated $A$-module. Its \textbf{\textbf{grade}} is defined as the non-negative integer 
\[ \grade(M): = \inf\{i \mid \Ext^i_A (M,A)\neq 0\}~.\]
\end{definition}
Clearly, $j(M)$ is bounded from above by the projective dimension $\pdim(M)$. By definition $\Ext^\bullet_A(M,A)$ is bounded in $[\grade(M),\pdim(M)]$.  The algebra $A$ is said to satisfy the \textit{Auslander condition} if for any finitely generated $A$-module $M\neq 0$, any submodule $N\subset \Ext^i_A(M,A)$ satisfies 
\[\Ext^j_A(N,A) =0 \quad \forall j <i~.\]
In particular, $j(N)\geq  j(M)$.  If, in addition, $A$ has finite global dimension\footnote{There is no ambiguity between left and right global dimensions as $A$ is assumed to be Noetherian on both sides.} resp.\ finite injective dimension then it is called \textit{Auslander-regular} resp. \textit{Auslander-Gorenstein}.  

We now introduce the notion of GK dimensions, for which we assume that $A$ is also finitely generated. Let $V$ be a finite dimensional space of generators for $A$ and define the $\mathbb{Z}_{\ge 0}$-filtration $A_{\leq n}\subset A$ by the total degree of words. For an extensive exposition to GK dimensions we refer the reader to \cite{Krause-Lenagan} and \cite{McConnell-Robson}. 
\begin{definition}
The \textbf{GK dimension} of $A$ is defined as 
    \[
    \operatorname{GKdim}(A):= \inf\{d\mid \exists c:\dim_\field(A_{\leq n}) \leq c \cdot n^{d}\} \in \mathbb{R}_{\geq 0} \cup \{\infty\}
    \]
\end{definition}
If $\GKdim(A) = 0$, then $A$ is finite dimensional. The algebras of interest in this paper will be infinite dimensional of finite GK dimension, \ie of \textit{polynomial growth}. 

Similarly, if $M$ is a finitely generated $A$-module with a fixed choice of generators $m_1,\dots,m_r$ define the filtration
\[
M_{\leq n}:= A_{\leq n}\{m_1,\dots,m_r\} \subset M
\]
\begin{definition}
    The GK dimension of $M$ is defined as 
    \[
    \GKdim(M) := \inf\{d\mid \exists c : \dim(M_{\leq n}) \leq c \cdot n^{d}\}
    \]
\end{definition}
The algebra $A$ is called \textit{Cohen-Macaulay} if 
\begin{equation*}
        \grade(M) + \GKdim(M) = \GKdim(A)
\end{equation*}
for every non-zero finitely generated $A$-module $M$. In other words, Cohen-Macaulay algebras exhibit the grade of a module as the associated GK codimension.  In particular, if the algebra $A$ has integral GK dimension, so does any non-zero finitely generated module $M$. 

The following proposition is a collection of statements describing the behaviour of grades and GK dimensions for deformation quantization algebras. It relies on results from \cite{KS} and classical results of GK dimensions, which can be found in \cite{Krause-Lenagan}. 
\begin{proposition}\label{prop:DQ-GKdims}
    Let $A_X$ be a deformation quantization of a Poisson variety $X$, $M$ a finitely generated $\hloc{A_X}$-module and $M'$ the corresponding $\A_X$-lattice of $M$. Then, 
    \begin{enumerate}
        \item $A_X$ is Auslander-regular and Cohen-Macaulay, 
        \item $\GKdim(M) = \dim SS(M;M')$ and
        \item $\GKdim(A_X) = \dim X$.
    \end{enumerate}
\end{proposition}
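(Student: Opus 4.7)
The plan is to prove each item by leveraging the $\hbar$-adic filtration on $A_X$, whose associated graded ring is (polynomially in $\hbar$) the commutative coordinate ring $\mathcal{O}_X$; the overall strategy is to transfer algebraic properties of $\mathcal{O}_X$, well understood from the smoothness of $X$ implicit in the paper's setup, back to $A_X$ through standard lifting results for filtered algebras.

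For item (1), I would invoke the lifting theorem of Bj\"ork for filtered algebras, as recalled in \cite{KS} Section 2.3: if the associated graded of a filtered Noetherian algebra is commutative Noetherian and regular of finite global dimension, then the filtered algebra itself is Auslander-regular and Cohen-Macaulay. Smoothness of $X$ makes $\mathcal{O}_X$ Noetherian and regular, and these properties are preserved by the polynomial extension $\mathcal{O}_X[\hbar] \cong \operatorname{gr} A_X$, so the lifting theorem applies directly to $A_X$.

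For item (2), I would use that a lattice $M' \subset M$ furnishes $M'$ with the $\hbar$-adic filtration, whose associated graded $\operatorname{gr} M' = M'/\hbar M'$ is by definition a finitely generated $\mathcal{O}_X$-module with scheme-theoretic support equal to $SS(M;M')$. For a finitely generated module over a finitely generated commutative $\C$-algebra, GK dimension equals the Krull dimension of the support. The standard identity $\GKdim(M') = \GKdim(\operatorname{gr} M')$ for good filtrations (see \cite{Krause-Lenagan} Ch.\ 8), combined with invariance of GK dimension under central localisation at the non-zero-divisor $\hbar$ in passing from $M'$ to $M = \hloc{(M')}$, then yields the desired equality $\GKdim(M) = \dim SS(M;M')$. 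Item (3) is then immediate as the special case $M = \hloc{A_X}$ with the tautological lattice $M' = A_X$: the reduction $A_X/\hbar \cong \mathcal{O}_X$ is supported on all of $X$, so $\GKdim(A_X) = \dim X$.

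The main obstacle is essentially a bookkeeping one: one must keep the auxiliary variable $\hbar$ distinct from the geometric directions when comparing $\operatorname{gr} A_X$ to $\mathcal{O}_X$ (so as not to double-count it in the dimension), and verify that lattices in the sense of DQ-modules furnish ``good filtrations'' in the precise technical sense required by the GK dimension machinery. Both of these points are addressed in the cited references, but stitching the two frameworks together --- DQ modules via \cite{KS} and GK dimensions via \cite{Krause-Lenagan} --- requires some care with conventions and with checking that the algebras of interest ($\DqG$ and its kin) admit a compatible choice of generators lifting generators of $\mathcal{O}_X$.
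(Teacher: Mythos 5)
Your argument is correct and is essentially the paper's own: the paper proves (1) by citing \cite[Prop.\ 2.3.11, 2.3.12]{KS}, whose content is exactly the Bj\"ork-type filtered--graded lifting you invoke, and proves (2)--(3) via the chain $\GKdim(M)=\GKdim(M'/\hbar)=\dim\supp(M'/\hbar)$, citing \cite[Prop.\ 3.2]{Lezama-Venegas} for the first equality, which packages precisely the good-filtration/central-localisation comparison you sketch (including the $\hbar$-bookkeeping you flag, since the associated graded of the $\hbar$-adic filtration is $(M'/\hbar M')[\hbar]$ rather than $M'/\hbar M'$). Item (3) is then obtained, as you say, by specialising to $M=\hloc{A_X}$ with the tautological lattice $A_X$.
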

\begin{proof}
    The fact that $A_X$ is a Auslander-regular is a direct consequence of Proposition 2.3.12 in \cite{KS}. This in combination with Proposition 2.3.11 in \cite{KS} also show the Cohen-Macaulay property. For the second statement we have $\GKdim(M) = \GKdim(M'/\hbar) = \dim \supp(M'/\hbar)=: \dim SS(M;M')$ where the first equality follows from \cite[Prop.\ 3.2]{Lezama-Venegas}. The last equality is obtained from $M=A_X$.  
\end{proof}
In particular, if $X$ is symplectic, by Proposition~\ref{prop:Gabber} and \eqref{eq:half-dim} a finitely generated $\hloc{A_X}$-module $M$ is holonomic if it satisfies $\GKdim(M) = \frac{1}{2}\dim(X)$ or equivalently $j(M) = \frac{1}{2}\dim(X)$.

\section{Internal skein algebras and modules with multiple gates}\label{sec:int-sk}

When we consider skein modules with multiple boundary components, and in particular when we consider the transfer bimodules for attaching separating 2-handles, we will require internal skein algebras for surfaces and 3-manifolds with multiple gates.

We end up with a generalisation of the handle-and-comb decomposition from \cite{BZBJ18a}, the details of which enter into the construction of 2-handle attaching maps in a crucial way.

\subsection{Internal skein algebras}
We now will introduce internal skein algebras which provide a kind of Barr-Beck reconstruction of skein categories, as categories of modules for algebras internal to $\mathcal{A}$. Topologically, internal skein algebras can be thought as generalisations of the ordinary skein algebra which allow skeins to end along a specified number of disks, so-called \textit{gates}, on the boundaries of the cylinder. 

The 2-disk $\mathbb{D}$ is an $\mathbb{E}_2$-algebra and the annulus $\Ann$ is an $\mathbb{E}_1$-algebra. The embedding $\mathbb{D}\hookrightarrow \Ann$ is an $\mathbb{E}_1$-algebra map. There is an obvious anti-involution by flipping $\Ann\rightarrow -\Ann$ which exhibits the $\mathbb{E}_1$-algebra isomorphism $\Ann \cong -\Ann^\mathrm{op}$.

Let $\Sigma$ be a surface with possibly non-empty boundary $\partial \Sigma$. Let $J = |\pi_0(\partial \Sigma)|$ denote the number of boundary components. In particular, $\Sigma$ inherits a canonical right $\Ann_{\partial \Sigma}$-module structure where $\Ann_{\partial \Sigma}$ is the $\mathbb{E}_1$-algebra obtained by
\begin{equation*}
    \Ann_{\partial \Sigma} := \Ann^{\amalg J}~.
\end{equation*}
A finite collection of disjoint disk embeddings $P = (P_1,\dots, P_n):  \bigcup_{i=1,\dots,n}\mathbb{D}_i\hookrightarrow \Ann_{\partial \Sigma}$ is an $\mathbb{E}_1$-algebra map and thus induces a right $\mathbb{D}_P $-module structure on $\Sigma$ where
\begin{equation*}
    \mathbb{D}_P := \bigcup_{i =1,\dots, n} \mathbb{D}_i~.
\end{equation*}
Factorisation homology thus induces a right $\mathcal{A}_P$-module structure on the skein category $\skcat_\mathcal{A}(\Sigma)$ where 
\begin{equation*}
    \mathcal{A}_P :=  \mathcal{A}^{\boxtimes n}~ = \int_{\mathbb{D}_P}{\mathcal{A}}~.
\end{equation*}
We refer to the disk embeddings $P_i$ above as \textbf{gates} for the surface $\Sigma$ and thus $n$ is the number of gates. Note that if $P$ is an empty collection, thus $n=0$, we have $\mathcal{A}_\emptyset = \Mod_\field$.

\begin{definition}
    Let $\Sigma$ be a surface and $P= (P_1,\dots, P_n)$ be a finite collection of gates. The $P$-\textbf{internal skein algebra} of $\Sigma$ is defined as the internal endomorphism algebra of $\skcat_\mathcal{A}(\Sigma)$ as a right $\mathcal{A}_P$-module category, \ie 
    \begin{equation*}
        \intskalg_{\mathcal{A},P}(\Sigma) := \underline{\End}_{\skcat(\Sigma)}(\emptyset) \in \Ahat_P~.
    \end{equation*}
\end{definition}
In terms of skeins, the $P$-internal skein algebra is obtained as a functor 
\begin{align*}
    \intskalg_{\mathcal{A},P}(\Sigma): &~\mathcal{A}_P^\mathrm{op}\rightarrow \Mod_\field\\ 
    &\boxtimes V_i \mapsto \skmod_\mathcal{A}(\Sigma\times [0,1], P(\boxtimes_i V_i), \emptyset) \nonumber
\end{align*}
thus allowing skeins to end on gates of the incoming boundary $\Sigma\times \{0\}$. 
When the ribbon category $\mathcal{A}$ is clear from context, we will denote the $P$-internal skein algebra by 
\begin{equation*}
    \A_{\Sigma, P} \equiv \intskalg_{\mathcal{A},P}(\Sigma)~. 
\end{equation*}
For $P = \emptyset$ we retrieve the ordinary skein algebra
\begin{equation*}
    \A_{\Sigma,\emptyset} = \skalg_\mathcal{A}(\Sigma)
\end{equation*}
and when $P:\mathbb{D} \rightarrow \Ann_{\partial \Sigma}$ is a single gate we obtain the internal skein algebra treated in \cite{GJS}. 

The following theorem exhibits compatibility between internal skein algebras with disjoint unions of surfaces, forgetting gates and merging gates (gluing 1-handles). Let $\varepsilon_i: \Ahat^{\boxtimes n} \rightarrow \Ahat^{\boxtimes n-1}$ denote the monoidal functor obtained by evaluating with $\unit \in \mathcal{A}$ on the $i$-th tensorant, \ie induced by the evaluation functor $\varepsilon: \Ahat \rightarrow \Mod_\field, X \mapsto X(\unit)$.  For distinct $i<j$ in $\{1,\dots ,n\}$, let $T_{ij}: \Ahat^{\boxtimes n} \rightarrow \Ahat^{\boxtimes n-1}$ be the monoidal functor induced by tensoring the $i$-th with the $j$-component. 
It is an extension by including multiple gates of Section~5.2 and Theorem 5.14 in \cite{BZBJ18a} and the proof follows verbatim. 

\begin{theorem}\label{thm:ISA-compatibility}
Let $\Sigma$ be a surface with a (possibly empty) collection of $n$ gates $P=(P_1,\dots, P_n)$. 
\begin{enumerate}
    \item Let $\Sigma'$ be another surface with a collection of $m$ gates $P' = (P'_1, \dots, P'_m)$. Then, \[\A_{\Sigma \cup \Sigma', P\cup P'}\cong \A_{\Sigma, P}\boxtimes \A_{\Sigma', P'}\] as algebras in $\Ahat_{P\cup P'} = \Ahat_P \boxtimes \Ahat_{P'}$.
    \item Let $n\geq 1$ and $\xi_iP = (P_1, \dots, \widehat{P}_i, \dots, P_n)$ be the collection of $n-1$ gates on $\Sigma$ by forgetting the $i$-th gate $P_i$. Then, \[\A_{\Sigma, \xi_iP} \cong \varepsilon_i(\A_{\Sigma, P})\] as algebras in $\Ahat_{\xi_i P} = \Ahat^{\boxtimes n-1}$. 
    \item For two distinct gates $P_i$ and $P_j$ consider the surface $\tilde{\Sigma}$ obtained from gluing a $1$-handle on $\Sigma$ along these two gates. Let $\tilde{P}$ be the collection of $n-1$ gates which consists of $P_k$'s for all $k\neq i,j$ and of the glued 1-handle seen as a single gate $P_{ij}$. Then, 
    \[
    \A_{\tilde{\Sigma},\tilde{P}} \cong T_{ij}(\A_{\Sigma,P})
    \]
    as algebras in $\Ahat^{\boxtimes n-1}$. 
\end{enumerate}
\end{theorem}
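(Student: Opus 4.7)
My plan is to treat all three statements uniformly, by identifying in each case how the right $\mathcal{A}_P$-module category structure on $\skcat_\mathcal{A}(\Sigma)$ is modified by the geometric operation, and then reading off the change in $\underline{\End}(\emptyset) = \A_{\Sigma,P}$ via Theorem~\ref{thm:GOs}. The common input is the symmetric monoidality / excision property of $\skcat_\mathcal{A}$, which by the remark in Section~\ref{subsec:skein-mods} corresponds to factorisation-homology excision. In each case the argument is the multi-gate adaptation of the handle-and-comb decomposition from Section 5.2 and Theorem 5.14 of \cite{BZBJ18a}.

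For item (1), the symmetric monoidality of $\skcat_\mathcal{A}$ under disjoint union gives an equivalence
\begin{equation*}
\skcat_\mathcal{A}(\Sigma \cup \Sigma') \simeq \skcat_\mathcal{A}(\Sigma) \boxtimes \skcat_\mathcal{A}(\Sigma')
\end{equation*}
of right $\mathcal{A}_P \boxtimes \mathcal{A}_{P'}$-module categories, sending the empty labelling on $\Sigma \cup \Sigma'$ to $\emptyset \boxtimes \emptyset$. The internal End of a box product of progenerators in a box product of module categories is the box product of internal Ends; this follows from a routine Yoneda/adjunction check using $\Hom_{\mathcal{M} \boxtimes \mathcal{M}'}(m \boxtimes m', -)$, and yields the claimed algebra isomorphism in $\Ahat_P \boxtimes \Ahat_{P'}$.

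For item (2), the key observation is that the right $\mathcal{A}^{\boxtimes(n-1)}$-module structure corresponding to $\xi_iP$ is precisely the restriction of the $P$-structure along the monoidal inclusion $\iota_i: \mathcal{A}^{\boxtimes(n-1)} \hookrightarrow \mathcal{A}^{\boxtimes n}$ that inserts $\unit$ in the $i$-th slot; i.e.\ not labelling the $i$-th gate is the same as labelling it with $\unit$. Upon passing to cocompletions, $\iota_i$ admits the right adjoint $\varepsilon_i$, which evaluates a presheaf at $\unit$ in the $i$-th slot. Since the internal End under a restriction of action is computed by post-composing with this right adjoint, we obtain $\A_{\Sigma,\xi_iP} \cong \varepsilon_i(\A_{\Sigma,P})$ as algebras in $\Ahat^{\boxtimes(n-1)}$.

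For item (3), the most delicate step, I would apply excision to decompose $\tilde{\Sigma}$ as $\Sigma$ with a 1-handle attached along the two gate disks $P_i$ and $P_j$, with the new gate $P_{ij}$ placed on that handle. The geometric content is that a label $V \in \mathcal{A}$ sitting on $P_{ij}$ propagates along the 1-handle into both the $i$-th and $j$-th old gates, with the propagation organised by the tensor product $\otimes: \mathcal{A}^{\boxtimes 2} \to \mathcal{A}$ applied in those two slots. By Day convolution and free cocompletion, the induced functor on algebras in $\Ahat^{\boxtimes n}$ is exactly $T_{ij}$, and hence $\A_{\tilde\Sigma,\tilde P} \cong T_{ij}(\A_{\Sigma,P})$. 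The main obstacle here is to verify rigorously that the excision produces the bare tensor functor $T_{ij}$ rather than a richer coend construction involving the right adjoint $T^R$ (which would instead compute a braided function algebra contribution and correspond to a different topological operation, such as attaching a genuine handlebody rather than a single 1-handle). This verification is the content of the handle-and-comb argument of \cite{BZBJ18a}, which carries over verbatim once the action data at the remaining untouched gates is kept track of at each step.
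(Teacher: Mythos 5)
Your proof is correct and takes essentially the same approach as the paper, which likewise defers the key monadic/excision argument to the handle-and-comb decomposition of [BZBJ18a] and records the multi-gate extension as following verbatim. Your explicit identifications in parts (1) and (2) (internal End of a box product, and internal End under restriction of action along $\iota_i$ being $\iota_i^* = \varepsilon_i$ applied to the original) are exactly the right way to make the BZBJ-style argument precise in the multi-gate setting, and your flagged concern in part (3) about $T_{ij}$ vs.\ a coend involving $T^R$ is indeed the crux that the cited excision argument resolves.
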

For example, the disk $\mathbb{D}$ with a single gate $P$ has a trivial internal skein algebra
\[
\A_{\mathbb{D}} \cong \widehat{\unit} = \Hom_\mathcal{A}(\mbox{-},\unit)\in \Ahat
\]
while the disk with two gates $P=(P_1,P_2)$ is 
\[
\A_{\mathbb{D}, P} \cong \OFRT = T^{R}(\unit) \in \Ahat^{\boxtimes 2}~.
\]
In particular, applying part 3 of Theorem~\ref{thm:ISA-compatibility} by merging the two gates $P_1, P_2$ on $\mathbb{D}$ along a 1-handle gives
\begin{equation*}
    \A_{\Ann} \cong T_{12}(\A_{\mathbb{D},P}) = T(T^R(\unit)) = \OA~.
\end{equation*}
\begin{remark}
More generally, Theorem~\ref{thm:ISA-compatibility} implies a formula for the internal skein algebra over any surface from the internal skein algebra of the disk $\mathbb{D}$ with two markings. Namely, any connected surface $\Sigma$ with non-empty boundary can be obtained by 1-handle attachments on $\mathbb{D}$. In particular, for a given system of gates $\A_{\Sigma,P}$ is obtained by multiple applications of part 3 on $\A_{\mathbb{D}, P'}$ for a suitable gate datum $ P'$ on $\mathbb{D}$ and a specified gluing pattern. 
\end{remark}

We introduce the following notational abbreviations: For $g,r \in \mathbb{Z}_{\geq 0}$
let $\Sigma_{g,r}^\ast$ denote the standard genus $g$ surface with $r$ boundary components $\Sigma_{g,r}$ equipped with a single gate on each boundary component. 
We denote the associated internal skein algebra 
\[\A_{g,r}:= \A_{\Sigma_{g,r}^\ast} \in \Ahat^{\boxtimes r}\] 

\begin{corollary}\label{cor:ISA-Sgb}
    Let $\Sigma_{g,r}$ be the genus $g$-surface with $r$ boundary components and a single gate $P$. Then
\[
    \A_{\Sigma_{g,r}; P} \cong \A_{g,1}^{\totimes g}\totimes\OA^{\totimes r -1}\in \Ahat
    \]
    as algebras. 
\end{corollary}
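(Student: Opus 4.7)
The strategy is to decompose $\Sigma_{g,r}$ with its single gate $P$ as an iterated boundary connected sum of elementary pieces, and to apply Theorem~\ref{thm:ISA-compatibility} at each step. I would proceed by two parallel inductions, reducing first to the $r=1$ case and then to the $g=1$ case, with the building blocks being $\A_{1,1}$ (torus with hole, one gate) and $\A_{\Ann}\cong \OA$ (annulus with one gate on a single boundary circle, as computed in the discussion immediately after Theorem~\ref{thm:ISA-compatibility}).

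Stage 1 (induction on $r$, reducing to a single boundary). The surface $\Sigma_{g,r}$ can be realized as the boundary connected sum of $\Sigma_{g,r-1}$ with an annulus $\Sigma_{0,2}$: take the disjoint union of $\Sigma_{g,r-1}$ (equipped with a gate $P'$ on one of its boundary components) and an annulus $\Ann$ carrying a single gate on one of its two boundary circles, and glue a $1$-handle identifying these two gates. The resulting surface has genus $g$ and $(r-1)+2-1=r$ boundary components, with the attached $1$-handle serving as the remaining single gate. By Theorem~\ref{thm:ISA-compatibility}(1), the disjoint-union algebra is $\A_{\Sigma_{g,r-1};P'}\boxtimes \A_{\Ann} \cong \A_{\Sigma_{g,r-1};P'}\boxtimes \OA$. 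Theorem~\ref{thm:ISA-compatibility}(3) then gives
\[ \A_{\Sigma_{g,r};P} \;\cong\; T_{12}\bigl(\A_{\Sigma_{g,r-1};P'}\boxtimes \OA\bigr) \;\cong\; \A_{\Sigma_{g,r-1};P'}\,\totimes\,\OA, \]
where $\totimes$ denotes the resulting twisted tensor product in $\Ahat$ induced by Day convolution. Iterating $r-1$ times yields
\[ \A_{\Sigma_{g,r};P} \;\cong\; \A_{g,1}\,\totimes\,\OA^{\totimes(r-1)}. \]

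Stage 2 (induction on $g$, the handle decomposition). The surface $\Sigma_{g,1}$ is itself the boundary connected sum of $\Sigma_{g-1,1}$ with $\Sigma_{1,1}$: take the disjoint union of the two, each carrying its single gate, and glue a $1$-handle between these gates. Applying Theorem~\ref{thm:ISA-compatibility}(1) and~(3) exactly as in Stage 1 produces $\A_{g,1}\cong \A_{g-1,1}\totimes \A_{1,1}$, and iterating yields $\A_{g,1}\cong \A_{1,1}^{\totimes g}$. Substituting into Stage 1 yields the claimed presentation.

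The only care required is ensuring that the iterated applications of $T_{12}$ are grouped consistently, which follows automatically from the coherent associativity of the Day-convolution monoidal structure on $\Ahat$ inherited from $\mathcal{A}$. Consequently there is no substantive obstacle beyond invoking the compatibility theorem repeatedly, and the corollary becomes a direct structural consequence of Theorem~\ref{thm:ISA-compatibility}, parallel in spirit to the handle-and-comb decomposition of \cite{BZBJ18a} extended to allow multiple boundary components with a single gate.
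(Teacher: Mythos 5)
Your proof is correct and is essentially an unpacking of the paper's one-line argument, which invokes the same idea: apply Theorem~\ref{thm:ISA-compatibility} iteratively across a handle-and-comb decomposition (Figure~\ref{fig:handle-and-comb-single-gate}), producing one $\A_{1,1}$ factor per genus handle and one $\OA$ factor per extra boundary comb. You also implicitly correct what appears to be a typo in the statement, since $\A_{g,1}^{\totimes g}$ should read $\A_{1,1}^{\totimes g}$ (as $\A_{g,1}$ already equals $\A_{1,1}^{\totimes g}$, and the dimension count requires $\A_{1,1}^{\totimes g}\totimes\OA^{\totimes r-1}$ to quantize $G^{2g+r-1}$).
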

\begin{proof}
    This follows directly from Theorem~\ref{thm:ISA-compatibility} by picking a handle and comb decomposition of $\Sigma_{g,r}$, see Figure~\ref{fig:handle-and-comb-single-gate}.
\end{proof}

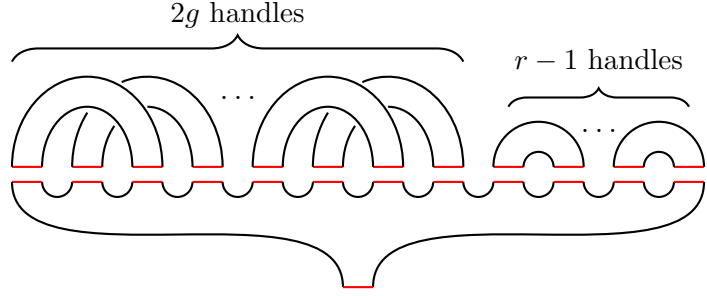
\begin{figure}
    \centering
    \begin{tikzpicture}[scale=0.4, thick]
\foreach \i in {0,2,...,22} {
  \draw[red] (\i,4) -- (\i+1,4);  
  \draw[red] (\i,3.5) -- (\i+1,3.5);  
  \ifthenelse{\i <22}{\draw (\i+1,3.5) arc (-180:0:0.5 and 0.5)}{};
}
\draw (0,3.5) to[out=270,in=90,looseness=0.8] (11,0);
\draw (23,3.5) to[out=270,in=90,looseness=0.8] (12,0);
\draw[red] (11,0) -- (12,0);
\foreach \i in {0,8} {
\draw (\i,4) arc (180:0:2.5 and 3);
\draw (\i+1,4) arc (180:0:1.5 and 2);
\draw (\i+2,4) arc (180:143:2.5 and 3);
\draw (\i+7,4) arc (0:110:2.5 and 3);
\draw (\i+3,4) arc (180:137:1.5 and 2);
\draw (\i+6,4) arc (0:90:1.5 and 2);
}
\foreach \i in {16,20} {
\draw (\i,4) arc (180:0:1.5 and 1.5);
\draw (\i+1,4) arc (180:0:0.5 and 0.5);
}
\node at (7.6,6.3) {$\dots$};
\draw [decorate,decoration={brace,amplitude=10pt}]
  (0,7.5) -- (15,7.5) node[midway,yshift=1.5em]{\small $2g$ handles};
  \node at (19.6,5.2) {$\dots$};
\draw [decorate,decoration={brace,amplitude=7pt}]
  (16.5,6) -- (22.5,6) node[midway,yshift=1.5em]{\small $r-1$ handles};
\end{tikzpicture}
    \caption{The handle and comb presentation of the surface $\Sigma_{g,r}$ with a single gate.}
    \label{fig:handle-and-comb-single-gate}
\end{figure}

\begin{figure}
    \centering
    \begin{tikzpicture}[scale=0.4, thick]
\foreach \i in {0,2,...,22} {
  \draw[red] (\i,4) -- (\i+1,4);  
  \draw[red] (\i,3.5) -- (\i+1,3.5);  
  \ifthenelse{\i < 7}{\draw (\i+1,3.5) arc (-180:0:0.5 and 0.5)}{};
  \ifthenelse{\i > 7 \AND \i <22\AND \NOT \i = 12}{\draw[Myblue] (\i+1,3.5) arc (-180:0:0.5 and 0.5)}{};
}
\draw (0,3.5) to[out=270,in=90,looseness=0.8] (6,0);
\draw (13,3.5) to[out=270,in=90,looseness=0.8] (7,0);
\draw[red] (6,0) -- (7,0);
\draw[Myblue] (14,3.5) to[out=270,in=90,looseness=0.8] (18,0);
\draw[Myblue] (23,3.5) to[out=270,in=90,looseness=0.8] (19,0);
\draw[red] (18,0) -- (19,0);
\foreach \i in {0,8,16} {
\ifthenelse{\i <16}{\draw}{\draw[Myblue]} (\i,4) arc (180:0:2.5 and 3);
\ifthenelse{\i <8}{\draw}{\draw[Myblue]} (\i+1,4) arc (180:0:1.5 and 2);
\ifthenelse{\i <8}{\draw}{\draw[Myblue]} (\i+2,4) arc (180:143:2.5 and 3);
\ifthenelse{\i <8}{\draw}{\draw[Myblue]} (\i+7,4) arc (0:110:2.5 and 3);
\ifthenelse{\i <8}{\draw}{\draw[Myblue]} (\i+3,4) arc (180:137:1.5 and 2);
\ifthenelse{\i <8}{\draw}{\draw[Myblue]} (\i+6,4) arc (0:90:1.5 and 2);
}
\end{tikzpicture}
    \caption{A handle and comb presentation of the surface $\Sigma_{g=2,r=2}$ with a single gate on each boundary component. The two boundary circles are differentiated by colour.}
    \label{fig:handle-and-comb-multi-gates}
\end{figure}
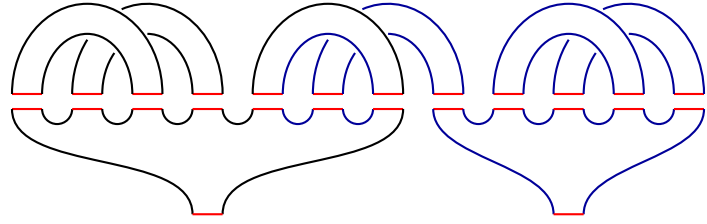

The algebra $\A_{1,1}$ of the once-punctured torus with one gate is obtained by the smash product of the annular $\OA$
\begin{equation}\label{eq:ISA-torus}
\D_\mathcal{A}:= \A_{1,1} \cong \OA\sharp \OA
\end{equation}

\begin{corollary}\label{cor:ISA-separating}
Let $\Sigma_{g,r}$ be the genus $g$-surface with $r$ boundary components with $P$ one gate on each boundary component. This induces an algebra isomorphism
\[
\A_{\Sigma_{g,r}^\ast} = \A_{g,r} \cong \A_{1,1}^{\totimes g}\totimes\A_{0,2}^{\totimes r-1} \in \Ahat^{\boxtimes r}.
\]
\end{corollary}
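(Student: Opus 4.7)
The plan is to extend the handle-and-comb decomposition that gave Corollary~\ref{cor:ISA-Sgb} to the multi-gate setting, generalising the picture in Figure~\ref{fig:handle-and-comb-multi-gates} from $(g,r)=(2,2)$ to arbitrary $(g,r)$. First, I would realise $\Sigma_{g,r}^\ast$ as a sequence of 1-handle attachments starting from the disjoint union of $g$ copies of the once-punctured torus $\Sigma_{1,1}^\ast$ and $r-1$ copies of the twice-gated annulus $\Sigma_{0,2}^\ast$. Concretely, I would glue the $g$ tori into a connected surface $\Sigma_{g,1}^\ast$ via $g-1$ handle attachments (each merging the unique gates of two of the pieces), and then attach the $r-1$ annuli one at a time, fusing one gate of each annulus with an existing gate of the partial surface while leaving the other gate as a new boundary-component gate. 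A direct count confirms that after these $g+r-2$ attachments the genus is $g$, the number of boundary circles is $r$, and the number of remaining gates is $g+2(r-1)-(g+r-2)=r$, one per boundary component, as required.

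The second step is to translate this topological decomposition into algebra via Theorem~\ref{thm:ISA-compatibility}. By part (1) applied to the disjoint union of the building blocks, the internal skein algebra before any gluing is
\[
\A_{1,1}^{\boxtimes g}\boxtimes \A_{0,2}^{\boxtimes (r-1)} \in \Ahat^{\boxtimes(g+2(r-1))}.
\]
Each subsequent 1-handle attachment along two specified gates is, by part (3), implemented by the monoidal functor $T_{ij}$, which identifies the corresponding pair of tensorands and produces a factor of braided-product type $\totimes$ in their place. Iterating this $g+r-2$ times collapses the $\boxtimes$-products into $\totimes$-products in precisely the order in which the handles are attached, and the result lives in $\Ahat^{\boxtimes r}$. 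This yields the desired algebra isomorphism
\[
\A_{g,r} \cong \A_{1,1}^{\totimes g}\totimes \A_{0,2}^{\totimes(r-1)}.
\]

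The main obstacle is bookkeeping: one must verify that the chosen gluing pattern produces exactly the $\totimes$-product structure appearing in the statement, rather than some reordered or twisted variant. Concretely, this amounts to showing that the canonical handle-and-comb order of attachments (tori first, then annuli attached one-by-one to the evolving boundary) induces the correct sequence of $T_{ij}$-contractions and that the enumeration of the $r$ surviving gates matches the natural enumeration of the boundary components of $\Sigma_{g,r}$. Since the twisted tensor product $\totimes$ is defined precisely by this gluing-pattern prescription (as stated in the introduction), this verification should follow from a careful but routine inspection of the disk-embedding data; no further analytic input beyond Theorem~\ref{thm:ISA-compatibility} is needed.
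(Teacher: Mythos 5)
Your proof is correct and follows the same basic strategy as the paper, which is to apply Theorem~\ref{thm:ISA-compatibility} to a suitable decomposition of $\Sigma_{g,r}^\ast$ (the paper cites a handle-and-comb picture as in Figure~\ref{fig:handle-and-comb-multi-gates}, while you build from the disjoint union of $g$ copies of $\Sigma_{1,1}^\ast$ and $r-1$ copies of $\Sigma_{0,2}^\ast$ fused by $1$-handles, which is a minor reorganisation). Your version has the small advantage of producing the $\A_{1,1}$ and $\A_{0,2}$ factors directly as inputs rather than recognizing them a posteriori in the handle pattern, and the Euler characteristic / gate count verification you include is exactly what is needed to justify the topological step; the remaining ``bookkeeping'' you flag is benign, since $\totimes$ is \emph{defined} by the gluing pattern of the chosen decomposition.
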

\begin{proof}
    This follows from applying Theorem~\ref{thm:ISA-compatibility} to a handle-and-comb decomposition which includes $r$ many combs (see Figure~\ref{fig:handle-and-comb-multi-gates}).
\end{proof}
The algebra associated to the annulus with one gate on each of the two boundary disks is the smash product 
\begin{equation}\label{eq:ISA-ann}
    \D'_{\mathcal{A}}:= \A_{0,2} \cong \OA \sharp \OFRT~.
\end{equation}
In particular, applying the tensor product we obtain $T(\D'_\mathcal{A}) = \D_\mathcal{A}$ as algebras in $\Ahat$. 

\begin{example}\label{ex:quantization}
For our main example, $\mathcal{A} =\Rep_q^\mathrm{fd} G$, $\D_\mathcal{A}= \OqG \sharp \OqG$ and $\D'_\mathcal{A} \cong \OqG \sharp \OqFRT$ have appeared many times in the literature as algebras of $q$-difference operators on $G$.  We use the notation $\DqG$ and $\Dq'(G)$, respectively, in this case to recall this appearance. An explicit presentation will be given for $G= \SL_2,\GL_2$ in Section~\ref{subsec:GL2-background}. 

The internal skein algebra of $\Sigma_{g,r}^\ast$ is a deformation quantization algebra of the Fock-Rosly Poisson structure on the moduli space of the space of flat connections on $\Sigma$ equipped with framings at each gate (see \cite{Fock-Rosly,AGS} for more details).   
\end{example}

Finally, modules over internal skein algebras reconstruct the value of skein theory on the associated surface: 
\begin{proposition}\label{prop:ISA-GOs}
Let $\Sigma$ be a possibly disconnected surface whose connected components have non-empty boundary. Consider a collection $P$ of gates on $\Sigma$ with at least one gate on each of the connected components. 
    Then we have an equivalence of right $\mathcal{A}_P$-module categories, 
    \[
    \skhat_{\mathcal{A}}(\Sigma) \simeq \A_{\Sigma,P}\modu(\Ahat_P)~.
    \]
\end{proposition}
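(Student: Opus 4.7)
My plan is to apply the monadic reconstruction in Theorem~\ref{thm:GOs} to the right $\mathcal{A}_P$-module category $\skcat_\mathcal{A}(\Sigma)$, with the empty labelling $\emptyset$ as the candidate progenerator. By definition one has $\underline{\End}_{\skcat_\mathcal{A}(\Sigma)}(\emptyset) = \A_{\Sigma,P}$, so once $\emptyset$ is verified to be an $\mathcal{A}_P$-progenerator the theorem immediately delivers
\[
\skhat_\mathcal{A}(\Sigma) \simeq \A_{\Sigma,P}\modu(\Ahat_P)
\]
as right $\mathcal{A}_P$-module categories.

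First I would spell out how $\skcat_\mathcal{A}(\Sigma)$ inherits its right $\mathcal{A}_P$-action from the gate data: the disk embedding $P\colon \mathbb{D}_P \hookrightarrow \Ann_{\partial\Sigma}$ is a map of $\mathbb{E}_1$-algebras, and factorisation homology converts it into an action of $\mathcal{A}_P = \mathcal{A}^{\boxtimes n}$ on $\skcat_\mathcal{A}(\Sigma)$ that inserts labels into the corresponding gates. Concretely, for $X = X_1 \boxtimes \cdots \boxtimes X_n \in \mathcal{A}_P$ the object $\emptyset \triangleleft X$ is the labelling with a single $X_i$-coloured framed point inside each gate $P_i$.

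The crux is then to show that $\emptyset$ generates $\skcat_\mathcal{A}(\Sigma)$ as an $\mathcal{A}_P$-module. Let $L$ be an arbitrary labelling on $\partial\Sigma$. Since every connected component of $\Sigma$ contains at least one gate, there is an ambient isotopy of $\partial\Sigma$ collecting all framed points of $L$ into the gates; grouping the points that land inside a single gate via the monoidal structure on that $\mathcal{A}$-factor produces an object $X_L \in \mathcal{A}_P$. The trace of the isotopy is a skein in $\Sigma \times [0,1]$ providing an isomorphism $L \cong \emptyset \triangleleft X_L$ in $\skcat_\mathcal{A}(\Sigma)$. Together with the fact that $(-)\triangleleft(-)$ is exact and cocontinuous in the second variable, this shows $\emptyset$ is an $\mathcal{A}_P$-progenerator, and Theorem~\ref{thm:GOs} concludes the proof.

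The main obstacle I anticipate is the bookkeeping for multi-gate situations: one must argue that framed points in each component can actually be isotoped into that component's prescribed gate, that grouping them into a tensor product within a single gate is compatible with the monoidal structure of the corresponding $\mathcal{A}$-factor of $\mathcal{A}_P$, and that the resulting skein is genuinely an isomorphism (i.e.\ it has an inverse, given by the reversed isotopy). A convenient simplification is to first reduce to the connected case via part~(1) of Theorem~\ref{thm:ISA-compatibility}, after which one only needs to handle a single connected component with possibly several gates, where the above isotopy argument becomes elementary.
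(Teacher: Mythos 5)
Your proposal is correct and follows essentially the same route as the paper: the paper's proof likewise takes the empty labelling $\emptyset$, observes it is an $\mathcal{A}_P$-progenerator precisely because each connected component carries at least one gate, and applies Theorem~\ref{thm:GOs}, with $\underline{\End}(\emptyset)=\A_{\Sigma,P}$ by definition. The isotopy argument you sketch for generation is exactly the content the paper leaves implicit in the phrase ``it is an $\mathcal{A}_P$-progenerator as we have at least one gate at each connected component.''
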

\begin{proof}
Let $\emptyset \in \sk_{\mathcal{A}}(\Sigma)$ be the distinguished object, \ie the empty $\mathcal{A}$-labelling on $\Sigma$. It is an $\mathcal{A}_P$-progenerator with respect to the right $\mathcal{A}_P$-module structure as we have at least one gate at each connected component. The equivalence then follows by a direct application of Theorem~\ref{thm:GOs}. 
\end{proof}

The following is obtained using \eqref{eq:ISA-ann} and Definition~\ref{def:HC}. 
\begin{corollary}
    Skeins on the annulus recover the Harish-Chandra category 
    \[\skhat_\mathcal{A}(\Ann) \simeq \HC{\mathcal{A}}~.\]
\end{corollary}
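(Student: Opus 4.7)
The plan is a direct application of Proposition~\ref{prop:ISA-GOs}, for which one only needs to fix a convenient gate datum on $\Ann$ and compute the resulting internal skein algebra.

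I would place a single gate $P$ on one of the two boundary circles of $\Ann$; this satisfies the hypothesis of Proposition~\ref{prop:ISA-GOs} since $\Ann$ is connected. As computed immediately after Theorem~\ref{thm:ISA-compatibility}, the associated internal skein algebra is
\[\A_{\Ann} \cong T_{12}\bigl(\A_{\mathbb{D},(P_1,P_2)}\bigr) = T\bigl(T^R(\unit)\bigr) = \OA,\]
obtained by applying part 3 of Theorem~\ref{thm:ISA-compatibility} (merging two gates along a 1-handle) to the two-gated disk, whose internal skein algebra is $\OFRT = T^R(\unit)$. Proposition~\ref{prop:ISA-GOs} then yields
\[\skhat_\mathcal{A}(\Ann) \simeq \OA\modu(\Ahat) = \HC{\mathcal{A}},\]
the last equality being Definition~\ref{def:HC}.

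Alternatively, one may begin from the two-gate annulus $\Sigma_{0,2}^{\ast}$, whose internal skein algebra $\A_{0,2} = \OA \sharp \OFRT \in \Ahat^{\boxtimes 2}$ is recorded in \eqref{eq:ISA-ann}. Proposition~\ref{prop:ISA-GOs} would then yield $\skhat_\mathcal{A}(\Ann) \simeq (\OA \sharp \OFRT)\modu(\Ahat^{\boxtimes 2})$, and one would have to absorb the $\OFRT$-factor using the adjunction $T \dashv T^R$ (which identifies $\OFRT$-modules in $\Ahat^{\boxtimes 2}$ with objects of $\Ahat$) to arrive at $\OA\modu(\Ahat)$. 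That absorption step is the only piece of genuine content along this route; the one-gate approach avoids it entirely and should be preferred. In either case there is no real obstacle, since both internal skein algebras reconstruct the same cocompleted skein category via Proposition~\ref{prop:ISA-GOs}, and the equivalence with $\HC{\mathcal{A}}$ reduces to a bookkeeping identification.
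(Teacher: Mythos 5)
Your proposal is correct, and the one-gate route you lead with is the cleanest path: $\A_{\Ann}\cong\OA$ was computed just above the corollary, so Proposition~\ref{prop:ISA-GOs} together with Definition~\ref{def:HC} gives the result immediately, with no further content. The paper's one-line justification cites \eqref{eq:ISA-ann} (the two-gate presentation $\A_{0,2}\cong\OA\sharp\OFRT$) rather than the unnumbered one-gate computation, which corresponds to your alternative route and would indeed require the $\OFRT$-absorption step you identify; either way, the argument is essentially as you describe and both routes are covered in your write-up.
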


Returning to a general surface $\Sigma$ with boundary $\partial \Sigma$, let $b_0 \in \pi_0(\partial \Sigma)$ be a fixed boundary component. Then, $\sk_{\mathcal{A}}(\Sigma)$ is naturally a right $\sk_{\mathcal{A}}(\Ann)$-module category by the right $\Ann$-action on $ \Sigma$ through the boundary component $b_0$.  In particular, $\skhat_{\mathcal{A}}(\Sigma)$ is a right $\HC{\mathcal{A}}$-module category and thus the algebra $\A_{\Sigma, P}$ associated to a single gate $P$ on the boundary $b_0$ inherits a quantum moment map
\[
\mu: \OA \rightarrow \A_{\Sigma,P}~.
\]
Restricting $\A_{\Sigma,P}$-modules to strongly equivariant modules with respect to the above quantum moment map recovers skein theory on the surface obtained by capping the $b_0$ boundary of $\Sigma$ with a disk: 
\begin{proposition}
    Let $\Sigma(b_0) = \Sigma \cup_{b_0} \mathbb{D}$ be the surface obtained by capping the $b_0$- boundary circle of $\Sigma$ with a disk. There is an equivalence
    \begin{equation*}
        \skhat_\mathcal{A}(\Sigma(b_0)) \simeq \A_{\Sigma,P}\modu(\Ahat)^\mathrm{str}~.
    \end{equation*}
\end{proposition}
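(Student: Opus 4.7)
The plan is to combine Proposition~\ref{prop:ISA-GOs} with the gluing (excision) property of the cocompleted skein TQFT $\skhat_\mathcal{A}$.  Capping the boundary circle $b_0$ realises $\Sigma(b_0)$ as the gluing $\Sigma\cup_{b_0}\mathbb{D}$, so under the symmetric monoidal functor $\skhat_\mathcal{A}$ one obtains
\[
\skhat_\mathcal{A}(\Sigma(b_0))\simeq \skhat_\mathcal{A}(\Sigma)\boxtimes_{\skhat_\mathcal{A}(\Ann)}\skhat_\mathcal{A}(\mathbb{D})~.
\]
The gluing category is $\HC{\mathcal{A}}$ by the corollary immediately preceding the statement, and $\skhat_\mathcal{A}(\mathbb{D})\simeq \Ahat$ (since the skein category of the disk is $\mathcal{A}$).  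The pertinent left $\HC{\mathcal{A}}$-module structure on $\Ahat$ is the one in which $\OA$ acts on each $X\in\Ahat$ through the counit $\epsilon:\OA\to \widehat{\unit}$; topologically, an annular skein pushed into the disk can be freely isotoped into its interior, where it reduces to its underlying label.

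Next, I would apply Proposition~\ref{prop:ISA-GOs} to $\Sigma$ with the single gate $P$ on $b_0$ to identify $\skhat_\mathcal{A}(\Sigma)\simeq \A_{\Sigma,P}\modu(\Ahat)$ as right $\HC{\mathcal{A}}$-module categories.  Under this equivalence the topological right $\HC{\mathcal{A}}$-action (from the annular collar of $b_0$) corresponds, as reviewed in Section~\ref{subsec:bfa-moment}, to the action $M\otimes_{\OA}(-)$ along the quantum moment map $\mu:\OA\to \A_{\Sigma,P}$.  Substituting yields
\[
\skhat_\mathcal{A}(\Sigma(b_0))\simeq \A_{\Sigma,P}\modu(\Ahat)\boxtimes_{\HC{\mathcal{A}}}\Ahat~.
\]

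The final algebraic step is to match this relative tensor product with $\A_{\Sigma,P}\modu(\Ahat)^\mathrm{str}$.  The candidate equivalence is $M\mapsto M\otimes_{\OA}\widehat{\unit}$, i.e.\ the $\OA$-coinvariants of $M$ for the action via $\mu$; the result inherits the residual $\A_{\Sigma,P}$-action and is strongly equivariant by construction.  Conversely, every strongly equivariant $N$ satisfies $N\otimes_{\OA}\widehat{\unit}\cong N$ canonically, so the functor restricts to the identity on strongly equivariant modules and is essentially surjective.  Full faithfulness follows from the standard observation that $(-)\otimes_{\OA}\widehat{\unit}$ is left adjoint to the fully faithful inclusion $\A_{\Sigma,P}\modu(\Ahat)^\mathrm{str}\hookrightarrow \A_{\Sigma,P}\modu(\Ahat)$, with isomorphism counit.

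The main obstacle will be making the first step precise: rigorously invoking excision for $\skhat_\mathcal{A}$ and identifying the left $\HC{\mathcal{A}}$-module structure on $\skhat_\mathcal{A}(\mathbb{D})$ with the augmentation action described above.  A clean route is to rerun the monadic reconstruction of Proposition~\ref{prop:ISA-GOs} (after \cite{BZBJ18a}) on the disk with one boundary gate, verify that the internal endomorphism algebra of the empty skein is $\widehat{\unit}$ with $\OA$ acting through $\epsilon$, and conclude $\skhat_\mathcal{A}(\mathbb{D})\simeq \widehat{\unit}\modu(\Ahat)\simeq \Ahat$ with the claimed $\HC{\mathcal{A}}$-action.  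Once this identification is in hand, the remaining steps are formal manipulations of relative tensor products.
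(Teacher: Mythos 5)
Your route---excision for $\skhat_\mathcal{A}$ at the circle $b_0$, the identifications $\skhat_\mathcal{A}(\Ann)\simeq\HC{\mathcal{A}}$ and $\skhat_\mathcal{A}(\mathbb{D})\simeq\Ahat$ with $\OA$ acting through the counit, and Proposition~\ref{prop:ISA-GOs} applied to $\Sigma$ with the gate $P$ on $b_0$---is exactly the proof of the result the paper itself invokes: the paper's ``proof'' is a citation to Proposition~2.29 of \cite{GJS} (together with the remark that closedness of $\Sigma(b_0)$ is never used), and that proof proceeds by this excision-plus-reconstruction argument. The first half of your sketch is unproblematic: excision for skein categories is Cooke's theorem \cite{Cooke-thesis,BrownHaioun}, and the compatibility of the equivalence of Proposition~\ref{prop:ISA-GOs} with the annular $\HC{\mathcal{A}}$-action via $M\otimes_{\OA}(-)$ along the quantum moment map is part of the setup of Section~\ref{subsec:bfa-moment}. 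Contrary to your closing paragraph, this is not where the real work lies.

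The genuine gap is in your final step. Full faithfulness of the functor induced on $\A_{\Sigma,P}\modu(\Ahat)\boxtimes_{\HC{\mathcal{A}}}\Ahat$ does not follow from the observation that $(-)\otimes_{\OA}\widehat{\unit}$ is left adjoint to the inclusion $\A_{\Sigma,P}\modu(\Ahat)^\mathrm{str}\hookrightarrow\A_{\Sigma,P}\modu(\Ahat)$ with invertible counit: that adjunction only exhibits $\A_{\Sigma,P}\modu(\Ahat)^\mathrm{str}$ as a reflective localisation of $\A_{\Sigma,P}\modu(\Ahat)$, whereas objects and morphisms of the relative tensor product in $\prl$ are computed by a two-sided bar construction and are not a priori objects or morphisms of $\A_{\Sigma,P}\modu(\Ahat)$ at all; so your argument never compares Hom-spaces in the correct source category. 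The standard repair---and what \cite{BZBJ18b,GJS} actually do---is to identify $\Ahat$, as a left $\HC{\mathcal{A}}$-module category, with $\widehat{\unit}\modu(\HC{\mathcal{A}})$, where $\widehat{\unit}$ is viewed as an algebra in $\HC{\mathcal{A}}$ via the augmentation $\epsilon\colon\OA\to\widehat{\unit}$; then apply the monadic (Barr--Beck) identification of \cite{BZBJ18a}, which says that tensoring a module category $\mathcal{M}$ over $\mathcal{C}$ with $B\modu(\mathcal{C})$ yields $B$-modules internal to $\mathcal{M}$; and finally check that a $\widehat{\unit}$-module structure on $M\in\A_{\Sigma,P}\modu(\Ahat)$, relative to the action $M\otimes_{\OA}(-)$ along $\mu$, forces the augmentation ideal of $\OA$ to act by zero, i.e.\ is the same datum as strong equivariance. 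Without this monadicity input (or an equivalent explicit bar-resolution computation), the step you describe as ``formal manipulations of relative tensor products'' is precisely the substantive theorem, and the proposal as written does not establish it.
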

The above proposition appears as Proposition 2.29 in \cite{GJS}. Although the proof there assumes that $\Sigma(b_0)$ is closed, this assumption is not used. 
In particular, if $\Sigma_g$ is the closed genus $g$ surface then  
\[
\skhat_{\mathcal{A}}(\Sigma_g) \simeq \A_{g,1}\modu(\Ahat)^\mathrm{str} ~.
\]
Similarly, $\A_{g,r}$ comes with a quantum moment map 
\begin{equation*}
    \mu: \OA^{\boxtimes r}=\O_{\mathcal{A}^{\boxtimes r}} \rightarrow \A_{g,r}~
\end{equation*}
and 
\[
\skhat_\mathcal{A}(\Sigma_g) \simeq \A_{g,r}\modu(\Ahat^{\boxtimes r})^{\mathrm{str}}~.
\]

\subsection{Internal skein modules}

Thus far, we have reconstructed skein theory on surfaces in terms of internal skein algebras and their modules. We now shift focus to skein theory on 3-manifolds, which naturally leads to the notion of \textit{internal skein modules}.

Let $M: \Sigma_\mathrm{in}\rightarrow \Sigma_\mathrm{out}$ be an oriented bordism between closed surfaces $\Sigma_\mathrm{in}, \Sigma_\mathrm{out}$. Skein theory assigns to $M$ the bimodule
\[
\sk_\mathcal{A}(M):\sk_\mathcal{A}(\Sigma_\mathrm{in})\opp\boxtimes \sk_\mathcal{A}(\Sigma_\mathrm{out})\rightarrow \Mod_\field~.
\]
Let $\delta: (\mathbb{D}^\circ)^{\amalg b}\hookrightarrow \Sigma_\mathrm{in}$ and $\delta': (\mathbb{D}^\circ)^{\amalg b'}\hookrightarrow \Sigma_\mathrm{out}$ be non-empty collections of disjoint open disk embeddings in $\Sigma_\mathrm{in}$ resp.\ $\Sigma_\mathrm{out}$, with at least one disk embedding on each connected component of $\Sigma_\mathrm{in}$ and $\Sigma_\mathrm{out}$. Let $\Sigma_\mathrm{in}^{\delta}$ and $\Sigma_\mathrm{out}^{\delta'}$ denote the associated complements, \ie the surface with the specified disks removed, which are surfaces with $b$ resp.\ $b'$ boundary components.  Restricting $\sk_\mathcal{A}(M)$ along $\sk_\mathcal{A}(\Sigma_\mathrm{in}^\delta) \rightarrow\sk_\mathcal{A}(\Sigma_\mathrm{in})$ and $\sk_\mathcal{A}(\Sigma_\mathrm{out}^{\delta'}) \rightarrow\sk_\mathcal{A}(\Sigma_\mathrm{out})$ gives an object in $\skhat_\mathcal{\mathcal{A}}(\Sigma_\mathrm{in}^\delta)\boxtimes\skhat_\mathcal{A}(-\Sigma_\mathrm{out}^{\delta'})$. For any gates $P_\mathrm{in}\subset\Ann_{\partial \Sigma_\mathrm{in}^\delta}$ and $P_\mathrm{out}\subset\Ann_{\partial \Sigma_\mathrm{out}^{\delta'}}$ with at least one gate for each connected component, Proposition~\ref{prop:ISA-GOs} gives
\begin{equation}\label{eq:ISA-Bimod}
 \skhat_\mathcal{\mathcal{A}}(\Sigma_\mathrm{in}^\delta)\boxtimes\skhat_\mathcal{A}(-\Sigma_\mathrm{out}^{\delta'}) \simeq \A_{\Sigma_\mathrm{in}^\delta;P_\mathrm{in}}\mbox{-}\A_{\Sigma_\mathrm{out}^{\delta'};P_\mathrm{out}}\mbox{-}\Bimod(\Ahat_{P_\mathrm{in}}\boxtimes \Ahat_{P_\mathrm{out}})~.
\end{equation}
We have used Theorem~\ref{thm:ISA-compatibility} for
\[\A_{\Sigma_\mathrm{in}^{\delta}\cup-\Sigma_{\mathrm{out}}^{\delta'};P_{\mathrm{in}}\cup P_{\mathrm{out}}}\cong \A_{\Sigma_{\mathrm{in}}^{\delta};P_{\mathrm{in}}} \boxtimes \A_{-\Sigma_{\mathrm{out}}^{\delta'};P_{\mathrm{out}}}\cong \A_{\Sigma_{\mathrm{in}}^{\delta};P_{\mathrm{in}}} \boxtimes\A_{\Sigma_{\mathrm{out}}^{\delta'};P_{\mathrm{out}}}^{\mathrm{op}}~. \]
\begin{definition}
    Let $P$ denote the datum $(\delta, \delta';P_\mathrm{in},P_\mathrm{out})$ of disk removals and gates as described above. The \textbf{$P$-internal skein bimodule} of $M$ is defined as the image of $\sk_\mathcal{A}(M)$ under the equivalence \eqref{eq:ISA-Bimod}
    \[
    \intskmod_{\mathcal{A},P}(M) \in \A_{\Sigma_\mathrm{in}^\delta;P_\mathrm{in}}\mbox{-}\A_{\Sigma_\mathrm{out}^{\delta'};P_\mathrm{out}}\mbox{-}\Bimod(\Ahat_{P_\mathrm{in}}\boxtimes \Ahat_{P_\mathrm{out}})~.
    \]
\end{definition}
Equivalently, it is a left $\A_{\Sigma_\mathrm{in}^\delta;P_\mathrm{in}}\boxtimes\A_{\Sigma_\mathrm{out}^{\delta'};P_\mathrm{out}}\opp$-module in $\Ahat_P$, where $\A_{\Sigma_\mathrm{out}^{\delta'};P_\mathrm{out}}\opp$ denotes the braided opposite algebra. 
\begin{remark}
    One can also allow empty gates on the boundary of $M$. This will lead to the ordinary skein (bi)module $\skmod_\mathcal{A}(M)$ over the ordinary skein algebras $\skalg_\mathcal{\mathcal{A}}(\Sigma_\mathrm{in})$ and $\skalg_\mathcal{A}(\Sigma_\mathrm{out})$. 
\end{remark}

By construction, internal skein modules constructed from bordisms between closed surfaces will be strongly equivariant with respect to the quantum moment maps arising on the boundaries of the surfaces with discs removed. 

Cutting a bordism along a surface is compatible with internal skein bimodules as long as we keep at least one boundary in each connected component of that surface. This is formulated in the following proposition and corollary which are a mild generalisation of \cite[Thm.\ 4.1]{GJS}:

\begin{proposition}
    Let $M: \Sigma_1\rightarrow \Sigma_2$ and $N: \Sigma_2\rightarrow \Sigma_3$ be composable bordisms and let $(\delta_i; P_i)$ be the data of disk removals and gates on $\Sigma_i$. If $P_2$ has at least one gate on each connected component of $\Sigma_2^{\delta_2}$, then there is an isomorphism of $\left(\A_{\Sigma_1^{\delta_1},P_1},A_{\Sigma_3^{\delta_3}, P_3}\right)$-bimodules: 
    \begin{equation*}
        \intskmod_{\mathcal{A},(P_1,P_3)}(N\circ M) \cong \Hom_{\Ahat_{P_2}}\left(\unit,\intskmod_{\mathcal{A},(P_1, P_2)}(M)\otimes_{\A_{\Sigma_2^{\delta_2}, P_2}} \intskmod_{\mathcal{A},(P_2, P_3)}(N)\right)~.
    \end{equation*}
\end{proposition}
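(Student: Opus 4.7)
The plan is to combine three ingredients established earlier in the paper: (i) the TQFT-functoriality of skein theory as a symmetric monoidal functor $\sk_\mathcal{A}:\operatorname{Bord}_{3,2}^{\mathrm{or}}\to\operatorname{Bimod}$, (ii) the reconstruction equivalence of Proposition~\ref{prop:ISA-GOs} converting coends over skein categories into relative tensor products over internal skein algebras, and (iii) the identification of skein theory on a capped-off surface with strongly equivariant modules, manifested here as the functor $\Hom_{\Ahat_{P_2}}(\unit,-)$. This mirrors the strategy in \cite[Thm.~4.1]{GJS}, with the multi-gate generalisations provided by Theorem~\ref{thm:ISA-compatibility} and Proposition~\ref{prop:ISA-GOs} slotting in as drop-in replacements for their single-gate analogues used there.

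First, functoriality of $\sk_\mathcal{A}$ together with the composition rule in $\operatorname{Bimod}$ yields the coend
\[\sk_\mathcal{A}(N\circ M)\;\cong\;\int^{x\in\sk_\mathcal{A}(\Sigma_2)}\sk_\mathcal{A}(M)(-,x)\otimes\sk_\mathcal{A}(N)(x,-).\]
Restricting the outer arguments along the inclusions $\sk_\mathcal{A}(\Sigma_i^{\delta_i})\hookrightarrow\sk_\mathcal{A}(\Sigma_i)$ for $i=1,3$ and transporting across Proposition~\ref{prop:ISA-GOs} recovers by construction $\intskmod_{\mathcal{A},(P_1,P_3)}(N\circ M)$ as a bimodule over $\A_{\Sigma_1^{\delta_1},P_1}$ and $\A_{\Sigma_3^{\delta_3},P_3}$.

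Next, the crucial step is to split the inner coend over $\sk_\mathcal{A}(\Sigma_2)$ into two stages corresponding to the decomposition $\Sigma_2=\Sigma_2^{\delta_2}\cup_{\partial}(\sqcup\,\mathbb{D})$. The coend over $\sk_\mathcal{A}(\Sigma_2^{\delta_2})$, viewed through Proposition~\ref{prop:ISA-GOs}, becomes the relative tensor product $\otimes_{\A_{\Sigma_2^{\delta_2},P_2}}$ in bimodules over the internal skein algebra; this is the standard identification of coends over module categories with enriched relative tensor products, valid precisely because the empty $\mathcal{A}$-labelling is an $\mathcal{A}_{P_2}$-progenerator (which is where the hypothesis on $P_2$ enters essentially). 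Capping off the disks in $\delta_2$ to recover the full $\Sigma_2$ is then, by the proposition on disk capping recalled in the excerpt, the passage to strongly equivariant modules with respect to the moment maps $\OA\to\A_{\Sigma_2^{\delta_2},P_2}$ arising from the newly-capped boundary circles; this restriction is computed as $\Hom_{\Ahat_{P_2}}(\unit,-)$ applied to the residual $\Ahat_{P_2}$-structure.

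I expect the main obstacle to be bookkeeping rather than conceptual: one must verify that the left and right actions by $\A_{\Sigma_1^{\delta_1},P_1}$ and $\A_{\Sigma_3^{\delta_3},P_3}$ are carried unharmed through the two-step decomposition of the coend, and that the quantum moment maps arising from the $\delta_2$-disks agree with those used to define strong equivariance in Definition~\ref{def:HC}'s framework. Once these compatibilities are pinned down, the composite isomorphism is precisely the claimed formula, and the proof of \cite[Thm.~4.1]{GJS} applies verbatim in the multi-gate setting.
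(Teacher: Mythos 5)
Your proposal is correct and takes essentially the same route as the paper, which offers no independent argument here but simply observes that the statement is a mild generalisation of \cite[Thm.~4.1]{GJS}: one runs that proof with the multi-gate reconstruction inputs (Theorem~\ref{thm:ISA-compatibility}, Proposition~\ref{prop:ISA-GOs}, and the capping/strong-equivariance statement) exactly as you outline. You also correctly locate where the hypothesis on $P_2$ enters, namely in guaranteeing that the empty labelling is an $\mathcal{A}_{P_2}$-progenerator so that Proposition~\ref{prop:ISA-GOs} applies to $\Sigma_2^{\delta_2}$.
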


\begin{corollary}\label{cor:int-bimod-gluing}
    If $\Ahat$ has trivial M\"uger centre, then 
    \[    
    \intskmod_{\mathcal{A},(P_1,P_3)}(N\circ M) \cong \intskmod_{\mathcal{A},(P_1, P_2)}(M)\otimes_{\A_{\Sigma_2^{\delta_2}, P_2}} \intskmod_{\mathcal{A},(P_2, P_3)}(N)~.\]
\end{corollary}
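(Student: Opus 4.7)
The plan is to deduce the corollary directly from the preceding Proposition, which already identifies
\[\intskmod_{\mathcal{A},(P_1,P_3)}(N\circ M) \;\cong\; \Hom_{\Ahat_{P_2}}\!\left(\unit,\, \intskmod_{\mathcal{A},(P_1, P_2)}(M)\otimes_{\A_{\Sigma_2^{\delta_2}, P_2}} \intskmod_{\mathcal{A},(P_2, P_3)}(N)\right).\]
It therefore remains only to show that, when $\Ahat$ has trivial M\"uger centre, the invariants functor $\Hom_{\Ahat_{P_2}}(\unit,-)$ acts as the identity on the relative tensor product on the right-hand side. My approach will be to argue that this tensor product already lies inside the full subcategory of $\Ahat_{P_2}$ generated under colimits by $\unit$, on which $\Hom_{\Ahat_{P_2}}(\unit,-)$ is an equivalence with $\Mod_\field$ whose pseudo-inverse is $V\mapsto V\otimes \unit$.

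To locate the relative tensor product inside this trivial isotypic component, I would exploit the residual $\Ahat_{P_2}$-action it carries coming from dragging skeins through the annular collars at the $P_2$-gates. Because the two bordisms $M$ and $N$ are being glued along the \emph{closed} surface $\Sigma_2$, any object labelling a $P_2$-gate of $M$ can be braided around its gate disk, transported across the relative tensor product, and brought back in through the opposite $P_2$-gate of $N$. The relative tensor product over $\A_{\Sigma_2^{\delta_2}, P_2}$ is precisely the quotient that makes this topological move algebraically valid, and so imposes on the residual action a full double-braiding relation. Any surviving summand is therefore transparent to all of $\Ahat$, i.e.\ lies in the M\"uger centre; the hypothesis then collapses the support to the trivial isotypic component. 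The bimodule structures over $\A_{\Sigma_1^{\delta_1},P_1}$ and $\A_{\Sigma_3^{\delta_3},P_3}$ are unaffected by this analysis, and match those on the left-hand side by the preceding Proposition.

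The main obstacle will be making the double-braiding argument rigorous: namely, verifying that the residual $\Ahat_{P_2}$-action on the relative tensor product over the internal skein algebra factors through the M\"uger centre of $\Ahat$. This is essentially a diagrammatic (or monadic, in the style of Section~\ref{sec:int-sk}) check that relations imposed by tensoring over $\A_{\Sigma_2^{\delta_2}, P_2}$ enforce the ``braid around and back'' move which is topologically obvious on the glued 3-manifold. Once this is established, the trivial M\"uger centre hypothesis collapses $\Hom_{\Ahat_{P_2}}(\unit,-)$ to the forgetful equivalence on the trivial component, and the identification of the Proposition becomes the claimed isomorphism.
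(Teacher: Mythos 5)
Your proposal is correct and follows essentially the paper's (implicit) argument, which is the mild generalisation of \cite[Thm.\ 4.1]{GJS} being cited: reduce via the preceding Proposition and then note that the trivial M\"uger centre forces the $P_2$-components of the relative tensor product into the subcategory generated under colimits by $\unit$, so that $\Hom_{\Ahat_{P_2}}(\unit,-)$ loses no information. The transparency step you flag as the main obstacle is exactly the strong equivariance the paper records ``by construction'' just before the Proposition: each of $\intskmod_{\mathcal{A},(P_1,P_2)}(M)$ and $\intskmod_{\mathcal{A},(P_2,P_3)}(N)$ is already strongly equivariant at the $P_2$-gates (annular $\O_{\mathcal{A}}$-skeins at a gate bound the reinstated disk in the boundary, so the moment map acts through the counit), and this descends to the relative tensor product, so no separate double-braiding check on the quotient is required.
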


\section{Skein transfer bimodules}\label{sec:transfer bimodules}
In this section, we construct transfer bimodules for skein theory which are used to construct analogues of inverse and direct images. Throughout this section, fix $\mathcal{A} = \Rep^{\mathrm{fd}}_q (G)$ and $q$ generic. 

Consider a bordism $M:\Sigma_1\rightarrow \Sigma_2$ between closed surfaces and let $\delta_1$ and $\delta_2$ be disc removal data on $\Sigma_1$ and $\Sigma_2$. Recall from example \ref{ex:quantization} that the internal skein algebras $\A(\Sigma_i^{\delta_i})$ quantize the Poisson varieties $R_G(\Sigma_i).$ The internal skein bimodule $\intskmod_{q,G}(M)$ is a quantization of the Lagrangian correspondence 
\[ R_G(M) \rightarrow R_G(\Sigma_1)\times R_G(\Sigma_2)~.\]

We will use this feature of internal skein modules to define \textit{transfer functors} (see Definition~\ref{def:transfer-functor}). Before giving the definition we recall briefly compression bodies as by Heegaard decomposition they are the building blocks for 3-bordisms. 
\begin{figure}
	\centering
    \begin{tikzpicture}[scale = 1.2]
        \draw[thick] (-3,0) to[out=90, in=90, looseness = 0.7] (3,0);
        \draw[thick] (-3,0) to[out=-90, in=-90, looseness = 0.7] (3,0);
        \def\g{2} 
        \foreach \i in {1,...,\g} {
            \pgfmathsetmacro\x{(\i-0.5)*6/\g - 3}
            \draw[thick] (\x+0.5,0) arc[start angle=30, end angle=150, radius=0.5];
            \draw[thick] (\x+0.57,0.2) arc[start angle=0, end angle=-180, radius=0.5];
        }
		\node at (0.3,0) {$\cdots$};
            \draw[thick, red] (-1.5,-0.3) to[out=180, in=180] (-1.5,-1.07);
            \draw[thick, red,dotted] (-1.5,-0.3) to[out=0, in=0] (-1.5,-1.07);
            \node at (-1.9,-0.6) {$\alpha$};
            \draw[thick, Myblue] (-0.5,1.2) to[out=180, in=180,looseness=0.3] (-0.5,-1.2);
            \draw[thick, Myblue,dotted] (-0.5,1.2) to[out=0, in=0,looseness=0.3] (-0.5,-1.2);
            \node at (-0.2,-0.6) {$\gamma_{1,g-1}$};
    \end{tikzpicture}
\caption{Attaching non-separating curve $\alpha$ and separating curve $\gamma_{1,g-1}$ on the genus $g$-surface $\Sigma_g$.}
\label{fig:handle-gluing}
\end{figure}
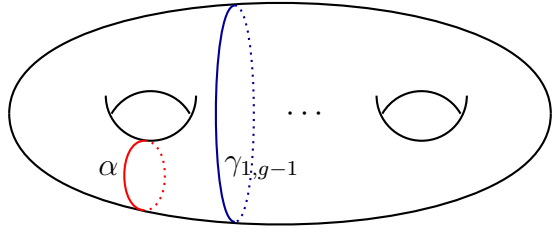

\begin{definition}
    A \textbf{compression body} $C$ is obtained from a cylinder $\Sigma\times I$ by gluing $2$-handles along a specified multicurve $\alpha\subset \Sigma \times \{0\}$ and potentially filling $3$-handles along boundary $2$-spheres.
\end{definition}

In other words, a compression body is specified by a triple $(\Sigma, \alpha,  X)$ consisting of a surface $\Sigma$, a multicurve $\alpha \subset \Sigma$, and a subset $X$ of boundary $2$-spheres in $\Sigma_\alpha$, the surface resulting from $\Sigma$ after gluing 2-handles along $\alpha$. Such a compression body defines a bordism 
\begin{equation*}
    C: \partial_{\mathrm{in}} C \rightarrow \Sigma
\end{equation*}

For example, a handlebody is a compression body such that $\partial_\mathrm{in}C = \emptyset$.

The \textit{change of coordinates principle} \cite[Ch.\ 1.3]{FarbMargalit} states that:
\begin{quote}
    Two simple closed curves $\alpha, \beta \subset \Sigma$ are related by an (orientation) preserving homeomorphism $f: \Sigma \rightarrow \Sigma$, \ie $f(\alpha) = \beta$, if and only if the cut surfaces $\Sigma\backslash\alpha$ and $\Sigma\backslash\beta $ are homeomorphic. 
\end{quote}

As a consequence, in our proofs we will restrict our attention to the following 2-handle attachments as depicted in Figure~\ref{fig:handle-gluing}.  We call these the \textbf{standard curves}:
     
\begin{enumerate}
    \item (Non-separating $C_\alpha$) Attachment along the non-separating curve $\alpha$.
    \item (Separating $C_{\gamma_{g_1,g_2}}$) Attachment along a separating curve $\gamma_{g_1,g_2}$ which partitions the surface into surfaces of genus $g_1,g_2$ with $g=g_1+g_2$. 
\end{enumerate}

\begin{proposition}\label{prop:compression}
Every compression body $C = (\Sigma, \alpha, X)$ as above may be obtained as a composition of compression bodies $C_i$ which glue a single 2-handle along a standard curve, together with mapping cylinders of diffeomorphisms and 3-handles closing boundary spheres.  Namely, we have:

\begin{equation*}
    C \cong M_{f_1}\circ C_{1}\circ \dots \circ M_{f_m} \circ C_m\circ X, 
\end{equation*}
where each $C_i$ is a standard curve $C_\alpha$ or $C_{\gamma_{g_1,g_2}}$, and $X$ is a collection of 3-handles.
\end{proposition}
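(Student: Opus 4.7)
The plan is to reduce the claim to two standard ingredients: (i) handle decompositions of 3-manifolds can be linearised so that one handle is attached at a time, and (ii) the change-of-coordinates principle on surfaces, which realises any simple closed curve of a given topological type as the image of a standard model curve under an orientation-preserving ambient diffeomorphism.

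Given $C = (\Sigma,\alpha,X)$, I would first enumerate the connected components $\alpha = \alpha_1 \cup \cdots \cup \alpha_m$ of the multicurve. Since these curves are pairwise disjoint, the corresponding 2-handles can be attached in any order, producing a sequential factorisation of $C$ into elementary compression bodies $D_i$, each attaching exactly one 2-handle along $\alpha_i$ in the intermediate surface $\Sigma_{i-1}$ obtained after surgering along $\alpha_1, \ldots, \alpha_{i-1}$. Invoking standard handle calculus, the 3-handles of $X$ (attached along 2-spheres which appear only after the prior 2-handle attachments) may be collected at one end of the composition, yielding a bordism-level factorisation $C \cong D_1 \circ \cdots \circ D_m \circ X$.

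For each $D_i : \Sigma_i \to \Sigma_{i-1}$, I would then appeal to the change-of-coordinates principle quoted in the text. Working on the connected component of $\Sigma_{i-1}$ containing $\alpha_i$, if $\alpha_i$ is non-separating there is an orientation-preserving diffeomorphism $f_i$ of $\Sigma_{i-1}$ mapping the standard non-separating curve $\alpha$ to $\alpha_i$; if $\alpha_i$ is separating with complementary pieces of genera $g_1, g_2$, there is a diffeomorphism $f_i$ of $\Sigma_{i-1}$ sending $\gamma_{g_1,g_2}$ to $\alpha_i$. Transporting the product structure of $\Sigma_{i-1} \times I$ by $f_i \times \mathrm{id}$ identifies $D_i$ as a bordism with the composition $M_{f_i} \circ C_i$, where $C_i$ is the standard-curve compression body and $M_{f_i}$ is the mapping cylinder of $f_i$ acting on the outgoing boundary $\Sigma_{i-1}$. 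Substituting these identifications yields the required decomposition $C \cong M_{f_1} \circ C_1 \circ \cdots \circ M_{f_m} \circ C_m \circ X$.

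The main points of care are combinatorial rather than geometric: after a separating attachment the intermediate surface may acquire extra connected components, so the change-of-coordinates statement and the identification with a standard model must be applied on the component containing $\alpha_i$, with the diffeomorphism $f_i$ extended by the identity on the remaining components. I expect the only genuinely non-trivial step to write out carefully is the diffeomorphism-of-bordisms identification $D_i \cong M_{f_i} \circ C_i$, which ultimately reduces to the basic fact that the diffeomorphism type of a 2-handle attachment depends only on the isotopy class (in fact the image under an ambient diffeomorphism) of the attaching curve.
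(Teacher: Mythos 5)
Your proposal is correct and follows essentially the same route as the paper's own proof: iterate over the disjoint loops $\alpha_1,\ldots,\alpha_m$ of the multicurve to get a composition of elementary one-handle compression bodies, push the $3$-handles $X$ to one end, and then invoke the change-of-coordinates principle to replace each elementary piece $D_i$ by $M_{f_i}\circ C_i$ for a standard curve $C_i$. Your added remark about applying the change of coordinates componentwise when a separating attachment disconnects the intermediate surface, with $f_i$ extended by the identity elsewhere, is a reasonable clarification that the paper's sketch leaves implicit.
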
  

\begin{proof} By definition a compression body $C = (\Sigma, \alpha, X)$ may be obtained iteratively by a composition of compression bodies which glue a single 2-handle.

Namely, if $\alpha = \alpha_1\cup \dots \cup\alpha_m$ consists of $m$ disjoint loops $\alpha_i$ on $\Sigma$, then 
\begin{equation*}
    C \cong C'_{1}\circ \dots \circ C'_m\circ X 
\end{equation*}
where $C'_{i}: \Sigma_{i}\rightarrow \Sigma_{i-1}$ are compression bodies obtained gluing a single 2-handle, defined as follows: Set $\Sigma_{0}:= \Sigma$ and $C'_1: \Sigma_1 \rightarrow \Sigma$ is the compression body obtained from gluing a 2-handle along $\alpha_1\subset \Sigma\times\{0\}$. Iteratively, let $C'_i: \Sigma_{i}\rightarrow \Sigma_{i-1}$ be the compression body gluing a single 2-handle along $\alpha_i$ on $\Sigma_{i-1} = \partial_{\mathrm{in}}C'_{i-1}$.

By the change of coordinates principle, we may replace each $\alpha_i$ by some $f_i$ applied to a standard curve.  The result is to replace $C'_i$ by $M_{f_i}\circ C_i$ where now $C_i$ is a compression body attaching along a standard curve $\alpha$ or $\gamma_{g_1,g_2}$.
\end{proof}

Given a general compression body $C = (\Sigma, \alpha,X)$ of a closed surface $\Sigma$, fix a gate datum $P$ consisting of a single disk removal and a single gate for each connected component of $\partial_{\mathrm{in}}C$ excluding sphere components.  This choice further defines a disk removal and gate datum $P'$ on $\partial_\mathrm{out}C =\Sigma$ which might include multiple gates on a single connected component.

\begin{remark}
We do not treat the filling of any resulting boundary 2-spheres in the compression body, since these do not alter the skein module.  That is however a feature of the (underived) skein module, and one expects the derived skein module to be sensitive to the presence of boundary 2-spheres, or to second homology in $M$ more generally.
\end{remark}

Returning to skeins, given a $3$-bordism $M$ we define the associated transfer functor via its internal skein bimodule: 
\begin{definition}\label{def:transfer-functor}
    The \textbf{transfer functor} $\mathcal{T}_M$ associated to the bordism $M: \Sigma_1 \rightarrow \Sigma_2$ is defined as 
    \[
    \mathcal{T}_M:= \intskmod_{q,G}(M)\otimes_{\A_{\Sigma_2}} \mbox{-}: \A_{\Sigma_2}\modu(\Ahat_{P_2}) \rightarrow \A_{\Sigma_1}\modu(\Ahat_{P_1})~.
    \]
\end{definition}
By Corollary~\ref{cor:int-bimod-gluing}, the transfer functor $\mathcal{T}_M$ is itself functorial in $M$. Thus, by Heegaard decomposition the building blocks such transfer functors arise from compression bodies: 
\begin{definition}
Let $C = (\Sigma,\alpha,X)$ be a compression body and $(P,P')$ fixed gate data as described above. The transfer bimodule of $C$ is the $(P,P')$-internal skein bimodule
\[
\A(C):= \intskmod_{\mathcal{A};(P,P')}(C)\in \A_{\partial_\mathrm{in}C;P}\mbox{-}\A_{\Sigma; P'}\mbox{-}\Bimod(\Ahat^{\boxtimes 2\mid\pi_0(\partial_\mathrm{in}C)\mid })~.
\]
\end{definition}
By Corollary~\ref{cor:int-bimod-gluing} and Proposition~\ref{prop:compression} we have 
\[
\A(C) \cong \A(C_m)\otimes_{\A_{\Sigma_{m-1}}}\cdots \otimes_{\A_{\Sigma_1}} \A(C_1)
\]
as $\A_{\partial_{in}C}\mbox{-}\A_{\Sigma}$-bimodules. Hence, transfer bimodules of compression bodies are determined by single $2$-handle attachments. 

We will write $C_\alpha$ and $C_{\gamma_{g_1,g_2}}$ for the compression bodies obtained by the associated 2-handle attachments. For their transfer functors, we will use the suggestive notation 
\begin{equation*}
    i^\ast:=\mathcal{T}_{C_\alpha}: \A_{g,r}\modu(\Ahat^{\boxtimes r}) \rightarrow \A_{g-1,r}\modu(\Ahat^{\boxtimes r})~
\end{equation*}
and
\begin{equation*}
    \delta_{g_1,g_2}:= \mathcal{T}_{C_{\gamma_{g_1,g_2}}}: \A_{g,r}\modu(\Ahat^{\boxtimes r}) \rightarrow \A_{g_1;r_1}\boxtimes\A_{g_2;r_2}\modu(\Ahat^{\boxtimes r})~.
\end{equation*}
This is justified as the former exhibits the inverse image along the closed immersion $i:\{e\}\times G^{g-1}\hookrightarrow G^g$. The associated transfer bimodules are given by
\begin{align}
    \A_{g-1\rightarrow g}:=\A(C_\alpha)&\cong (A-I)\backslash\DqG \totimes\Dq(G)^{g-1} \totimes \DqFRT^{\totimes r-1}~,\label{eq:ns-transfer}\\ 
    \A_{g\rightarrow (g_1,g_2),r}:=\A(C_{\gamma_{g_1,g_2}})&\cong \DqG^{\totimes g}\totimes (A-I)\backslash\DqFRT\totimes \DqFRT^{\totimes r-2}~.\label{eq:s-transfer}
\end{align}
where we write $(A-I)$ as shorthand for the right ideal kernel of the counit map $\epsilon:\OqG\rightarrow \field$ for the associated $G$-cycle. The right $\A_{g,r}$-action on the bimodules is induced from the right regular action on $\A_{g,r}$ while the left $\A_{g-1,r}$ resp.\ $\A_{g_1,r_1}\boxtimes \A_{g_2,r_2} $ comes from acting on the right with the remaining cycles while braiding past the $(A-I)\backslash\DqG\cong \OqG$ factor resp.\ $(A-I)\backslash\DqFRT\cong \OqFRT$ factor.

\section{Holonomicity of skein modules}\label{sec:MainThm}

In this section we build up to a geometric approach to internal skein algebras and transfer bimodules.   We recreate key elements of Sabbah's approach to $q$-difference operators, and using them to define the notion of holonomicity for skein modules, and to establish preservation under handle attachments.

\subsection{Algebras of \texorpdfstring{$q$}{q}-difference operators on \texorpdfstring{$\GL_2$}{GL2} and \texorpdfstring{$\SL_2$}{SL2}}\label{subsec:GL2-background}

We now recall in detail the algebras $\DqG$ and $\DqFRT$ of $q$-difference operators on $G$ for $G=\GL_2,\SL_2$ and their basic properties. These have appeared in many places, e.g. \cite{KlimykS}, \cite{Backelin-Kremnizer} and \cite{VV}. We follow the conventions of \cite{Jquiver}, \cite{BJ} and \cite{BalagovicJ}.

\begin{definition}
	The Hopf algebra $\Uqgl{2}$ is generated by $E,F, K_i^\pm$ for $i=1,2$ with defining relations: 
	\begin{align*}
		[K_1, E]_q &= 0 & [K_1,F]_{q^{-1}} &= 0 \\ 
		[K_2, E]_{q^{-1}} &= 0 & [K_2,F]_{q} &= 0 \\ 
		[K_1,K_2]&=0 & [E,F] &= \frac{K_1 K_2^{-1} - K^{-1}K_2}{q- q^{-1}}  
	\end{align*}
	The Hopf algebra structure is given by: 
	\begin{align*}
		\Delta E &= E\otimes K_1 K_2^{-1} + 1\otimes E & S E &= - E K_1^{-1} K_2 \\
		\Delta F &= F\otimes 1 + K_1^{-1} K_2\otimes F & S F &= - K_1 K_2^{-1} F \\
		\Delta K_i &= K_i \otimes K_i & S K_i &= K_i^{-1}   
	\end{align*}
\end{definition}
The $R$-matrix on the fundamental representation $V(1)$ is given by 
\begin{equation}\label{eq:R-matrix}
    R = \begin{pmatrix}
        q & 0& 0& 0\\ 
        0 & 1 & 0 & 0 \\ 
        0 & q - q^{-1} &1 &0 \\ 
        0 & 0 & 0 & q
    \end{pmatrix}
\end{equation}

\begin{remark}
    The subalgebra in $U_q\mathfrak{gl}_2$ generated by $E,F$ and $K = K_1 K_2^{-1}$ retrieves the quantum group $U_q\mathfrak{sl}_2$. The associated $R$-matrix on $V(1)$ differs from \eqref{eq:R-matrix} by a $q^{-1/2}$ factor: 
    \begin{equation*}
        \tilde{R}:= q^{-1/2} R 
    \end{equation*}
\end{remark}

An equivalent description of $\Uqgl{2}$ is generated by the entries of: 
\begin{equation*}
	L^+ = \begin{pmatrix}
		l_1^{+1} & l_2^{+1}\\
		0 & l_2^{+2}
	\end{pmatrix},
	\quad 
	L^- = \begin{pmatrix}
		l_1^{-1} & 0\\
		l_1^{-2} & l_2^{-2}
	\end{pmatrix}
\end{equation*}
satisfying the following relations: 
\begin{align*}
	&l_1^{+1} l_1^{-1} = l_2^{+2}l_2^{-2} = 1 & &[l_1^{\pm1}, l_2^{\pm2}] = 0\\ 
	&[l_1^{+1} ,l_2^{+1}]_{q^{-1}}=0  & &[l_2^{+2}, l_2^{+1}]_{q} =0\\ 
	&[l_1^{+1} ,l_1^{-2}]_{q} = 0 & &[l_2^{+2}, l_2^{-2}]_{q^{-1}} = 0\\ 
	&[l_1^{-2}, l_2^{+1}] = (q -q^{-1}) (l_2^{+2}l_1^{-1} - l_1^{+1} l_2^{-2})   
\end{align*}

We now recall the construction of the quantum coordinate algebras $\OqGL{2}$ and $\OqSL{2}$ as \textit{reflection equation algebras}.

\begin{definition}\label{def:Oqmat}
	Let $\OqMat$ be the algebra generated by the entries of $L = \begin{pmatrix}
		l_1^1 & l_2^1\\ 
		l_1^2 & l_2^2 
	\end{pmatrix}$
satisfying the reflection equation
\[R_{21} L_1 R L_2 = L_2 R_{21} L_1 R\]
or explicitly in terms of generators
\begin{align*}
	[l_2^{1}, l_1^{1}] &= -\numberq{-2}{q}\,l_2^1 l_2^2  & &[l_2^{2}, l_1^{1}] = 0\\
	[l_1^{2}, l_1^{1}] &=  \numberq{-2}{q}\,l_2^2 l_1^2  & &[l_2^{2},l_2^{1}]_{q^2} =0\\ 
	[l_1^{2}, l_2^{1}] &= -\numberq{-2}{q}\,(l_1^1 l_2^2 -l_2^2 l_2^2)  & &[l_2^{2},l_1^{2}]_{q^{-2}} =0~. 
\end{align*}
\end{definition}
Recall our convention $\numberq{n}{q}:= (q^{n}-1)$ for $n\in \mathbb{Z}$ and $q\in \field^{\times}$.
The $q$-determinant $\det_q(L) := l_1^1 l_2^2 -q^2 l_2^1 l_1^2$ is a central element in $\OqMat$ and hence we can make the following definition: 

\begin{definition}
	The quantum coordinate algebra of $\GL_2$ is the localisation: 
	\begin{equation*}
		\OqGL{2} := \OqMat\left[\detq(L)^{-1}\right]~
	\end{equation*}
    while the quantum coordinate algebra of $\SL_2$ is the specialisation: 
    \begin{equation*}
        \OqSL{2} := \OqMat/(\detq(L) -1)
    \end{equation*}
\end{definition} 

\begin{proposition}[\cite{JL92}]
    There is a unique algebra embedding $\phi: \OqGL{2} \hookrightarrow \Uqgl{2}$ given by $L\mapsto L^+ S(L^-)$ whose image is in the locally finite part of $\Uqgl{2}$. The locally finite part is given by adjoining the element $l_1^{+1} l_2^{+2}$ to the image of $\phi$. 

    For $\SL_2$, the embedding $\phi: \OqSL{2}\hookrightarrow \Uqsl{2}$ is an isomorphism onto the locally finite part of $\Uqsl{2}$.
\end{proposition}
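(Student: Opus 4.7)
The plan is to establish each claim in the order presented. First I would verify that $\phi$ is a well-defined algebra homomorphism from $\OqMat$ to $\Uqgl{2}$. The generators of $\Uqgl{2}$ arranged in $L^\pm$ satisfy the FRT-type relations $R L_1^\pm L_2^\pm = L_2^\pm L_1^\pm R$ and $R L_1^+ L_2^- = L_2^- L_1^+ R$, and a direct R-matrix manipulation using the Yang-Baxter equation together with the antipode axioms shows that the combination $L := L^+ S(L^-)$ satisfies the reflection equation of Definition~\ref{def:Oqmat}. I would then compute $\phi(\detq L)$ explicitly; it turns out to be a monomial in the invertible group-like entries $l_1^{+1}$, $l_2^{+2}$ (and their inverses $l_1^{-1}$, $l_2^{-2}$), so $\phi$ extends uniquely to the localisation $\OqGL{2}$. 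Post-composing with the quotient $\Uqgl{2}\twoheadrightarrow \Uqsl{2}$ (where the group-like product in question becomes $1$) yields the $\SL_2$ variant.

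For injectivity of $\phi$, I would use a PBW filtration argument. Both $\OqGL{2}$ and the image subalgebra in $\Uqgl{2}$ are flat $\Cqloc$-algebras; $\phi$ preserves the filtration by total degree in $L$, and its associated graded specialises at $q=1$ to the classical embedding $\mathcal{O}(\GL_2) \hookrightarrow U\mathfrak{gl}_2$ obtained from trace-pairing matrix coefficients of the defining representation. Since this classical map is injective and flatness propagates injectivity back to generic $q$, we conclude $\phi$ is injective. For local finiteness of the image, I would observe that each entry $l_i^j$ of $L$ transforms under the left adjoint action of $\Uqgl{2}$ as a matrix coefficient of the finite-dimensional module $V(1) \otimes V(1)^\ast$, so each generator sits inside a finite-dimensional adjoint submodule. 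Since the locally finite part $F(\Uqgl{2})$ is a subalgebra, the whole image of $\phi$ lies inside it.

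The main obstacle will be the last step: matching the image of $\phi$ (together with the adjoined element $l_1^{+1} l_2^{+2}$ in the $\GL_2$ case) against the \emph{entire} locally finite part. Here I would invoke the Joseph-Letzter classification, which expresses $F(\Uqgl{2})$ as an isotypic direct sum $\bigoplus_{\lambda} V(\lambda) \otimes V(\lambda)^\ast$ over dominant weights $\lambda$ in a suitable sublattice determined by $2\rho$ modulo the root lattice. I would identify the image of $\phi$ with precisely those components indexed by weights in the root lattice of $\mathfrak{gl}_2$ by constructing, in each such isotypic component, a highest-weight vector as an explicit ordered polynomial in the matrix entries $l_i^j$ and checking weight-space dimensions match those predicted by the quantum Peter-Weyl decomposition. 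The residual isotypic components differ by an overall central character shift, which is captured precisely by multiplication by the invertible group-like element $l_1^{+1} l_2^{+2}$ (quantizing the central $\mathbb{G}_m$-factor of $\GL_2$). For $\SL_2$ this central factor collapses: the root lattice and the weight lattice differ by two-torsion, and the relevant isotypic decomposition is already exhausted by the image of $\phi$, giving the isomorphism onto $F(\Uqsl{2})$. The delicate bookkeeping of weights and dimensions in this final matching step is where I expect the bulk of the technical effort to lie.
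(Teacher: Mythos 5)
The paper does not prove this proposition at all: it is quoted from Joseph--Letzter \cite{JL92} (together with the standard reflection-equation formalism), so the only fair comparison is against that literature. Your opening steps are fine and standard: $L=L^+S(L^-)$ satisfies the reflection equation by an $R$-matrix computation, $\phi(\detq(L))$ is an invertible group-like element so $\phi$ extends to the localisation, and local finiteness of the image follows because each $\phi(l^i_j)$ lies in a finite-dimensional $\operatorname{ad}$-submodule (a matrix coefficient of $V(1)\otimes V(1)^\ast$) and the locally finite part is a subalgebra. Your final step, matching the image against the full locally finite part via the isotypic decomposition $F(\Uqgl{2})\cong\bigoplus_\lambda \operatorname{ad}(\Uqgl{2})\cdot K_{-2\lambda}$ and accounting for the central group-like $l^{+1}_1 l^{+2}_2$, is essentially a restatement of the Joseph--Letzter theorem itself; that is acceptable here only because the paper likewise simply cites it, but be aware that this is where the real content lies, and your ``weights in the root lattice'' bookkeeping for $\mathfrak{gl}_2$ is not yet an argument.

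There is, however, a genuine gap in your injectivity step. You propose to specialise at $q=1$ and claim the map degenerates to a classical embedding $\mathcal{O}(\GL_2)\hookrightarrow U\mathfrak{gl}_2$ ``obtained from trace-pairing matrix coefficients''. No such classical embedding exists as the limit of $\phi$: as $q\to 1$ one has $L^+\to I$ and $S(L^-)\to I$, so $\phi(l^i_j)\to\delta^i_j$, i.e.\ the specialisation of $\phi$ is the counit followed by the unit, which is maximally non-injective. The fact that $\OqG$ embeds into $U_q\mathfrak{g}$ as the (ad-)locally finite part is a purely quantum phenomenon with no classical counterpart, so flatness cannot transport injectivity from $q=1$. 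Injectivity at generic $q$ must instead be obtained quantum-mechanically, e.g.\ from the non-degeneracy of the Rosso form (the pairing encoded by $\mathcal{R}_{21}\mathcal{R}$) or from the Joseph--Letzter analysis of $\operatorname{ad}$-submodules generated by the $K_{-2\lambda}$; alternatively, for these small groups one can argue directly with a PBW basis of $\Uqgl{2}$ at generic $q$, but not by passing to $q=1$. A smaller imprecision: in the paper's conventions $\Uqsl{2}$ is a subalgebra of $\Uqgl{2}$, not a quotient, so the $\SL_2$ case should be set up by checking $\phi(\detq(L))$ specialises to $1$ rather than by ``post-composing with the quotient''.
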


\begin{lemma}
	The centre of $\OqGL{2}$ coincides with the subspace of $\Uqgl{2}$-invariants with respect to the adjoint action. It is the subalgebra generated by the $q$-trace 
	\begin{equation*}
		\tr_q(L) := l_1^1+ q^{-2} l_2^2
	\end{equation*}
	 and $\det_q(L)^{\pm 1}$.
\end{lemma}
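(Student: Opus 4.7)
The plan is to deduce both claims by transporting the question across the embedding $\phi:\OqGL{2}\hookrightarrow\Uqgl{2}$ recalled in the previous proposition, whose image together with $l_1^{+1}l_2^{+2}$ is the adjointly locally finite part $\Uqgl{2}^{\mathrm{lf}}$. The key compatibility I would first establish is that $\phi$ is an intertwiner for the two adjoint actions: the natural $\Uqgl{2}$-action on $\OqGL{2}$ coming from viewing it as an RE algebra, and the adjoint action of $\Uqgl{2}$ on itself via $x\mapsto x_{(1)}yS(x_{(2)})$. Concretely this follows by a short computation using $L=L^+S(L^-)$ and the standard braiding identities between $L^\pm$ and arbitrary elements of $\Uqgl{2}$.

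Once this intertwining is in place, the identification of the centre with the $\Uqgl{2}$-invariants reduces to the analogous statement for $\Uqgl{2}^{\mathrm{lf}}$: an element of any Hopf algebra is ad-invariant if and only if it is central. One direction is immediate from Hopf algebra axioms; the other uses that the generators $E,F,K_i^{\pm1}$ act by $(\mathrm{ad}\, x)y=x_{(1)}yS(x_{(2)})$, so ad-invariance of $y$ forces $xy=yx$ on generators, hence on all of $\Uqgl{2}$. Restricting to the locally finite part and pulling back along $\phi$ gives $Z(\OqGL{2})=\OqGL{2}^{\Uqgl{2}}$.

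For the generation statement, I would appeal to the Peter--Weyl decomposition
\[
\OqGL{2}\;\cong\;\bigoplus_{\substack{(a,b)\in\mathbb{Z}^2\\ a\geq b}} V(a,b)^\ast\otimes V(a,b)
\]
as a $\Uqgl{2}$-module under the adjoint action, where the adjoint-invariant subspace of each summand is one dimensional and spanned by the quantum character $\chi_{(a,b)}=(\mathrm{id}\otimes\mathrm{tr}_q)\bigl(L_{V(a,b)}\bigr)$ of the irreducible $V(a,b)$. The invariant subalgebra is then the algebra of quantum characters, which (as in the classical limit, where it becomes $\mathbb{Z}[x_1,x_2]^{S_2}[(x_1x_2)^{-1}]$) is freely generated by the quantum characters of the fundamental representation and of the one-dimensional determinant representation. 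By definition, the quantum character of the fundamental $V(1,0)$ is $\tr_q(L)=l_1^1+q^{-2}l_2^2$, and the quantum character of the determinant $V(1,1)$ is $\detq(L)$, which we already know is central. A straightforward induction on $a-b$ (or a direct $q$-Schur-polynomial argument) expresses each $\chi_{(a,b)}$ as a polynomial in $\tr_q(L)$ and $\detq(L)^{\pm1}$, and the absence of further relations follows by comparing with the classical character ring via the flat $q\to 1$ degeneration.

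The only real obstacle is the first step, i.e.\ verifying that the RE-algebra ad-action matches the honest Hopf adjoint action under $\phi$; this is a calculation but one that must be pinned down carefully because the braiding in the definition of the RE algebra is what makes this compatibility hold (and is what ultimately produces central elements from quantum traces). Everything after that is a routine combination of Peter--Weyl for $\Uqgl{2}$ and classical character theory of $\GL_2$.
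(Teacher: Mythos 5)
The paper itself does not prove this lemma: it is recalled as a standard fact (in the conventions of \cite{JL92,BalagovicJ}), so your proposal has to stand on its own. The good news is that most of it does: the ad-equivariance of $\phi:L\mapsto L^+S(L^-)$ is a genuine and checkable fact, and the second half of your argument --- Peter--Weyl for $\OqGL{2}$, one-dimensionality of the invariants in each summand $V(\lambda)^\ast\otimes V(\lambda)$ for generic $q$, identification of the invariant subalgebra with the classical representation ring of $\GL_2$, and the induction expressing every quantum character in terms of $\tr_q(L)$ and $\detq(L)^{\pm1}$ --- is correct and standard.

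The gap is in your first reduction. The Hopf-algebra fact you invoke (ad-invariant $\Leftrightarrow$ central) characterises the centre of the \emph{ambient} Hopf algebra $\Uqgl{2}$, not the centre of a subalgebra. What the hard direction of the lemma needs is: if $z\in\OqGL{2}$ commutes with all of $\OqGL{2}$ --- equivalently $\phi(z)$ commutes with the image of $\phi$, or with the locally finite part, which differs from that image only by the central element $l_1^{+1}l_2^{+2}=K_1K_2$ --- then $z$ is ad-invariant. The locally finite part is only a coideal subalgebra, not a Hopf subalgebra, so your equivalence does not apply to it, and centrality in a proper subalgebra is a priori strictly weaker than centrality in $\Uqgl{2}$; as written, the proposal silently conflates the two. (The easy direction, invariants $\Rightarrow$ centre, does follow from your setup, and even more directly from braided commutativity of the reflection equation algebra, since invariant elements braid trivially.) To close the converse you need an extra argument, for example: note that, up to conventions, $\phi(l^2_2)=K_2^2$ and $\phi(\detq(L))$ is a power of the central element $K_1K_2$, so for $q$ generic any element of the image commuting with $\phi(\OqGL{2})$ must be of weight zero and hence commutes with $K_1^{\pm1},K_2^{\pm1}$; then use that $\Uqgl{2}$ is generated by its locally finite part together with the Cartan torus to conclude that $\phi(z)$ is central in all of $\Uqgl{2}$, and only at that point apply the Hopf-algebra equivalence to deduce ad-invariance. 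With that step supplied, your route goes through; without it, the reduction to ``ad-invariant iff central'' does not prove the stated equality of the centre with the invariants.
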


\begin{definition}
Let $\DqMat{\varepsilon}$ for $\varepsilon = 0,1$ be the algebra generated by the entries of
\begin{equation*}
	A=\begin{pmatrix}
		a_1^1 & a_2^1 \\ 
		a_1^2 & a_2^2,
	\end{pmatrix}
	\quad
    \mathrm{and}
    \quad
	D=\begin{pmatrix}
		\partial_1^1 & \partial_2^1 \\ 
		\partial_1^2 & \partial_2^2,
	\end{pmatrix}
\end{equation*}
satisfying the relations
\begin{align*}
	R_{21} A_1 R A_2 &= A_2 R_{21} A_1 R \\
	R_{21} D_1 R D_2 &= D_2 R_{21} D_1 R\\
	R_{21} D_1 R A_2 &= A_2 R_{21} D_1 R_{21}^{-1}~.
\end{align*}
The first two equations make $a^i_j$ and $\partial^i_j$ subject to the $\OqMat$-relations as in Definition~\ref{def:Oqmat}, while the last equation is expressed explicitly as 
\begin{align}
	&[\partial^1_1,a^1_1]_{q^{-2+\varepsilon}} = \numberq{-2}{q}\,  \partial^1_2 a^2_1 + q^{\varepsilon-4}\numberq{2}{q}\, a^1_2  \partial^2_1 \nonumber\\
	&[\partial^1_1,a^1_2]_{q^{-2+\varepsilon}} = \numberq{-2}{q}\, \partial^1_2 a^2_2 \nonumber\\
	&[\partial^1_1, a^2_1]_{q^{\varepsilon}} =  -\numberq{2}{q}\, \partial^2_1 a^1_1 - q^{-2}\numberq{2}{q}^2\,  \partial^2_2 a^2_1 - q^{\varepsilon}\numberq{-2}{q}\, a^2_2 \partial^2_1 \nonumber\\
	&[\partial^1_1 ,a^2_2]_{q^{\varepsilon}} =  -\numberq{2}{q}\,  \partial^2_1 a^1_2 -q^{-2}\numberq{2}{q}^2\, \partial^2_2 a^2_2 \nonumber\\
	&[\partial^1_2,a^1_1]_{q^{\varepsilon}} = -q^{\varepsilon}\numberq{-2}{q}\, a^1_2 \partial^2_2 + q^{\varepsilon}\numberq{-2}{q}\, a^1_2 \partial^1_1 \nonumber\\
	&[\partial^1_2,a^1_2]_{q^{-2+\varepsilon}} = 0 \nonumber\\
	&[\partial^1_2 ,a^2_1]_{q^{\varepsilon}} = \numberq{-2}{q}\,  \partial^2_2 a^1_1 + q^{\varepsilon}\numberq{-2}{q}\, a^2_2 \partial^1_1 - q^{\varepsilon}\numberq{-2}{q}\, a^2_2 \partial^2_2 \nonumber\\
	&[\partial^1_2,a^2_2]_{q^{-2+\varepsilon}} = \numberq{-2}{q}\, \partial^2_2 a^1_2 \nonumber\\
	&[\partial^2_1,a^1_1]_{q^{-2+\varepsilon}} =  \numberq{-2}{q}\,\partial^2_2 a^2_1\nonumber\\
	&[\partial^2_1, a^1_2]_{q^{\varepsilon}} = \numberq{-2}{q}\,\partial^2_2 a^2_2  \nonumber\\
	&[\partial^2_1,a^2_1]_{q^{-2+\varepsilon}} = 0 \nonumber\\
	&[\partial^2_1, a^2_2]_{q^{\varepsilon}} = 0 \nonumber\\
	&[\partial^2_2, a^1_1]_{q^{\varepsilon}} = -q^{\varepsilon}\numberq{2}{q}\,a^1_2\partial^2_1\nonumber\\
	&[\partial^2_2,a^1_2]_{q^{\varepsilon}} = 0\nonumber\\ 
	&[\partial^2_2,a^2_1]_{q^{-2+\varepsilon}} = q^{\varepsilon}\numberq{-2}{q}\,a^2_2 \partial^2_1\nonumber \\ 
	&[\partial^2_2,a^2_2]_{q^{-2+\varepsilon}} = 0 
	\nonumber~.
	\end{align}
\end{definition}
In other words, $\DqMat{1}$ is obtained from $\DqMat{0}$ by modifying its relations so that every summand of the form $a^i_j  \partial^k_l$ is multiplied by $q$. 

The braided tensor product $\DqMat{\varepsilon}\totimes\DqMat{\varepsilon}$ is the algebra with generators given by the entries of the 2x2-matrices: 
\begin{equation*}
	A = (a^i_j), \quad D = (\partial^i_j), \quad \tilde{A} = (\tilde{a}^i_j), \quad \tilde{D} = (\tilde{\partial}^i_j)
\end{equation*} 
corresponding to the two copies of $\DqMat{\varepsilon}$ with the additional relations from the braided tensor product: 
\begin{equation}\label{eq:braided-Oq-Oq}
	\tilde{B}_1 R B_2 R^{-1} = R B_2 R^{-1} \tilde{B}_1
\end{equation}
for $B \in \{A,D\}$ and $\tilde{B}\in \{\tilde{A},\tilde{D}\}$. Explicitly, 
\begin{align}
	&[\tilde{b}^1_1, b^1_1] =\numberq{-2}{q}\,  \tilde{b}^1_2 b^2_1 \nonumber\\
	&[\tilde{b}^1_1,b^1_2]_{q^{-2}} =-\numberq{-2}{q}\, b^1_1 \tilde{b}^2_1 + \numberq{-2}{q}\, \tilde{b}^1_2 b^2_2 \nonumber\\
	&[\tilde{b}^1_1,b^2_1]_{q^{2}} = 0 \nonumber\\
	&[\tilde{b}^1_1,b^2_2] =\numberq{-2}{q}\,  b^2_1 \tilde{b}^1_2 \nonumber\\
	&[\tilde{b}^1_2, b^1_1] = 0 \nonumber\\
	&[\tilde{b}^1_2, b^1_2] = 0 \nonumber\\
	&[\tilde{b}^1_2, b^2_1] = 0 \nonumber\\
	&[\tilde{b}^1_2, b^2_2] = 0\nonumber\\
	&[\tilde{b}^2_1, b^1_1] = \numberq{-2}{q}\,(\tilde{b}^2_2 - \tilde{b}^1_1) b^2_1\nonumber\\
	&[\tilde{b}^2_1,b^1_2] =  \numberq{-2}{q}\,\left(\left(\tilde{b}^2_2-\tilde{b}^1_1\right)b^2_2 + b^1_1 \left(\tilde{b}^1_1 - \tilde{b}^2_2\right)\right)\nonumber\\
	&[\tilde{b}^2_1, b^2_1] = 0 \nonumber\\
	&[\tilde{b}^2_1, b^2_2] = -\numberq{-2}{q}\, b^2_1 \left(\tilde{b}^2_2 - \tilde{b}^1_1\right) \nonumber\\
	&[\tilde{b}^2_2, b^1_1] = \numberq{-2}{q}\,\tilde{b}^1_2 b^2_1\nonumber\\
	&[\tilde{b}^2_2,b^1_2]_{q^{2}} = -\numberq{-2}{q}\, \left(b^1_1 \tilde{b}^1_2 - \tilde{b}^1_2 b^2_2\right)\nonumber\\ 
	&[\tilde{b}^2_2,b^2_1]_{q^{-2}} = 0 \nonumber \\ 
	&[\tilde{b}^2_2,b^2_2] =  \numberq{-2}{q}\,b^2_1 \tilde{b}^1_2 
	\nonumber 
	\end{align}
These relations determine the $g$-th braided tensor product $\left(\DqMat{\varepsilon}\right)^{\totimes g}$. 

It is an easy computation to show that $q$-determinants $\detq(A)$ and $\detq(D)$ satisfy for any $i,j\in\{1,2\}$
\begin{equation}\label{eq:detq-commutation}
    \partial^i_j\, \detq(A) = q^{2(\varepsilon-1)} \detq(A) \, \partial^i_j\quad \mathrm{and} \quad \detq(D)\, a^i_j = q^{2(\varepsilon-1)} a^i_j\, \detq(D)~.
\end{equation}
In particular, the following Proposition is immediate.
\begin{proposition}
The $q$-determinants $\det_q(A)$ and $\det_q(D)$ are central in $\DqMat{1}$ and they generate an Ore set in $\DqMat{0}$, \ie
	\begin{equation*}
		S:= \{\operatorname{det}_q(A)^n\operatorname{det}_q(D)^m\mid n,m \in \mathbb{Z}_{\geq 0}\} \subset \DqMat{0}~.
	\end{equation*} 
\end{proposition}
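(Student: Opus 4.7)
The proof should be a direct corollary of the commutation relation \eqref{eq:detq-commutation} which has already been recorded, combined with the standard fact (which lives entirely inside a single copy of $\OqMat$) that $\detq(A)$ is central in the $\OqMat$ subalgebra generated by $A$, and likewise $\detq(D)$ in the copy generated by $D$.

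For centrality in $\DqMat{1}$, the plan is to observe that when $\varepsilon = 1$ the scalar $q^{2(\varepsilon - 1)}$ appearing in \eqref{eq:detq-commutation} equals $1$. Thus \eqref{eq:detq-commutation} reduces to $\partial^i_j \detq(A) = \detq(A)\, \partial^i_j$ for all $i,j$, and symmetrically $\detq(D)\, a^i_j = a^i_j\, \detq(D)$. Combining these cross commutations with the fact that $\detq(A)$ (resp.\ $\detq(D)$) is central in the RE subalgebra on $A$ (resp.\ $D$), we see each $q$-determinant commutes with every generator of $\DqMat{1}$ and hence is central.

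For the Ore property in $\DqMat{0}$, the key observation is that when $\varepsilon = 0$, equation \eqref{eq:detq-commutation} together with the centrality of $\detq(A)$ in $\OqMat$-on-$A$ (and symmetrically for $D$) says exactly that $\detq(A)$ and $\detq(D)$ are \emph{normal elements}: for each generator $x$ of $\DqMat{0}$, the conjugation $\detq(A)\cdot x \cdot \detq(A)^{-1}$ is a scalar multiple (here a power of $q^{2}$) of $x$, and similarly for $\detq(D)$. Consequently the algebra automorphism $\sigma_A$ of $\DqMat{0}$ characterised by $\sigma_A(x) = \detq(A)\, x\, \detq(A)^{-1}$ on generators extends to all of $\DqMat{0}$, and likewise $\sigma_D$. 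It follows immediately that $\{\detq(A)^n\}_{n \geq 0}$ and $\{\detq(D)^m\}_{m\geq 0}$ are each left- and right-Ore sets, with Ore denominators produced explicitly by $\detq(A)^n\, x = \sigma_A^n(x)\, \detq(A)^n$.

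The final step is to check that the multiplicative set generated by both elements jointly is still Ore, which reduces to verifying that $\detq(A)$ and $\detq(D)$ $q$-commute with one another; this is a two-line computation from \eqref{eq:detq-commutation} (the two generators of $\detq(A)$ each pick up a factor of $q^{2}$ when commuted past $\detq(D)$, yielding $\detq(A)\, \detq(D) = q^{4}\, \detq(D)\, \detq(A)$). Hence the automorphisms $\sigma_A, \sigma_D$ commute up to inner ones and the joint multiplicative set $S$ consists of normal elements. The only subtle point in executing the plan is bookkeeping the $q$-powers when pushing $\detq(A)$ and $\detq(D)$ past arbitrary monomials, but this is purely computational and follows mechanically from \eqref{eq:detq-commutation}.
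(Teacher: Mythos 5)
Your proposal is correct and matches the paper's (essentially unwritten) argument: the paper derives the proposition directly from the commutation relation \eqref{eq:detq-commutation}, which for $\varepsilon=1$ gives centrality and for $\varepsilon=0$ exhibits $\detq(A)$, $\detq(D)$ as normal elements, exactly as you do. The only cosmetic caveat is that you should not phrase $\sigma_A$ as conjugation by $\detq(A)^{-1}$ (no inverses exist yet in $\DqMat{0}$); but your explicit identity $\detq(A)^n\,x=\sigma_A^n(x)\,\detq(A)^n$, with $\sigma_A$ the rescaling automorphism (well defined since all defining relations are homogeneous in the $\partial$-degree), already yields the Ore condition without them.
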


Localising the $q$-determinants in $\DqMat{0}$ gives $\DqGL{2}= \OqGL{2}\sharp\OqGL{2}$ while specialising $\DqMat{1}$ to $\detq(A) = 1$ and $\detq(D)= 1$ gives $\DqSL{2} = \OqSL{2}\sharp\OqSL{2}$. 

\begin{definition}
	The algebra of $q$-difference operators on $\GL_{2}$ is the localisation 
	\begin{equation*}
		\DqGL{2} := S^{-1}\DqMat{0}~.
	\end{equation*}
    The algebra of $q$-difference operators on $\SL_2$ is the specialisation 
    \begin{equation*}
        \DqSL{2} := \DqMat{1}/(\detq(A)-1, \detq(D)-1)
    \end{equation*}
\end{definition}

\begin{remark}
    The algebras $\DqSL{2}$ and $\DqGL{2}$ is what is called $\A_{1,1}$ in Section \ref{sec:int-sk}; see Corollary \ref{cor:ISA-separating} and Equation \ref{eq:ISA-torus}.
\end{remark}

We also give the concrete presentation for the algebras $\OqFRT$ and $\DqFRT = \OqG \sharp \OqFRT$ for $G=\GL_2, \SL_2$ whose relations are less involved. 
\begin{definition}
    The FRT algebra on $2\times 2$ matrices $\OqMatb$ is generated by the entries of 
    \[F = \begin{pmatrix}
        f^1_1 & f^1_2\\ 
        f^2_1 & f^2_2
    \end{pmatrix}\]
    subject to the relation
    \[R_{21} F_1F_2 = F_2 F_1 R\]
    or explicitly 
    \begin{align*}
        &[f^1_1, f^1_2]_q = 0 &&[f^1_2, f^2_1] = (q^{-1}-q)f^2_2 f^1_1\\
        &[f^1_1, f^2_1]_{q^{-1}} = 0 &&[f^1_2, f^2_2]_{q^{-1}} = 0\\
        &[f^1_1, f^2_2] = 0  &&[f^2_1, f^2_2]_q = 0~.
    \end{align*}
\end{definition}
The $q$-determinant in $\OqMatb$ is the central element \[\detq(F):= f^1_1f^2_2 - q f^1_2 f^2_1 ~.\]
\begin{definition}
    The FRT algebra $\OqGLb{2}$ is the localisation 
    \[\OqGLb{2}:= \OqMatb[\detq(F)^{-1}]\]
    while the algebra $\OqSLb{2}$ is the specialization
    \[\OqSLb{2}:= \OqMatb/(\detq(F)-1)~.\]
\end{definition}

\begin{definition}
    The algebra $\DqMatb{\varepsilon}$ for $\varepsilon=0,1$ is the algebra generated by the entries
    \[
    A= \begin{pmatrix}
        a^1_1 & a^1_2\\ 
        a^2_1 &a^2_2
    \end{pmatrix}
    \quad 
    \mathrm{and}
    \quad
    F= \begin{pmatrix}
        f^1_1 & f^1_2\\ 
        f^2_1 &f^2_2
    \end{pmatrix}\]
    subject to relations
\begin{align*}
	R_{21} A_1 R A_2 &= A_2 R_{21} A_1 R \\
	R_{21} F_1  F_2 &= F_2 F_1 R\\
	F_2 A_1  &= R_{21} A_1 R F_2~.
\end{align*} 
Explicitly, the last equation translates for $ j \in\{1,2\}$ to
\begin{align*}
    &[f^1_j,a^1_1]_{q^{2-\varepsilon}} = q^{-\varepsilon}\numberq{2}{q}\, a^1_2 f^2_j\\ 
    &[f^1_j,a^1_2]_{q^{1-\varepsilon}} = 0\\
     &[f^1_j,a^2_1]_{q^{1-\varepsilon}} = q^{-\varepsilon-1}\numberq{2}{q}\,a^2_2 f^2_j\\ 
    &[f^1_j,a^2_2]_{q^{-\varepsilon}} = 0\\
     &[f^2_j,a^1_1]_{q^{-\varepsilon}} = q^{-\varepsilon}\numberq{2}{q}\, a^2_1 f^1_j + q^{-\varepsilon-2}\numberq{2}{q}^2 \,a^2_2 f^2_j\\ 
    &[f^2_j,a^1_2]_{q^{1-\varepsilon}} = q^{-\varepsilon-1}\numberq{2}{q}\,a^2_2 f^1_j\\
     &[f^2_j,a^2_1]_{q^{1-\varepsilon}} = 0\\ 
    &[f^2_j,a^2_2]_{q^{2-\varepsilon}} = 0~.
\end{align*}
\end{definition}
 The $q$-determinants $\detq(A) = a^1_1a^2_2 - q^2 a^1_2 a^2_1$ and $\detq(F) = f^1_1f^2_2 - qf^1_2f^2_1$ are central in $\DqMatb{1}$ while in $\DqMatb{0}$ they $q^2$-commute in the following way
 \begin{equation}\label{eq:detqF-commutation}
     f^i_j \,\detq(A) = q^2 \detq(A)\, f^i_j\quad \text{and }\quad \detq(F)\,a^i_j = q^2 a^i_j\, \detq(F)
 \end{equation}
 In particular, we can define $\DqGLb{2}$ and $\DqSLb{2}$ in the following way. 
 \begin{definition}
     The algebra $\DqGLb{2}$ is the localisation 
     \[\DqGLb{2}:= \DqMatb{0}[\detq(A)^{-1}\detq(F)^{-1}]\]
     while the algebra $\DqSLb{2}$ is the specialisation 
     \[\DqSLb{2} := \DqMatb{1}/(\detq(A)-1, \detq(F)-1)~.\]
 \end{definition}

\begin{remark}
    The algebras $\DqSLb{2}$ and $\DqGLb{2}$ are what are called $\A_{0,2}$ in Section \ref{sec:int-sk}; see Corollary \ref{cor:ISA-Sgb} and Equation \eqref{eq:ISA-ann}.
\end{remark}
 
Equation~\eqref{eq:braided-Oq-Oq} gave a presentation for the braided tensor product $\DqMat{\varepsilon}\totimes\DqMat{\varepsilon}$. Similarly, the braided tensor product $\DqMat{\varepsilon}\totimes \DqMatb{\varepsilon}$ is generated by the entries of
\[A=(a^i_j),\quad D=(\partial^i_j), \quad \tilde{A}=(\tilde{a}^i_j),\quad \tilde{F}=(\tilde{f}^i_j)\]
with braided relations for $B\in \{A,D\}$
\[\tilde{A}_1 R B_2 R^{-1} = RB_2R^{-1}\tilde{A}_1 \]
and
\begin{equation}\label{eq:braided-Oq-OqFRT}
    \tilde{F}_1 B_2 = R B_2 R^{-1} \tilde{F}_1~.
\end{equation}
Equation $\eqref{eq:braided-Oq-OqFRT}$ is given explicitly for $j\in\{1,2\}$ as 
\begin{align*}
    &[\tilde{f}^1_j,a^1_1] = 0\\ 
    &[\tilde{f}^1_j,a^1_2]_{q} = 0\\ 
    &[\tilde{f}^1_j,a^2_1]_{q^{-1}} = 0\\ 
    &[\tilde{f}^1_j,a^2_2] = 0\\ 
    &[\tilde{f}^2_j,a^1_1] = q^{-2}\numberq{2}{q}a^2_1 \tilde{f}^1_j\\ 
    &[\tilde{f}^2_j,a^1_2]_{q^{-1}} = q\numberq{-2}{q}(a^1_1 -a^2_2)\tilde{f}^1_j\\ 
    &[\tilde{f}^2_j,a^2_1]_q = 0\\ 
    &[\tilde{f}^2_j,a^2_2] = -\numberq{2}{q}a^2_1 \tilde{f}^1_j
\end{align*}
 
Consider the following algebra maps of $\OqG$ in $\DqG$ resp.\ $\DqFRT$ for $G=\GL_2,\SL_2$: 
\begin{align}
	&\alpha: \OqG\hookrightarrow \DqG,~ L\mapsto A\label{eq:alpha-embd}\\
	& \gamma:\OqG \hookrightarrow \DqFRT, ~L \mapsto A\label{eq:gamma-embd}\\
	& \mu: \OqGL{2}\rightarrow \DqGL{2},~ L\mapsto DA^{-1}D^{-1}A \label{eq:moment-embd}
\end{align}
Here $L$, $A$ and $D$ denote the matrix of generators that we have been using. 
In terms of internal skein algebras these maps correspond to the annuli embeddings as follows: The map $\alpha$ in \eqref{eq:alpha-embd} corresponds to the embedding of the annulus in the once-punctured torus along the first handle-attachment. The map $\gamma$ in \eqref{eq:gamma-embd} corresponds to the the annulus embedding in the with two gates. The map $\mu$ in \eqref{eq:moment-embd} corresponds to the annulus embedding in the once-punctured torus along the boundary and is the \textbf{quantum moment map}.

\begin{lemma}
	We have the following square of embeddings and all algebra morphisms are flat: 
	\[\begin{tikzcd}
		\OqMat \arrow[r,hookrightarrow, "\alpha"]\arrow[d,hookrightarrow]& \DqMat{0} \arrow[d, hookrightarrow]\\
		\OqGL{2} \arrow[r, "\alpha", hookrightarrow]& \DqGL{2}
	\end{tikzcd}
	\]
\end{lemma}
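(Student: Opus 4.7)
The plan is to separately verify (a) commutativity of the square, (b) injectivity of each of the four maps, and (c) flatness of each of them, the key ingredient throughout being the smash product/PBW decomposition of $\DqMat{0}$ and the standard fact that Ore localisation is flat and injective at non-zero-divisors.

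First I would observe that the two vertical arrows are by definition the Ore localisations at $\detq(L)$ and at the set $S=\{\detq(A)^n\detq(D)^m\}$, respectively.  Commutativity of the square is then immediate: both composites send the matrix $L$ to $A$, and $\alpha(\detq(L))=\detq(A)\in S$, so the universal property of the localisation $\OqMat\to\OqGL{2}$ produces a unique map filling in the square, which agrees with $\alpha$ on generators.  Injectivity and flatness of the vertical arrows then follow from the standard theory of Ore localisation: the element $\detq(L)$ is central in $\OqMat$, the set $S$ has been shown to be Ore in $\DqMat{0}$, and in both cases the relevant elements are regular (non-zero-divisors) because $\OqMat$ and $\DqMat{0}$ are iterated Ore/smash-product extensions of the polynomial ring and hence are Noetherian domains.

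For the two horizontal arrows, I would exploit the smash-product presentation $\DqMat{0}=\OqMat\sharp\OqMat$ and analyse the cross-relations involving $\partial^i_j$ and $a^k_l$ listed earlier: each one takes the form $[\partial^i_j,a^k_l]_c=(\text{quadratic polynomial of mixed }a\partial\text{ terms of lower total lexicographic order})$.  Using these as straightening rules (a Bergman diamond-lemma/PBW argument) one gets a $\Cqloc$-basis of $\DqMat{0}$ consisting of ordered monomials $a^{I}\,\partial^{J}$, where the $a^{I}$ range over an ordered-monomial basis of the subalgebra generated by the $a^i_j$.  This establishes two things simultaneously: the subalgebra generated by $A$ is free on the $a^I$, hence isomorphic to $\OqMat$, so $\alpha$ is injective; and $\DqMat{0}$ is free as a left $\OqMat$-module via $\alpha$, with basis the monomials $\partial^J$.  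Freeness in particular gives flatness.

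Finally, flatness and injectivity of the bottom arrow $\alpha\colon\OqGL{2}\to\DqGL{2}$ follow by base change.  Concretely, because the square commutes and $S$ contains the image of the central multiplicative set generated by $\detq(L)$, the bottom map is obtained from the top by tensoring over $\OqMat$ with the flat localisation $\OqGL{2}$; since the top map is flat and a localisation of a flat map is flat, $\alpha$ is flat on the bottom, and since localisation preserves injectivity in this setting it remains an embedding.  The main obstacle I expect is the PBW step in the middle paragraph: one needs to check that the explicit cross-relations are confluent so that the ordered monomials really form a basis.  However, the straightening relations are strictly degree-reducing in the lexicographic order on $(|I|,|J|)$ (the right-hand sides involve one fewer $\partial$ or are purely quadratic in the $a$'s), so the diamond lemma applies and the argument goes through.
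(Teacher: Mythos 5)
Your proposal follows essentially the same route as the paper: the vertical maps are flat and injective because they are Ore localisations of domains, and the horizontal maps are flat because $\DqMat{0}$ (resp.\ $\DqGL{2}$) is free over the $\OqMat$ (resp.\ $\OqGL{2}$) factor. Two details are worth flagging. First, the freeness you derive via a diamond-lemma computation is in the paper simply a consequence of the definition: the algebra is a twisted (braided) tensor product, so as a module over either factor it is free by construction; moreover your stated termination argument is not quite right, since the cross-relations rewrite a wrongly ordered $\partial a$ term into sums of terms of the \emph{same} bidegree $(1,1)$ rather than terms with ``one fewer $\partial$'', so if you insist on the rewriting-system route you need an order on the individual generators, not just on $(|I|,|J|)$. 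Second, your base-change description of the bottom arrow is slightly off: $\OqGL{2}\otimes_{\OqMat}\DqMat{0}$ only inverts $\detq(A)$, whereas $\DqGL{2}$ also inverts $\detq(D)$; the conclusion survives because the remaining step is a further Ore localisation and a composite of flat maps is flat, but as written the identification is inaccurate. Neither point is fatal, and with these repairs your argument agrees with the paper's.
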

\begin{proof}
    The canonical localisation morphisms $\OqMat \rightarrow \OqGL{2}$ resp.\ $\DqMat{0} \rightarrow \DqGL{2}$ are flat, since localisation is flat, and they are injective as $\OqMat$ resp.\ $\DqMat{0}$ are domains. Flatness of $\alpha$ follows because each algebra $\DqMat{0}$, $\DqGL{2}$ being defined as a twisted tensor product is in fact free over each factor. 
\end{proof}

\begin{notation}
We will mostly treat the two cases $G=\GL_2,\SL_2$ in parallel for the remainder of the paper, and so we will write 
\[\A_{g,r} = \DqG^{\totimes g} \totimes \DqFRT^{\totimes r-1}
\] 
to mean both algebras, whenever a statement holds for both. 
Furthermore, we will write 
\[\Aplus_{g,r}=\DqMat{\varepsilon}^{\totimes g}\totimes \DqMatb{\varepsilon}^{\totimes r-1}\]
where $\varepsilon = 0$ for $ \GL_2$ and $\varepsilon =1 $ for $ \SL_2$. The two cases will be invoked separately only within proofs. In particular, in this notation: 
\begin{enumerate}
\item $\A_{g,r} = \Aplus_{g,r}[\detq^{-1}]$ for $\GL_2$ (inverting all $q$-determinants) and 
\item $\A_{g,r} = \Aplus_{g,r}/\detq-1$ for $\SL_2$ (specialising all $q$-determinants). 
\end{enumerate}
For $\GL_2$ localisation defines the functor
\[\detq^{-1}: \Aplus_{g,r}\modu \rightarrow \A_{g,r}\modu\]
and for $\SL_2$ we have the equivalence between the subcategory of $\Aplus_{g,r}$-modules (scheme-theoretically) supported at $\detq =1 $ and the category of $\A_{g,r}$-modules
\[
\Aplus_{g,r}\modu^{\detq =1} \simeq \A_{g,r}\modu~.
\]
\end{notation}

\begin{proposition}\label{prop:flat-ann-embd}
	The algebra map obtained from any annular embeddings (see \eqref{eq:alpha-embd} and \eqref{eq:gamma-embd}) 
	\[
	\OqMat\hookrightarrow \Aplus_{g,r}\]
	is flat for the algebras.
\end{proposition}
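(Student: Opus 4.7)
The strategy is to factor the embedding $\OqMat\hookrightarrow \Aplus_{g,r}$ through a single tensor factor and show that each step in the composition is in fact free (hence flat). Both annular embeddings target a single factor: the map $\alpha$ lands in one $\DqMat{\varepsilon}$-factor via $L\mapsto A$, and the map $\gamma$ lands in one $\DqMatb{\varepsilon}$-factor via $L\mapsto A$. Thus the composition to be shown flat factors as
\[
\OqMat \xrightarrow{\alpha\ \text{or}\ \gamma} B \hookrightarrow \Aplus_{g,r},
\]
where $B$ is either $\DqMat{\varepsilon}$ or $\DqMatb{\varepsilon}$, included as one of the braided tensor factors of $\Aplus_{g,r}$.

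The first arrow is flat by the smash product structure. Indeed, $\DqMat{\varepsilon} = \OqMat\sharp\OqMat$ and $\DqMatb{\varepsilon}=\OqMat\sharp\OqMatb$ are smash products inside $\Ahat$, which immediately yields a PBW-type decomposition
\[
\DqMat{\varepsilon}\cong \OqMat\otimes_{\field}\OqMat,\qquad \DqMatb{\varepsilon}\cong \OqMat\otimes_{\field}\OqMatb
\]
as left $\OqMat$-modules, with $\OqMat$ acting on the left factor. In particular $B$ is free (hence flat) as a left $\OqMat$-module via the embedding $\alpha$ or $\gamma$.

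The second arrow is flat by the braided tensor product structure. Writing $\Aplus_{g,r}=B_1\totimes\cdots\totimes B_n$ with one of the $B_i=B$, the underlying vector space is $B_1\otimes_{\field}\cdots\otimes_{\field}B_n$, and the relations involve only invertible braidings between factors (see e.g.\ \eqref{eq:braided-Oq-Oq} and \eqref{eq:braided-Oq-OqFRT}). It follows that the inclusion $B\hookrightarrow \Aplus_{g,r}$ of the $i$-th factor exhibits $\Aplus_{g,r}$ as a free left (and right) $B$-module with basis given by pure tensors having $1$ in the $i$-th slot. The only point requiring care is verifying that the braiding isomorphisms restrict to bijections when one slot is fixed to a given element; this is automatic from invertibility of the braiding.

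Composing these two flat (in fact free) steps, we conclude that $\OqMat\hookrightarrow\Aplus_{g,r}$ is flat. The main (mild) obstacle is organising the braided PBW argument in the second step so that it applies uniformly to both the $\DqMat{\varepsilon}$-factor and the $\DqMatb{\varepsilon}$-factor cases; once the freeness of each factor in the braided tensor product is recorded, flatness of the full embedding is immediate.
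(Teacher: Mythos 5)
Your proposal is correct, but it proves the statement by a genuinely different route from the paper. The paper's proof is a one-line degeneration argument: it asserts that it suffices to check flatness at $q=1$, where the map becomes the classical inclusion $\O(\Mat_2)\hookrightarrow\O(\Mat_2^{2(g+r-1)})$ dual to the (flat) projection $\Mat_2^{2(g+r-1)}\to\Mat_2$. You instead argue directly at generic $q$: factor the embedding through the single tensor factor $\DqMat{\varepsilon}$ or $\DqMatb{\varepsilon}$, use the smash-product (PBW) decomposition to see that factor is free over the image of $\alpha$ resp.\ $\gamma$, and then use invertibility of the braiding to see that the braided tensor product $\Aplus_{g,r}$ is free over each of its factors. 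This is in fact the same style of argument the paper itself uses in the unlabeled lemma just before (``each algebra \ldots being defined as a twisted tensor product is in fact free over each factor''), so your proof extends the paper's own single-factor observation rather than its stated proof of this proposition. What your route buys: it yields the stronger conclusion of freeness, works uniformly for both the $\DqG$- and $\DqFRT$-type factors, and avoids having to justify the ``suffices to check at $q=1$'' reduction, which is itself a nontrivial flat-family argument; what the paper's route buys is brevity and a clear geometric picture. The only points to nail down in a full write-up are (i) that the image of $\alpha$/$\gamma$ is the factor on the correct side of the PBW ordering dictated by the cross relations (it is: $A$-monomials can be moved to the left of $D$- resp.\ $F$-monomials), and (ii) the freeness of a braided tensor product over a \emph{middle} factor, where left multiplication requires braiding past earlier tensorands; your appeal to invertibility of the braiding handles this, but it deserves an explicit sentence.
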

\begin{proof}
It suffices to check the statement at $q=1$ which then follows from the classical $\O(\Mat_2)\hookrightarrow \O\left(\Mat_2^{2(g+r-1)}\right)$ which corresponds to a (flat) projection map $\Mat_2^{2(g+r-1)}\rightarrow \Mat_2$.
\end{proof}

\subsection{Holonomicity of \texorpdfstring{$\A_{g,r}$-modules}{modules}}

The internal skein algebras $\A_{g,r}$ are deformation quantisation algebras of the Fock-Rosly Poisson variety $X:= G^{2(g+r-1)}$ \cite{STS,Fock-Rosly, Alekseev00}. While the latter is not everywhere symplectic, it contains an open dense symplectic leaf $X_s: = \mu^{-1}(G^{\ast})$ \cite{GanevJS} on which all strongly equivariant modules -- and in particular all internal skein modules -- are supported. 

For the rest of this paper, unless otherwise specified, we will always work over $q$ generic. Thus, by abuse of previous notation, we write $\A_{g,r}$ instead of $\A_{g,r}^{\mathrm{loc}}$ as $q$ is declared to be generic. In particular, we call a finitely generated $\A_{g,r}$-module $M$ holonomic if $\GKdim(M) = (g+r-1)\dim(G)$. This is equivalent to asking that $SS(M) \cap X_s$ is Lagrangian in $X_s$, see Definition~\ref{def:holonomic} and Proposition~\ref{prop:DQ-GKdims}.

\begin{proposition}\label{prop:Agr-holonomic} We have the following characterisations of holonomic $\A_{g,r}$-modules by comparing with $\Aplus_{g,r}$:
\begin{enumerate}
    \item  For $\GL_2$, a finitely generated $\A_{g,r}$-module $M$ is holonomic if and only if there exists a finitely generated $\Aplus_{g,r}$-submodule $M'$ such that 
    \begin{equation*}
        M \cong \A_{g,r} \otimes_{\Aplus_{g,r}} M'
    \end{equation*}
    and $\GKdim(M') \leq 4(g+r-1)$.
    \item For $\SL_2$, a holonomic $\A_{g,r}$-module $M$ is the same as a finitely generated $\Aplus_{g,r}$-module scheme-theoretically supported at $\detq = 1$ and with dimension $\GKdim(M) = 3(g+r-1)$.
\end{enumerate}
\end{proposition}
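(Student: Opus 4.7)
The plan is to treat the $\GL_2$ and $\SL_2$ cases separately, leveraging the two different relationships between $\A_{g,r}$ and $\Aplus_{g,r}$ just recorded. The common input is Bernstein's inequality on the $\A_{g,r}$-side: since $\A_{g,r}$ deformation-quantises the symplectic variety $G^{2(g+r-1)}$, Proposition~\ref{prop:Gabber} gives that $\mathrm{SS}(M)$ is coisotropic in $G^{2(g+r-1)}$ and hence $\GKdim(M) \geq (g+r-1)\dim G$ for any nonzero finitely generated $\A_{g,r}$-module $M$. So the threshold appearing in each clause of the proposition is forced to be attained from below.

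For $\GL_2$, where $\A_{g,r} = \Aplus_{g,r}[\detq^{-1}]$ is an Ore localisation, the ($\Leftarrow$) direction is direct: given $M \cong \A_{g,r}\otimes_{\Aplus_{g,r}} M'$, lift any good filtration on $M'$ to one on $M$; the associated graded of $M$ is then $\mathrm{gr}(M')[\mathrm{gr}(\detq)^{-1}]$, supported on $\supp(\mathrm{gr}(M'))\cap \GL_2^{2(g+r-1)}$, so $\GKdim(M)\le \GKdim(M') \le 4(g+r-1)$. Bernstein's inequality closes the gap, giving $M$ holonomic. For the ($\Rightarrow$) direction, fix $\A_{g,r}$-generators $m_1,\dots,m_k$ of $M$ and set $M' := \Aplus_{g,r}\cdot\{m_1,\dots,m_k\}\subset M$. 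Then $M'$ is finitely generated and $\A_{g,r}\otimes_{\Aplus_{g,r}} M'\cong M$ automatically. The content is to bound $\GKdim(M')\le 4(g+r-1)$: a priori $\supp(\mathrm{gr}(M'))$ may contain components lying entirely inside $\{\detq=0\}$. Since $\detq$ is a non-zero divisor in $\Aplus_{g,r}$ that acts invertibly on $M$, one replaces $M'$ by the ascending union $\bigcup_{n\ge 0}\detq^{-n}M'\subset M$, which equals $M$, and uses an Artin--Rees/stabilisation argument to pick a finitely generated $\Aplus_{g,r}$-submodule in this chain whose associated graded has all primes detected on $\GL_2^{2(g+r-1)}$. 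For this refined $M'$ we have $\dim\supp(\mathrm{gr}(M')) = \dim(\supp(\mathrm{gr}(M'))\cap \GL_2^{2(g+r-1)}) = \GKdim(M) = 4(g+r-1)$, as required.

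For $\SL_2$, where $\A_{g,r} = \Aplus_{g,r}/(\detq-1)$, equations \eqref{eq:detq-commutation} and \eqref{eq:detqF-commutation} with $\varepsilon=1$ show that every $q$-determinant is central in $\Aplus_{g,r}$, so $\A_{g,r}$ is the quotient by a central ideal. Hence finitely generated $\A_{g,r}$-modules are exactly finitely generated $\Aplus_{g,r}$-modules annihilated by $\detq-1$, i.e.\ scheme-theoretically supported at $\detq=1$. For any such $M$ with generators $\{m_i\}$, the two filtrations $(\Aplus_{g,r})_{\le n}\cdot\{m_i\}$ and $(\A_{g,r})_{\le n}\cdot\{m_i\}$ agree as subspaces of $M$, because the image of $(\Aplus_{g,r})_{\le n}$ in $\A_{g,r}$ is $(\A_{g,r})_{\le n}$ and $\detq$ acts as $1$. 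Therefore $\GKdim_{\A_{g,r}}(M) = \GKdim_{\Aplus_{g,r}}(M)$, and $M$ is holonomic over $\A_{g,r}$ iff this common value equals $\tfrac12\dim G^{2(g+r-1)} = 3(g+r-1)$, which is the stated characterisation.

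The main obstacle is the refinement step in the $\GL_2$ forward direction, namely guaranteeing the existence of a finitely generated $\Aplus_{g,r}$-submodule $M'\subset M$ whose associated graded has no component contained in $\{\detq=0\}$. This is a standard but nontrivial Artin--Rees-type argument that relies crucially on $\detq$ being a non-zero divisor in $\Aplus_{g,r}$ and acting invertibly on $M$; once it is in place, the proposition reduces to the definition of holonomicity together with Bernstein's inequality. The $\SL_2$ case is essentially formal once the centrality of $\detq$ in $\Aplus_{g,r}$ for $\varepsilon=1$ is invoked.
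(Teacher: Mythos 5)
Your part (2) and the ($\Leftarrow$) half of part (1) are essentially the paper's argument (the paper obtains $\GKdim(M)\leq\GKdim(M')$ from $\grade(M)\geq\grade(M')$, flatness of localisation and the Cohen--Macaulay property of Proposition~\ref{prop:DQ-GKdims}, rather than from an associated-graded computation, but the content is the same). The problem is exactly the step you flag as the main obstacle in the ($\Rightarrow$) half of part (1): the proposed Artin--Rees/stabilisation along the chain $\detq^{-n}M'$ would not do what you want. Conjugation by $\detq$ is a filtered algebra automorphism of $\Aplus_{g,r}$ (it rescales the generators by powers of $q^{\pm 2}$), so each $\detq^{-n}M'$ is merely a twist of $M'$ by this automorphism; every term of the chain has the same GK dimension and the same dimension of associated graded support, so passing to a later term cannot remove any alleged bad component, and the chain does not stabilise, since its union is $M$, which is in general not finitely generated over $\Aplus_{g,r}$. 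As written, this step fails.

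Fortunately it is also unnecessary. Take generators of $\A_{g,r}$ to be the generators of $\Aplus_{g,r}$ together with the inverted $q$-determinants; then for $M'=\Aplus_{g,r}\cdot\{m_1,\dots,m_k\}$ one has $(\Aplus_{g,r})_{\leq n}\{m_i\}\subseteq(\A_{g,r})_{\leq n}\{m_i\}$, and since GK dimension is independent of the chosen generating sets this gives $\GKdim_{\Aplus_{g,r}}(M')\leq\GKdim_{\A_{g,r}}(M)=4(g+r-1)$ directly. Components of $\supp\operatorname{gr}(M')$ inside $\{\detq=0\}$ are harmless: they contribute at most $\GKdim(M')$, which is already bounded, and the statement only requires the inequality $\GKdim(M')\leq 4(g+r-1)$, not the equality you aim for. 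This is in effect what the paper does (pick any finitely generated lattice; its extra remark is that by the ascending chain condition there is a largest one of maximal GK dimension). Two smaller points: $G^{2(g+r-1)}$ with the Fock--Rosly structure is not symplectic, only generically so, and at generic $q$ the paper simply \emph{defines} holonomicity by $\GKdim(M)=(g+r-1)\dim G$; so the lower bound you invoke should be routed through that definition and the open symplectic leaf rather than through Proposition~\ref{prop:Gabber} applied to all of $G^{2(g+r-1)}$. Your identification $\operatorname{gr}(M)\cong\operatorname{gr}(M')[\operatorname{gr}(\detq)^{-1}]$ in the ($\Leftarrow$) direction also deserves a word of justification (it uses that $\detq$ is normal), or can be replaced by the paper's grade argument.
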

\begin{proof}
    \begin{enumerate}
\item If $M'$ is an $\Aplus_{g,r}$-lattice of $M$ satisfying $\GKdim(M')\leq 4(g+r-1)$, then $j(M)\geq j(M')$ by flatness of localisation and thus $\GKdim(M)\leq \GKdim(M')\leq 4(g+r-1)$ by Proposition~\ref{prop:DQ-GKdims}. On the other hand, consider a holonomic $\A_{g,r}$-module $M$ and pick a finitely generated $\Aplus_{g,r}$-lattice $M'$. In fact, by the ascending chain condition there is a unique largest $M'$ such that $\GKdim(M') = 4(g+r-1)$. 
\item This follows tautologically from the definition. 
    \end{enumerate}
\end{proof}

\begin{remark}\label{rem:thick-subcat}
The full subcategory $\A_{g,r}\modu_{\mathrm{hol}}$ of holonomic $\A_{g,r}$-modules forms a thick\footnote{A full subcategory is called \textit{thick} if is closed under subobjects, quotients and extensions.} subcategory of $\A_{g,r}\modu_{\mathrm{fg}}$, the category of finitely generated $\A_{g,r}$-modules. Similarly, the derived category $D^b_{\mathrm{hol}}(\A_{g,r})$ of bounded complexes in $\A_{g,r}\modu_{\mathrm{fg}}$ with holonomic cohomologies is thick in $D^b(\A_{g,r})$.
\end{remark}

\subsection{The Koszul resolution and the inverse image}\label{subsec:GL2-Koszul}

The algebra $R:= \OqMat$ is a (quadratic) PBW algebra in the sense of \cite{quadratic} with the ordered generating set:
\[\{x_1 := a^2_2 < x_2 := a_1^1 < x_3 := a_2^1 < x_4 := a_1^2\}\]
The Koszul dual $R^!$ is then a PBW algebra with ordered generators: 
\[\{y_1 := x_4^\ast  < y_2 := x_3^\ast < y_3 := x_2^\ast < y_4 := x_1^\ast\}\]
and relations: 
\begin{align*}
	&y_1^2 = y_2^2 = y_3^2 = 0 &
	  y_4^2 &= -\numberq{-2}{q} y_1 y_2 \\ 
	 &\{y_2,y_1\} = 0 &
	  \{y_3, y_1\} &= 0\\
	  &\{y_4,y_1\}_{q^2} =  -\numberq{-2}{q}y_1 y_3& 
	  \{y_3, y_2\} &=0  
	 \\ 
	 &\{y_4, y_2\}_{q^{-2}} =  \numberq{-2}{q}q^{-2} y_2 y_3&
	 \{y_4,y_3\} &= \numberq{-2}{q}y_1 y_2
\end{align*}

Let $A$ be the matrix of generators of $R = \OqMat$. Following standard methods for Koszul algebras \cite{quadratic}, we can find the following Koszul resolution of the right $R$-module $\field = (A-I)\backslash R$: 
\begin{equation}\label{eq:Koszul-GL2}
    0 \rightarrow R \rightarrow R^{\oplus 4} \rightarrow R^{\oplus 6} \rightarrow R^{\oplus 4} \rightarrow R
\end{equation}
with differentials
\begin{align}\label{eq:Koszul-differentials-GL2}
\delta(\widehat{x}_1)&= a^2_2 -1\nonumber\\
\delta(\widehat{x}_2)&= a^1_1 -1\nonumber\\
\delta(\widehat{x}_3)&= a^1_2\nonumber\\
\delta(\widehat{x}_4)&= a^2_1\nonumber\\
\delta(\widehat{r}_1)&= \widehat{x}_1(a^1_1-1) - \widehat{x}_2(a^2_2-1)\nonumber\\
\delta(\widehat{r}_2)&= \widehat{x}_1a^1_2 - \widehat{x}_3 q^{2}(a^2_2-q^{-2})\nonumber\\
\delta(\widehat{r}_3)&= \widehat{x}_1a^2_1 - \widehat{x}_4 q^{-2}(a^2_2-q^{2})\nonumber\\
\delta(\widehat{r}_4) &= \widehat{x}_2 a^1_2 +\widehat{x}_3 \left(q^{-2}\numberq{2}{q}a^2_2 - (a^1_1-1)\right) \nonumber\\ 
\delta(\widehat{r}_5) &= \widehat{x}_2 a^2_1 -\widehat{x}_4 \left(q^{-2}\numberq{-2}{q}a^2_2 + (a^1_1-1)\right)\nonumber\\ 
\delta(\widehat{r}_6)&= \widehat{x}_1\numberq{-2}{q}a^2_2  - \widehat{x}_2\numberq{-2}{q}a^2_2 + \widehat{x}_3 a^2_1 - \widehat{x}_4 a^1_2\nonumber\\ 
\delta(\widehat{c}_1)&=\widehat{r}_1 a^1_2 + \widehat{r}_2\left(q^{-2} \numberq{2}{q}a^2_2 - (a^1_1 -1 )\right)+ \widehat{r}_4 q^{2}(a^2_2-q^{-2})\nonumber\\
\delta(\widehat{c}_2)&= \widehat{r}_1 a^2_1- \widehat{r}_3 \left(q^{-2}\numberq{-2}{q}a^2_2 + (a^1_1 -1)\right)+ \widehat{r}_5 q^{-2}(a^2_2-q^{2})\nonumber\\
\delta(\widehat{c}_3)&= \widehat{r}_1q^{-2}\numberq{2}{q}a^2_2 + \widehat{r}_2 a^2_1 -\widehat{r}_3 a^1_2+ \widehat{r}_6(a^2_2 -1)\nonumber\\ 
\delta(\widehat{c}_4)&= \widehat{r}_1 q^2\numberq{-2}{q}a^2_2 + \widehat{r}_4 a^2_1 -\widehat{r}_5 a^1_2 + \widehat{r}_6 (a^1_1 -1)\nonumber \\ 
\delta(\widehat{k}) &= \widehat{c}_1 a^2_1  - \widehat{c}_2 a^1_2 + \widehat{c}_3 (a^1_1 -1)- \widehat{c}_4 (a^2_2-1) 
\end{align}
where $\{\widehat{k}\}$, $\{\widehat{c}_1,\dots,\widehat{c}_4\}$, $\{\widehat{r}_1,\dots,\widehat{r}_6\}$ and $\{\widehat{x}_1,\dots ,\widehat{x}_4\}$ denote $R$-basis of $R$, $R^{\oplus4}$, $R^{\oplus 6}$ and $R^{\oplus 4}$ (reading from left) in the Koszul resolution \ref{eq:Koszul-GL2}. 
At $q=1$, one retrieves the classical Koszul resolution.
 
Using flatness of the algebras (see Proposition \ref{prop:flat-ann-embd})
\begin{equation*}
	\OqMat \hookrightarrow \Aplus_{g,r} \hookrightarrow \A_{g,r}
\end{equation*}
in the $\GL_2$-case,
we further obtain a Koszul resolution
\begin{equation*}
	0 \rightarrow \A_{g,r} \rightarrow \A_{g,r}^{\oplus 4} \rightarrow \A_{g,r}^{\oplus 6} \rightarrow \A_{g,r}^{\oplus 4} \rightarrow \A_{g,r} \rightarrow (A-I)\backslash\A_{g,r} \rightarrow 0~
\end{equation*}
which is a free resolution of the transfer bimodule $(A-I)\backslash\A_{g,r}$ as a right module. It is in fact an $\A'\mathrm{-}\A_{g,r}$-bimodule resolution where the left action by $\A'$ is twisted as it braids past the free basis. For instance, let $\tilde{b}^i_j$ denote generators from some copy $\OqG$ in $\A'$. Then using the PBW relations of \eqref{eq:braided-Oq-Oq} we deduce the following action of $\tilde{b}^1_1$ on $\bigoplus_{i=1}^{4}\widehat{x}_i\A_{g,r}$
\begin{align*}
&\tilde{b}^1_1 . \widehat{x}_1 a = \widehat{x}_1 \tilde{b}^1_1 a  - \numberq{-2}{q} \widehat{x}_4 \tilde{b}^1_2a \\
& \tilde{b}^1_1 . \widehat{x}_2 a = \widehat{x}_2 \tilde{b}^1_1 a  + \numberq{-2}{q} \widehat{x}_4 \tilde{b}^1_2a \\
&\tilde{b}^1_1 . \widehat{x}_3 a = \widehat{x}_3 \tilde{b}^1_1 a  - \numberq{-2}{q} \widehat{x}_2 \tilde{b}^1_2a + \numberq{-2}{q}\widehat{x}_1 \tilde{b}^1_2 a \\
&\tilde{b}^1_1 . \widehat{x}_4 a = \widehat{x}_4 \tilde{b}^1_1 a ~.
\end{align*}
Similarly, one has we relations for any other generators and degrees of the complex. 

In particular, we have determined a Koszul resolution for the transfer bimodules from \eqref{eq:ns-transfer} and \eqref{eq:s-transfer}. 
Thus, the derived inverse image $Li^\ast M$ resp.\ $L\delta(M)$ of an $\A_{g,r}$-module $M$ is represented by the complex: 
\begin{equation*}
	0 \rightarrow M \rightarrow M^{\oplus 4} \rightarrow M^{\oplus 6} \rightarrow M^{\oplus 4} \rightarrow M 
\end{equation*}
We can also define the transfer bimodule $\Aplus_{g-1\rightarrow g}$ and write $Li^\ast_+ := \Aplus_{g-1\rightarrow g}\otimes^{\mathbb{L}} -$. In particular, 
if $M$ is a finitely generated $\A_g$-module with an $\Aplus_g$-lattice $M'$, we have $Li^\ast M \cong (Li^\ast_+M')[\detq^{-1}]$ and similarly for the separating case $L\delta_+$. 

The $\SL_2$-case is slightly easier due to the $q$-determinant relation. Indeed, the (right) $\OqSL{2}$-module $\field$ is isomorphic to 
\[
\field := (A-I)\backslash \OqSL{2} \cong (a^1_2,a^2_1,a^2_2-1)\backslash\OqSL{2}
\]
where in the last equation the generator $a^1_1-1$ was disregarded due to the $q$-determinant relation $a^1_1a^2_2 -q^2 a^1_2 a^2_1  =1 $. We obtain the following Koszul resolution of $\field$ 
\begin{equation}
    0 \rightarrow \OqSL{2}\rightarrow \OqSL{2}^{\oplus 3}\rightarrow \OqSL{2}^{\oplus3} \rightarrow \OqSL{2}\label{eq:Koszul-SL2}
\end{equation}
with differentials
\begin{align}\label{eq:Koszul-differentials-SL2}
    \delta(\widehat{x}_1) &= a^2_2 -1 \nonumber\\ 
    \delta(\widehat{x}_2) &= a^1_2  \nonumber\\
    \delta(\widehat{x}_3) &= a^2_1  \nonumber\\
    \delta(\widehat{r}_1) &=  \widehat{x}_1 a^1_2 - \widehat{x}_2 q^{2} (a^2_2 -q^{-2}) \nonumber\\
    \delta(\widehat{r}_2) &=  \widehat{x}_1 a^2_1 - \widehat{x}_3 q^{-2} (a^2_2 -q^{2}) \nonumber\\
    \delta(\widehat{r}_3) &=  -\widehat{x}_2 q^2 a^2_1  + \widehat{x}_3 a^1_2 - \widehat{x}_1 \numberq{-2}{q}(a^2_2 +1)\nonumber\\
    \delta(\widehat{c}) &= -\widehat{r}_1 q^2 a^2_1 + \widehat{r}_2 a^1_2 + \widehat{r}_3 (a^2_2-1)~
\end{align}
where $\{\widehat{c}\}$, $\{\widehat{r}_1,\widehat{r}_2,\widehat{r}_3\}$ and $\{\widehat{x}_1,\widehat{x}_2,\widehat{x}_3\}$ denote an $\OqSL{2}$ basis of the first, second, and third terms (reading from the left) in the Koszul resolution \eqref{eq:Koszul-SL2}. 

In particular, if $M$ is a $\A_g$-module for $G= \SL_2$ its inverse image  $Li^\ast M$ is represented by the complex
\[0 \rightarrow M \rightarrow M^{\oplus3} \rightarrow M^{\oplus 3} \rightarrow M \rightarrow 0~.\]

\subsection{Ore localisations}\label{subsec:Ore-Loc}

Associated to closed immersions $G^{g+r-2} \hookrightarrow G^{g+r-1}$ of codimension $m =\dim G$, we consider the associated open immersion $j:U\hookrightarrow G^{g+r-1}$. In this section, we will construct localisations along $U$ of $\A_{g,r}$-modules 
\begin{equation*}
	\phi: M \rightarrow j_\ast M_U~. 
\end{equation*} 
The notation is motivated by algebraic geometry and the theory of $D$-modules. However, as $U$ is not affine, we first find an affine cover $\{U_i\}_{i=1,\dots, m}$. 

\begin{remark}
	The technical obstruction in general will be to find Ore sets $S_{i} \subset \A_{g,r}$ such that $S^{-1}_{i}M$ is a deformation quantization of the classical (commutative) localization $(M_{q=1})|_{U_i}$.  
\end{remark}

The two types of immersions correspond to the previously defined $Li^\ast$ and $L\delta$. For both cases, we will write $A=(a^i_j)$ for the generators of the corresponding cycle $\OqG$. The generators $D=(\partial^i_j)$ will denote the complementary generators in $\DqG$ for the non-separating case and $F=(f^i_j)$ denote the complementary generators in $\DqFRT$ in the separating case. Generators of other tensorants in the braided tensor product will be collectively denoted by $\tilde{b}^i_j$ if they come from an $\OqG$-factor and $\tilde{f}^i_j$ if they come from an $\OqFRT$-factor.

\begin{example}
For $G = \GL_2$ we will take the affine cover with $U_1 = \{\det(A)-1 \neq 0\}, U_2= \{a^1_2 \neq 0\}, U_3 = \{a^2_1\neq 0\}$ and $U_4 = \{a_2^2-1\neq 0\}$. Note that we could have taken $U_1' = \{a_1^1-1\neq 0\}$ but we find the Ore set for $U_1$ to be more convenient.

For $G=\SL_2$ we only take $U_2,U_3$ and $U_4$ as above due to the $\detq(A)=1$ relation.
\end{example}

Then, the localization along $U$ should be described by the corresponding \v{C}ech complex: 
\begin{equation*}
	 0 \rightarrow j_\ast M_U \rightarrow \mathfrak{C}^1(M,U)\rightarrow \dots \rightarrow \mathfrak{C}^m(M, U )\rightarrow 0 
\end{equation*} 
where 
\begin{equation*}
\mathfrak{C}^k(M,U) := \bigoplus_{i_1,\dots ,i_k}(j_{i_1\dots i_k})_\ast M_{U_{i_1 \dots i_k}}~.
\end{equation*}
We now give a full definition for $(j_{i_1\dots i_k})_\ast M_{U_{i_1 \dots i_k}}$ as localization along certain Ore sets $S_{i_1\dots i_k} \subset \A_g$. 

Since we deal with localisation in non-commutative algebras we care about Ore sets, which we now briefly recall: 
\begin{definition}
	A subset $S$ in an algebra $A$ is called a \textbf{(left) Ore set} if it is: 
	\begin{enumerate}
		\item multiplicatively closed and
		\item for any pair $(s,a) \in S\times A$ the intersection $S\cdot a \cap A\cdot s$ is non-empty. 
	\end{enumerate}
\end{definition}
The second condition in the above definition is referred to as the \textbf{(left) Ore condition}. Explicitly, for some $s \in S$ and $a\in A$ there exist $\tilde{s}\in S$ and $\tilde{a}\in A$ such that
\begin{equation*}
	\tilde{s} \cdot a = \tilde{a} \cdot s~.
\end{equation*}

\begin{lemma}\label{lem:Ore-from-generators}
Let $A$ be a finitely generated algebra with generators $V = \{x_1,\dots , x_n\} \subset A$. Let $S\subset A$ be a commutative multiplicatively closed subset satisfying the Ore condition with respect to the generators, \ie 
	\begin{equation*}
		S\cdot x_i \cap A\cdot s \neq \emptyset \quad \forall (s,x_i)\in S\times V \subset S \times A
	\end{equation*}
	Then, $S$ is an Ore set.
\end{lemma}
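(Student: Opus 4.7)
The plan is to introduce the set
\[A^{\mathrm{Ore}} := \{a \in A \mid S \cdot a \cap A \cdot s \neq \emptyset \text{ for all } s \in S\}\]
of elements of $A$ satisfying the Ore condition against every $s \in S$, and to show that $A^{\mathrm{Ore}}$ is a subalgebra of $A$. Since the hypothesis guarantees $V \subset A^{\mathrm{Ore}}$ and $V$ generates $A$ as an algebra, this will force $A^{\mathrm{Ore}} = A$, which is precisely the desired statement.

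Checking that $A^{\mathrm{Ore}}$ contains scalars (and in particular $1_A$) is immediate, since scalars are central in $A$. For closure under multiplication, given $a,b \in A^{\mathrm{Ore}}$ and $s \in S$, I would first apply the Ore property of $b$ to produce $s_2 \in S$ and $b' \in A$ with $s_2 b = b' s$, then apply the Ore property of $a$ against the element $s_2 \in S$ to obtain $s_1 \in S$ and $a' \in A$ with $s_1 a = a' s_2$. Chaining these gives $s_1(ab) = a' s_2 b = (a' b')\,s$, so $ab \in A^{\mathrm{Ore}}$.

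Closure under addition is the only step where the commutativity of $S$ actually enters, and it is the main point on which the whole argument turns. For $a,b \in A^{\mathrm{Ore}}$ and $s \in S$, one first picks $s_1, s_2 \in S$ and $a', b' \in A$ with $s_1 a = a' s$ and $s_2 b = b' s$. Since $S$ is commutative and multiplicatively closed, $s_1 s_2 = s_2 s_1 \in S$, and so
\[s_1 s_2 (a+b) \;=\; s_2 (s_1 a) + s_1 (s_2 b) \;=\; (s_2 a' + s_1 b')\,s,\]
witnessing $a+b \in A^{\mathrm{Ore}}$. Without commutativity of $S$, the two left multipliers $s_1, s_2$ could not be combined in this clean way — aligning them in general would itself require the Ore condition we are trying to prove, which is a circularity. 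The hypothesis that $S$ is commutative is exactly what sidesteps this, so the remainder of the argument reduces to routine bookkeeping.
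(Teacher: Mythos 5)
Your proof is correct, and the underlying computations are essentially those of the paper's own argument: your "closure under multiplication" step is the paper's chaining of Ore solutions across a monomial $x_{i_k}\cdots x_{i_1}$, and your "closure under addition" step is exactly where the paper also invokes commutativity of $S$, forming a common left multiple $\prod_i s_i$ so the individual Ore witnesses can be aligned. The only genuine difference is organizational: rather than proving the Ore condition first for monomials and then for arbitrary linear combinations, you package the argument as the statement that
\[
A^{\mathrm{Ore}} = \{a \in A \mid S\cdot a \cap A\cdot s \neq \emptyset \ \forall s\in S\}
\]
is a subalgebra of $A$, and then conclude from $V\subset A^{\mathrm{Ore}}$. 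This framing is slightly cleaner and makes transparent what is being verified at each step; it also isolates nicely your correct observation that commutativity of $S$ is precisely what furnishes a common left multiple of $s_1,s_2$ without circularly presupposing an Ore-type condition on $S$ itself. Both proofs are complete; yours adds a small conceptual economy but no new mathematical input.
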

\begin{proof}
	We first prove that the Ore condition holds for all monomials in $V$. Consider a monomial $m=x_{i_k}\cdots x_{i_1}$ in $V$ and a fixed element $s\in S$. Then, by assumption there exist $s_1\in S$ and $a_1\in A$ such that $s_1 \cdot x_{i_1} = a_1 \cdot s$ and continuing by induction we also find $s_j\in S$ and $a_j$ for $j\in\{2,\dots,k\}$ such that $s_j\cdot x_{i_j} = a_j \cdot s_{j-1}$. Thus, we found the following Ore solution 
        \[s_k \cdot x_{i_k}\cdots x_{i_{1}} = a_k\cdots a_1 \cdot s~.\]
	
	We now show that the Ore condition is satisfied with respect to all elements in $A$. Any $a \in A$ can be written as a linear combination of monomials $m_i$ in $V$ 
	\begin{equation*}
		a = \sum_{i\in I}{\lambda_i ~ m_i}
	\end{equation*}
	By the first part of the proof, for any $s \in S$ there exist $s_i \in S$ and $\tilde{a}_i\in A$ such that 
	\[s_i \cdot m_i = \tilde{a}_i \cdot s~.\]
	In particular,
	\begin{equation*}
		\left(\prod_{i\in I}{s_i}\right) \cdot a= \sum_{i\in I}{\lambda_i \left(\prod_{j\neq i}{s_j}\right)\cdot s_i \cdot m_i} = \left(\sum_{i\in I}{\lambda_i \prod_{j\neq i}{s_j} \tilde{a}_i}\right) s
	\end{equation*}
    concluding our proof. 
\end{proof}
Given an Ore set $S \subset A $ one defines the \textit{Ore localisation} $S^{-1}A$ and for an $A$-module $M$ we define the $S^{-1}A$-module 
\[S^{-1}M:= S^{-1}A\otimes_{A} M\]
using the $S^{-1}A\mbox{-}A$-bimodule structure of $S^{-1}A$. The canonical map $\phi_S: M \rightarrow S^{-1}M, m \mapsto 1\otimes m$ is an $A$-module map and its kernel defines the $S$-\textit{torsion submodule} of $M$
\[t_{S}(M):= \ker(\phi_S) = \{m \in M\mid \exists s \in S: s.m =0\}~.\]
Note that the Ore condition is used to show that $t_S(M)$ is a submodule \cite[Lem.\ 4.21]{Goodearl-Warfield}. We say that $M$ is $S$-\textit{torsion} if $t_{S}(M) = M$ and it is $S$-\textit{torsion-free} if $t_S(M) = 0$. 

We now return to our affine cover $U_i$ and find appropriate Ore sets. The following proposition holds for any $q$, \ie over $\Cq$, but throughout this section we work with $q$ generic. 

\begin{proposition}\label{prop:Ore-sets}
	The sets $S_i$ monomially generated by: 
	\begin{equation*}
		T_1 = \left\{\detq(A) - q^{2k}\right\}_{k\in \mathbb{Z}}, \quad T_2 = \left\{a_{2}^1\right\}, \quad T_3 = \left\{a_{1}^2\right\}, \quad T_4 = \left\{(a_2^2 - q^{2k})\right\}_{k\in K}
	\end{equation*}
	form Ore sets in $\Aplus_{g,r}$, where $K = \mathbb{Z}$ for $\GL_2$ and $K= \frac{1}{2}\mathbb{Z}$ for $\SL_2$. In particular, they form Ore sets in $\A_{g,r}$.
\end{proposition}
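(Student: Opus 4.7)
The plan is to invoke Lemma~\ref{lem:Ore-from-generators}: each $T_i$ manifestly generates a commutative monoid in $\Aplus_{g,r}$, so it suffices to verify the left Ore condition against the generators of $\Aplus_{g,r}$, namely the entries of the matrices $A^{(\ell)}, D^{(\ell)}$ ($\ell=1,\dots,g$) and $F^{(m)}$ ($m=1,\dots,r-1$) from each tensorant, including braided copies from distinct tensorants.

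For $T_1$ (applicable only in the $\GL_2$ case) I would use three facts: $\detq(A)$ is central in its own $\OqG$ factor (it is a $\Uqgl{2}$-invariant), it $q^{\pm 2}$-commutes with the entries of $D$ and $F$ in the same tensorant via \eqref{eq:detq-commutation}--\eqref{eq:detqF-commutation}, and, being the quantum top exterior power of the defining representation, it commutes with generators of all other tensorants up to a scalar $q$-power under the braidings \eqref{eq:braided-Oq-Oq}--\eqref{eq:braided-Oq-OqFRT}. Consequently, for each generator $x$ there exists $\lambda\in\{0,\pm 2\}$ with
\[
(\detq(A)-q^{2k})\,x \;=\; q^{\lambda}\,x\,(\detq(A)-q^{2(k-\lambda/2)}),
\]
which is already the desired Ore identity with $\tilde s = \detq(A)-q^{2(k-\lambda/2)}\in T_1$.

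For $T_2, T_3, T_4$ the same strategy applies, but the commutator with a generator $x$ is now of the form
\[
s\cdot x \;=\; q^{\lambda(x)}\,x\cdot s' \;+\; \text{(correction)},
\]
where $s'$ is a $q$-shifted element of $T_i$ (for $T_2,T_3$ simply $s'=s$) and the correction is a sum of monomials of strictly smaller ``$s$-degree'' in the PBW ordering placing $s$ last. Iterating this identity on the correction -- i.e.\ hitting it with another copy of $s$ on the left and reducing again -- absorbs it after finitely many steps, yielding $s^N\cdot x = \tilde a\cdot s$ for a bounded $N$. The power $N$ is controlled by the polynomial degree of the relation; the genuine case requiring $N=2$ is $T_2$ against $a^2_1$ (symmetrically $T_3$ against $a^1_2$) in the RE algebra, where $[a^2_1,a^1_2]$ in Definition~\ref{def:Oqmat} is a nontrivial quadratic in $a^1_1,a^2_2$ whose ``$a^2_2$-tail'' becomes an $a^1_2$-tail only after applying $a^1_2 a^2_2 = q^{-2} a^2_2 a^1_2$ twice. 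The index set $K=\tfrac12\mathbb{Z}$ in $T_4$ for $\SL_2$ is forced by the $\varepsilon=1$ version of the $\DqMat{\varepsilon}$ relations, which replaces $q^{\pm 2}$ by $q^{\pm 1}$ (for instance $\partial^2_2 a^2_2 = q^{-1} a^2_2 \partial^2_2$), so half-integer $q$-shifts in $s'$ are needed.

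The final claim for $\A_{g,r}$ is then automatic: in the $\GL_2$ case $\A_{g,r}=\Aplus_{g,r}[\detq^{-1}]$ is itself an Ore localisation and Ore sets descend to further Ore localisations; in the $\SL_2$ case $\A_{g,r} = \Aplus_{g,r}/(\detq-1)$ is a quotient by a central ideal in which each element of each $T_i$ is manifestly non-zero, so the Ore identities pass to the quotient unchanged. The main obstacle is purely bookkeeping: systematically checking the commutations of each $s\in T_i$ with every generator of every tensorant -- including the many cross-relations in \eqref{eq:braided-Oq-Oq} and \eqref{eq:braided-Oq-OqFRT} -- and confirming in each case that the inductive reduction terminates. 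Because the defining relations are polynomial of bounded total degree, this always succeeds with an explicit and small $N$.
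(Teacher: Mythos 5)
Your overall strategy coincides with the paper's: reduce to the Ore condition against a finite generating set via Lemma~\ref{lem:Ore-from-generators}, and verify it by explicit computation. Your treatment of $T_1$ is correct (the $q$-determinant is central in its own $\OqG$-factor, $q^{\pm 2}$-shifts against $D$ or $F$, and commutes with the braided tensorants), and your explanation for why $K=\tfrac12\mathbb{Z}$ is needed for $\SL_2$ (the $\varepsilon=1$ relations replace $q^{\pm2}$-shifts of $a^2_2$ by $q^{\pm 1}$-shifts) matches the paper.

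However, two parts of your argument fall short. First, you identify the ``genuine case requiring $N=2$'' as $T_2$ against $a^2_1$ (symmetrically $T_3$ against $a^1_2$), but this misses the actual worst cases. The hardest Ore identities come from the braided cross-relations \eqref{eq:braided-Oq-Oq} against generators $\tilde b^i_j$ of \emph{other} tensorants: the paper's Lemmata~\ref{lem:s2}--\ref{lem:s4} record $(a^1_2)^3\,\tilde b^2_1 = y\, a^1_2$, $(a^2_1)^3\,\partial^1_2 = y\,a^2_1$, and $t_{k-2}t_{k-1}t_k\,\tilde b^2_1 = y\,t_k$, so the required power is three, not two. Beyond correctness of the filtration bounds used downstream for holonomicity, this shows your degree bookkeeping has not tracked the right quantity. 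Second, the termination claim ``because the defining relations are polynomial of bounded total degree, this always succeeds with an explicit and small $N$'' is not a proof: bounded degree does not by itself force the iterated correction terms to reassemble with a power of $s$ on the right. What makes the induction close is a specific structural feature of these $q$-relations (the leading term of each relation is a $q$-scalar times the reversed monomial, and the corrections have strictly lower ``order'' $o(x)$ in the sense the paper makes precise), and this has to be verified for every generator, cross-tensorant ones included. The passage to $\A_{g,r}$ is fine modulo one caveat: for $\SL_2$, the element $\detq(A)-1\in T_1$ becomes zero in the quotient, so ``manifestly non-zero'' is not quite right — $S_1$ is still trivially Ore, but the localisation vanishes, as the paper notes.
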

Hence, for an $\A_{g,r}$-module $M$ we have localisations 
\[\phi_i: M \rightarrow S^{-1}_i M\]
and $M$ is (set-theoretically) $q$-\textit{supported} on $Z_i$ if it is $S_i$-torsion, while it is $q$-\textit{supported} on $U_i$ if it is $S_i$-torsion-free. 

Furthermore, we will show that such Ore localisations preserve holonomicity in the following sense. If $M$ is a holonomic over $\A_{g,r}$, then $S^{-1}_2 M $ and $S^{-1}_3 M$ are holonomic $\A_{g,r}$-modules while $S^{-1}_1 M$ and $S^{-1}_4 M$ are filtered by holonomic submodules. Note in passing that the Ore set $S_1$ is mostly relevant for $G=\GL_2$ since the $S_1$-localisation vanishes for $\SL_2$. 

The filtered submodules of $S^{-1}_i M$ for $ i=1,4$ are defined as follows. For $n \in \mathbb{Z}_{\geq 0}$ and $i=1,4$ define the subsets 
\begin{equation*}
    S_{i}^{(n)} := \left\{\prod_{j\in \mathbb{Z}}t_k^{\gamma_j} \mid \gamma_{k}\leq n, t_k \in T_i\right\}\subset S_i
\end{equation*}
for the generators $t_j\in T_i$ of $S_i$. For $\varepsilon=1$ one also includes half-integer generators $\{t_k\}_{k\in \frac{1}{2}\mathbb{Z}}$. These form an $\mathbb{N}$-filtration of $S_i$ as a monoid. Define the associated submodules
\begin{equation*}
    S_i^{(n)}M:= \left\{ m \in S^{-1}_i M \mid \exists s\in S_i^{(n)}: s.m \in \phi_i(M)\right\}\subset S^{-1}_i M~. 
\end{equation*}

We split the proof of Proposition~\ref{prop:Ore-sets} in four lemmata and we describe the $\Aplus_{g,r}$-action on $S_{i}^{-1} M$. Holonomicity preservation will be proven by showing $\GKdim(S^{-1}_iM) \leq \GKdim(M)$. 

\begin{lemma}\label{lem:s1}
    The set $S_1$ forms an Ore set in $\Aplus_{g,r}$ and $S^{(n)}_{1}(-)$ for $n\geq1$ preserves holonomicity. 
\end{lemma}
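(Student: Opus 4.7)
The plan is to treat the Ore condition and the holonomicity preservation separately.

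For the Ore condition, I will invoke Lemma~\ref{lem:Ore-from-generators} to reduce to checking each generator $t_k = \detq(A) - q^{2k}\in T_1$ against each algebra generator $x$ of $\Aplus_{g,r}$. The key fact to establish is that $\detq(A)$ quasi-commutes with every algebra generator: it is central in the $\OqG$ factor containing it (so it commutes with the $a^i_j$), it $q^{\pm 2}$-commutes with the $\partial^i_j$ in its own $\DqG$-copy by \eqref{eq:detq-commutation} and with the $f^i_j$ in a $\DqFRT$-copy by \eqref{eq:detqF-commutation}, and for generators coming from other tensorands the braided tensor product relations \eqref{eq:braided-Oq-Oq}--\eqref{eq:braided-Oq-OqFRT} reduce to scalar multiplication by a power of $q$ once applied to $\detq(A)$, since $\detq$ is the one-dimensional determinant representation of $U_q\mathfrak{g}$ and its braiding with any object is scalar. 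In every case one obtains an integer $c(x)$ with $x\,\detq(A) = q^{2c(x)}\,\detq(A)\,x$, which yields
\[
t_{k-c(x)}\,x \;=\; q^{-2c(x)}\, x\, t_k,
\]
the Ore identity with shifted denominator $t_{k-c(x)}\in T_1$.

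For the holonomicity preservation, I will fix a holonomic $\A_{g,r}$-module $M$ with finite generating set $\{m_l\}$ and show both finite generation and a Gelfand--Kirillov bound for $S^{(n)}_1 M$. The first observation is that a partial fraction expansion in the commutative polynomial ring $\field[\detq(A)]$ rewrites any $\prod_{k\in F}t_k^{-\gamma_k}$ with $\gamma_k \leq n$ as a $\field$-linear combination of simple inverses $t_k^{-j}$ with $j \leq n$. Combined with the iterated version of the Ore identity, $x\cdot t_k^{-j} = q^{-2jc(x)}\, t_{k-c(x)}^{-j}\cdot x$, this shows that $S_1^{(n)}M$ is generated over $\Aplus_{g,r}$ by the finite set $\{t_0^{-j}\phi_1(m_l)\}_{0\le j\le n,\,l}$, since acting by the $\partial^i_j$ (or the corresponding mixed tensorand generators) shifts the denominator index over all of $\mathbb{Z}$.

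To bound the GK dimension I will pass to an $\Aplus_{g,r}$-lattice $M' \subset M$ over $\Chbar$ and form the natural lattice $S_1^{(n)}M'\subset S_1^{(n)}M$ generated by the above finite list. At $\hbar=0$ all of the $t_k$ collapse to a single function $\detq(A) - 1$, so the partial-fraction reduction embeds the classical limit $S_1^{(n)}M'/\hbar$ into the classical localisation $(M'/\hbar)[(\detq(A)-1)^{-n}]$. Classical localisation of a finitely generated $\O(X)$-module at powers of a single non-zero-divisor preserves the dimension of the support, assuming $M$ is $S_1$-torsion-free (otherwise $S_1^{(n)}M=0$ and the statement is vacuous). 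Proposition~\ref{prop:DQ-GKdims} then yields $\GKdim(S_1^{(n)}M) \le \GKdim(M)$, giving the required holonomicity in view of Proposition~\ref{prop:Agr-holonomic}.

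The hard part will be making the lattice-level argument rigorous. The quantum elements $t_k$ are genuinely distinct and satisfy nontrivial commutation relations — their product grows with $|F|$ before partial-fraction reduction — while classically they all degenerate to a single function $\detq(A)-1$. Verifying that the candidate lattice $S_1^{(n)}M'$ is $\hbar$-torsion-free, that the partial-fraction cancellations are compatible with the $\Chbar$-integral structure, and that the classical embedding into a finite-order localisation does not leak extra support dimension, is the technical crux of the argument.
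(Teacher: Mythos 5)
Your Ore-set argument is essentially the paper's: quasi-commutation of $\detq(A)$ with every PBW generator (via \eqref{eq:detq-commutation}, \eqref{eq:detqF-commutation} and the braided relations for the other tensorands) together with Lemma~\ref{lem:Ore-from-generators} is exactly how the paper proceeds, so that half is fine. The holonomicity half, however, has two genuine gaps. First, your finite-generation claim is false as stated: acting by $\partial^i_j$ on $t_0^{-j}\phi_1(m_l)$ does shift the denominator index, but it simultaneously applies $\partial^i_j$ to the numerator, so you never produce $t_k^{-j}\otimes m$ for arbitrary $m\in M$. Concretely, already for $M=\Aplus_{g,r}$ the element $t_1^{-1}$ lies in $S_1^{(1)}M$ but not in $\Aplus_{g,r}\cdot\{t_0^{-j}\}$: if $t_1^{-1}=\sum_i a_i t_0^{-j_i}$, multiply on the right by $t_0^N$ with $N=\max_i j_i$; since $t_0=t_1+(q^2-1)$ and the $t_k$ commute with each other, the left side equals $(q^2-1)^N t_1^{-1}$ plus an element of $\Aplus_{g,r}$, while the right side lies in $\Aplus_{g,r}$, forcing $t_1$ to be a unit in the domain $\Aplus_{g,r}$, which it is not.

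Second, the dimension bound is exactly the step you defer (``the technical crux''), and the route you choose is genuinely obstructed: the partial-fraction coefficients are of the form $1/(q^{2k}-q^{2k'})$, which have poles at $q=1$ (equivalently, are not $\hbar$-integral after $q=e^{\hbar}$), so the reduction to simple denominators is incompatible with any $\Chbar$- or $\Cq$-lattice; moreover the classical limit of $(\prod_{k=-l}^{l}t_k)^{-1}\otimes m$ requires $(\detq(A)-1)^{-(2l+1)}$, i.e.\ unbounded order, so the claimed embedding into the order-$n$ classical localisation fails. The paper avoids this degeneration entirely: at generic $q$ holonomicity is characterised by a GK-dimension bound (Proposition~\ref{prop:Agr-holonomic}), and one proves $\GKdim(S_1^{(1)}M)\le\GKdim(M)$ directly by exhibiting the exhaustive filtration $F_l S_1^{(1)}M:=\bigl(\prod_{k=-l}^{l}t_k\bigr)^{-1}\otimes F_{3l}M$ attached to a good filtration $F_\bullet M$; compatibility with the standard filtration of $\Aplus_{g,r}$ holds because each generator shifts the determinant index by at most one, giving $\dim F_l S_1^{(1)}M\le\dim F_{3l}M$, and then $S_1^{(n)}=(S_1^{(1)})^{\circ n}$. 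Some device of this telescoping-product type (working with growing products of the $t_k$ rather than partial fractions) is what your write-up is missing, and without it the lemma is not proved.
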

\begin{proof}
	The $\SL_2$-case is redundant here and thus we only consider $G=\GL_2$. 
    Recall that the $q$-determinant $\det_q(A)$ commutes with all $\tilde{b}^i_j,\tilde{f}^{i}_j$-generators and $q^{2}$-commutes with $\partial^i_j$-generators in the non-separating case \eqref{eq:detq-commutation} resp.\ $f^i_j$ in the separating case \eqref{eq:detqF-commutation}.
    Hence, the only non-trivial Ore condition for the generators $d_k = \detq{A} -q^{2k}$ of $S_1$ is realised by
    \begin{equation*}
        d_{k+1}\partial^i_j = q^2~\partial^{i}_j d_k\quad \text{resp.} \quad d_{k-1}f^i_j = q^{-2}~f^{i}_j d_k~.
    \end{equation*}
    The Ore conditions for an arbitrary element $s\in S_1$ with respect to the generators follow easily from the above equation and thus by Lemma~\ref{lem:Ore-from-generators} $S_1$ is an Ore set.
    
    Now, let $M$ be an $\Aplus_{g,r}$-module with a good filtration $F_l M$ with respect to the standard filtration of $\Aplus_{g,r}$. Define the following filtration of $S^{(1)}_{1}M$ by 
    \begin{equation*}
        F_{l} S^{(1)}_1 M := \left(\prod_{k=-l}^l{d_k}\right)^{-1}\otimes F_{3l} M~.
    \end{equation*}
    For instance, to verify that it is a well-defined filtration note (in the non-separating case) that
    \begin{align*}
        \partial^{i}_j. \left(\left(\prod_{k=-l}^l{d_k}\right)^{-1}\otimes F_{3l} M\right) &= \left(\prod_{k=-l}^l{d_{k+1}}\right)^{-1}\otimes q^{2(2l+1)} \partial^i_j F_{3l} M \\ 
        &= \left(\prod_{k=-l-1}^{l+1}{d_k}\right)^{-1}\otimes q^{2(2l+1)} d_{-l-1}\, d_l \,\partial^i_j F_{3l} M\\
        &\subset \left(\prod_{k=-l-1}^{l+1}{d_k}\right)^{-1}\otimes F_{3l+3} M =: F_{l+1}S_1^{(1)}M
    \end{align*}
	and similarly for the separating case. In particular, \[\dim(F_l S_1^{(1)}M) \leq \dim(F_{3l}M)\]
    which implies $\GKdim(S^{(1)}_1 M) \leq \GKdim(M)$. By Proposition~\ref{prop:Agr-holonomic} and the above argument $S^{(1)}_1$ preserves holonomicity. The same is true for $S^{(n)}_1$ as $S^{(n)}_1 = S_1^{(1)}\circ \cdots \circ S_{1}^{(1)}$. 
\end{proof}

\begin{lemma}\label{lem:s2}
	The set $S_2$ generated by the element $a_2^1$ forms an Ore set and $S_2^{-1}$ preserves holonomicity. 
\end{lemma}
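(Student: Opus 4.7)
The plan is to verify the Ore condition using Lemma~\ref{lem:Ore-from-generators}, by showing that for every generator $x$ of $\Aplus_{g,r}$ there exist $n\geq 1$ and $y\in\Aplus_{g,r}$ with $(a^1_2)^n\,x = y\,a^1_2$. The key structural observation is that every commutator of $a^1_2$ with a generator of $\Aplus_{g,r}$ produces correction terms built from $a^1_1$, $a^2_2$, and generators of other tensorands; since $a^1_2$ and $a^2_2$ already $q$-commute cleanly as $a^1_2\,a^2_2 = q^{-2}\,a^2_2\,a^1_2$ (from Definition~\ref{def:Oqmat}), applying at most two further multiplications by $a^1_2$ on the left forces every correction term to terminate in a factor of $a^1_2$.

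Concretely, I would carry this out case by case. For generators in the same $\OqMat$-factor as $a^1_2$: Definition~\ref{def:Oqmat} gives $[a^1_2, a^1_1] = -\numberq{-2}{q}\, a^1_2 a^2_2$ (already ending in $a^1_2$), the clean $q^{-2}$-commutation with $a^2_2$, and $[a^1_2, a^2_1] = \numberq{-2}{q}(a^1_1 a^2_2 - (a^2_2)^2)$; for the last, multiplying by an additional $a^1_2$ on the left and $q$-commuting $a^1_2$ past $a^2_2$ lands the correction in $\Aplus_{g,r}\cdot a^1_2$ (so $n=2$ suffices). For $\partial^i_j$ in the same $\DqMat{\varepsilon}$-factor (non-separating case), the explicit relations produce corrections involving only $\partial^i_2$ and $a^2_2$, handled identically. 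For $f^i_j$ in the same $\DqMatb{\varepsilon}$-factor (separating case), the relations give corrections in $\langle f^i_j, a^2_2\rangle$. Finally, for generators $\tilde b^i_j$ and $\tilde f^i_j$ of other tensorands, the braided relations \eqref{eq:braided-Oq-Oq} and \eqref{eq:braided-Oq-OqFRT} again yield corrections of the form $P(a^1_1,a^2_2)\cdot(\text{generators})$, susceptible to the same iteration.

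For preservation of holonomicity, fix a good $\Aplus_{g,r}$-filtration $F_l M$ with $\dim F_l M \leq c\, l^{d}$ for $d=\GKdim(M)$, and define
\[
F_l(S_2^{-1}M) := \sum_{i+j \leq l}(a^1_2)^{-i}\cdot F_{j} M ~.
\]
The Ore-condition estimates above imply that acting by a generator $x$ sends $(a^1_2)^{-i}\cdot m$ to a finite sum of terms $(a^1_2)^{-i'}\cdot m'$ with $i'\leq i+C_x$ and $m' \in F_{j+C_x}M$, where $C_x$ is a constant depending only on $x$ (not on $i,j$). Taking $C := \max_x C_x$ over a finite generating set of $\Aplus_{g,r}$ and rescaling the filtration linearly in $l$, one obtains $\dim F_l(S_2^{-1}M) \leq c'\, l^d$, so $\GKdim(S_2^{-1}M) \leq d$. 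Combined with Proposition~\ref{prop:Agr-holonomic}, this concludes holonomicity preservation in both the $\GL_2$ case (via the induced $\Aplus_{g,r}$-lattice structure inside $S_2^{-1}M$, which continues to be scheme-theoretically supported at $\detq-q^{2k}$) and the $\SL_2$ case (where the supporting condition $\detq=1$ is preserved because $a^1_2$ commutes with $\detq$).

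The main obstacle is the bookkeeping in the third paragraph: one must bound uniformly, independently of $i$ and $j$, how many extra factors of $(a^1_2)^{-1}$ appear when commuting a generator such as $a^2_1$ or $\tilde b^2_1$ past $(a^1_2)^{-i}$. Each single swap introduces at most one extra $(a^1_2)^{-1}$, but showing that this bound persists along the full iteration (and that the associated corrections remain in the appropriate filtered pieces) is the technical heart of the lemma.
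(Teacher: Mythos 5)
Your proposal takes essentially the same route as the paper's proof: verifying the Ore condition generator-by-generator via explicit relations of the form $(a^1_2)^{n}x = y\,a^1_2$ together with Lemma~\ref{lem:Ore-from-generators}, and then bounding $\GKdim(S_2^{-1}M)\leq\GKdim(M)$ by a linearly rescaled filtration before invoking Proposition~\ref{prop:Agr-holonomic}. The uniformity you single out as the technical heart is exactly what the paper supplies: it lists the explicit Ore relations (the worst case being $(a^1_2)^3\,\tilde b^2_1 = (\cdots)\,a^1_2$), notes that for each generator $x$ the exponent shift $o(x)\leq 2$ and the degree of the cofactor $y$ ($\leq o(x)+1\leq 3$) are independent of the power of $a^1_2$ by a one-line induction, and accordingly takes $F_l(S_2^{-1}M) = (a^1_2)^{-2l}\otimes F_{3l}M$.
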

\begin{proof}
	We list the Ore conditions with respect to the generators of $\Aplus_{g,r}$ that are not immediate from the relations 
	\begin{align*}
		& \left(a_2^1\right)^2 a_1^2 = \left(a_2^1 a_1^2 + q^{-2}\numberq{-2}{q}\left(a_1^1 - q^{-4}a_2^2\right) a_2^2\right) a_2^1\\ 
		& \left(a_2^1\right)^2 \partial_1^1 = q^{2-\varepsilon}\left(a_2^1 \partial_1^1 - q^{-2}\numberq{-2}{q}a_2^2\partial_2^1 -(\numberq{-2}{q})^2 a_2^1 \partial_2^2\right) a_2^1 \\
		& \left(a_2^1\right)^2 \partial_1^2 = q^{-\varepsilon}\left(a_2^1 \partial_1^2 - q^{-4}\numberq{-2}{q}a_2^2 \partial_2^2\right) a_2^1~,
	\end{align*}
	for the non-separating case. For the separating case, the last two relations are replaced by 
	\[
		\left(a^1_2\right)^2 f^2_j = q^{\varepsilon-1} \left( a^1_2 f^2_j + q^{-2} \numberq{-2}{q} a^2_2  f^1_j\right) a^1_2		
	\]
	The non-trivial Ore conditions  with respect to the $\tilde{b}^i_j$ generators are 
        \begin{align*}
	\left(a^1_2\right)^2\tilde{b}^1_1 &=\left(a^1_2  \tilde{b}^1_1 + q^{-2} \numberq{-2}{q} \left(a^1_1 - q^{-4} a^2_2\right) \tilde{b}^1_2\right) a^1_2\\ 
    a^1_2 \tilde{b}^1_2 &= q^{-2} \tilde{b}^1_2 a^1_2 \\ 
    \left(a^1_2\right)^3 \tilde{b}^2_1 &= q^2 \bigg(\left(a^1_2\right)^2  
       \tilde{b}^2_1 -\numberq{-2}{q} \left(a^1_1 + 
         \lambda(q) a^2_2\right)\left(a^1_2 \left(\tilde{b}^1_1 - 
             \tilde{b}^2_2\right) + \numberq{-4}{q} \left(a^1_1 - q^{-4} a^2_2\right) \tilde{b}^1_2\right)\\ \nonumber
             &\hspace{1cm}+ \mu(q) \left(a^1_2 a^2_1 + 
          q^{-2} \numberq{-2}{q} \left(a^1_1 - q^{-4} a^2_2\right)  a^2_2\right)  \tilde{b}^1_2 \bigg)  
    a^1_2 \\
\left(a^1_2\right)^2 \tilde{b}^2_2 &=(a^1_2  \tilde{b}^2_2 + q^{-2} \numberq{-2}{q} (a^1_1 - q^{-4} a^2_2) \tilde{b}^1_2)  
    a^1_2
	\end{align*}
    where $\lambda(q) =  q^{-2} \left(1 - q^{-2} - q^{-4} \right)$ and $\left(q^{-2} - q^{-6} + q^{-4}- 1\right)$. The only non-trivial Ore condition with respect to the $\tilde{f}^i_j$ generators is 
    \[
    \left(a^1_2\right)^2 \tilde{f}^2_j = q\left(a^1_2 \tilde{f}^2_j - \numberq{-2}{q}\left(a^1_1 - q^{-4}a^2_2\right) \tilde{f}^1_j\right)a^1_2~.
    \]
    
    Given a generator $x\in \Aplus_{g,r}$ let $o(x)\in\mathbb{Z}_{n\geq0}$ the minimal number such that $(a^1_2)^{o(x)}x = y a^1_2$ for some $y\in \Aplus_{g,r}$. For instance, $o(a^2_2) = 0$, $o(a^2_1) = 1$ and $o(\tilde{b}^2_1)=2$ as read from the previous equations. In fact, $o(x)=2$ is the maximum possible value for any generator $x$. 
    
    Using the relations, one can inductively show that for any $s=(a^1_2)^n$ and a generator $x$ of $\Aplus_{g,r}$ there exist $\tilde{s} = (a^1_2)^{n+o(x)}$ and $y\in \Aplus_{g,r}$ of order $o(x)+1$ satisfying the Ore condition $(a^1_2)^{n+o(x)}x = y (a^1_2)^n$. This follows inductively by multiplying $a^1_2$ on the left of the above relations. This is easily observed as follows: if $(a^1_2)^{1+o(x)}x = y a^1_2$ as above, then the terms of $y$ appearing all satisfy the Ore condition by multiplying a single $a^1_2$ on the left. 
	
    Thus, the action of a generator $x$ on $S^{-1}_2 M$ is given by 
	\begin{equation*}
		x . \left((a_2^1)^{-n}\otimes m\right) := \left(a_2^1\right)^{-n-o(x)}\otimes y.m
	\end{equation*} 
	and if $F_l M$ is a filtration of $M$ then 
	\begin{equation*}
		F_l (S^{-1}_2 M) = (a_2^1)^{-2l}\otimes F_{3l} M~.
	\end{equation*}
	is a filtration of $S_2^{-1}M$. This is a well-defined filtration since $o(x)\leq 2$ and $y$ of order $\leq3$. In particular, we get $\dim(F_lS^{-1}_2M)\leq \dim(F_{3l}M)$ and we repeat the argument as before. 
\end{proof} 
\begin{lemma}\label{lem:s3}
	The set $S_3$ generated by the element $a_1^2$ forms an Ore set and $S_3^{-1}$ preserves holonomicity.
\end{lemma}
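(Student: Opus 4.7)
The plan is to mirror the proof of Lemma~\ref{lem:s2} almost verbatim, swapping the roles of the off-diagonal generators $a^1_2$ and $a^2_1$. The underlying reason this works is that the defining relations of $\OqMat$, and more generally of $\DqMat{\varepsilon}$, $\DqMatb{\varepsilon}$, and the braided relations \eqref{eq:braided-Oq-Oq}, \eqref{eq:braided-Oq-OqFRT}, enjoy a symmetry exchanging upper and lower indices (with an appropriate rescaling of $q$), under which $a^1_2 \leftrightarrow a^2_1$. Consequently every Ore relation established for $a^1_2$ in the proof of Lemma~\ref{lem:s2} has a direct analogue with $a^2_1$ on the left and with $a^2_2$ replaced by $a^1_1$ in the correction terms (up to straightforward modifications of the scalars in $q$).

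First I would list the non-trivial Ore conditions of $a^2_1$ against the generators $a^i_j$, $\partial^i_j$ (non-separating case) or $f^i_j$ (separating case), and against the remaining braided factors $\tilde{b}^i_j$, $\tilde{f}^i_j$. Each of these takes the form
\[
(a^2_1)^{n} \cdot x \;=\; y \cdot a^2_1
\]
for a suitable $y \in \Aplus_{g,r}$ and a bounded exponent $n = o(x) \in \{0,1,2\}$, where the scalar $n$ is read off directly from the relevant commutation relation in Section~\ref{subsec:GL2-background}. By Lemma~\ref{lem:Ore-from-generators} it then suffices to verify the Ore condition on generators, hence $S_3$ is an Ore set.

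Next, for the action of a generator $x$ on $S_3^{-1}M$, I set
\[
x . \bigl((a^2_1)^{-n} \otimes m\bigr) \;:=\; (a^2_1)^{-n - o(x)} \otimes y . m,
\]
with $y$ the element produced above, whose order is bounded by $o(x)+1 \leq 3$. Given a good filtration $F_l M$ of $M$ with respect to the standard filtration on $\Aplus_{g,r}$, I then define
\[
F_l(S_3^{-1} M) \;:=\; (a^2_1)^{-2l} \otimes F_{3l} M.
\]
The bound $o(x) \leq 2$ and the fact that the corresponding $y$ has order at most $3$ guarantee that this is a well-defined exhaustive filtration of $S_3^{-1} M$, and it yields $\dim F_l(S_3^{-1}M) \leq \dim F_{3l} M$, whence $\GKdim(S_3^{-1} M) \leq \GKdim(M)$. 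Applying Proposition~\ref{prop:Agr-holonomic} concludes the preservation of holonomicity.

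The main obstacle, as in Lemma~\ref{lem:s2}, is simply the bookkeeping of all the non-trivial commutation relations produced when pushing $a^2_1$ past the generators coming from adjacent tensorands in the braided tensor product, in particular verifying that $o(\tilde{b}^1_2)$ (the analogue of $o(\tilde{b}^2_1)$ in the previous lemma) remains bounded by $2$. Once this combinatorial check is in place, the rest of the argument is formal and follows the proof of Lemma~\ref{lem:s2} line by line.
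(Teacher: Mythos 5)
Your proposal follows the paper's own route: verify the Ore condition only on generators via Lemma~\ref{lem:Ore-from-generators}, bound the order $o(x)\leq 2$ of each generator against $a^2_1$, and then use the filtration $F_l(S_3^{-1}M)=(a^2_1)^{-2l}\otimes F_{3l}M$ to get $\dim F_l(S_3^{-1}M)\leq \dim F_{3l}M$, hence $\GKdim(S_3^{-1}M)\leq\GKdim(M)$ and preservation of holonomicity; this is exactly the argument given in the paper. One caveat on your motivating heuristic: the upper/lower index symmetry you invoke is not literally present, because the braided relations \eqref{eq:braided-Oq-Oq} and \eqref{eq:braided-Oq-OqFRT} treat $a^1_2$ and $a^2_1$ asymmetrically (for instance $\tilde{b}^1_2$ commutes with all $b^i_j$ while $\tilde{b}^2_1$ does not), so the relations for $S_3$ cannot simply be transported from Lemma~\ref{lem:s2} by swapping indices and replacing $a^2_2$ by $a^1_1$. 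In fact the asymmetry works in your favour: $a^2_1$ merely $q$-commutes with every generator $\tilde{b}^i_j$ and $\tilde{f}^i_j$ of the other tensorands, so the check you single out as the main obstacle (bounding $o(\tilde{b}^1_2)$) is trivial, with $o=0$; the genuinely nontrivial Ore relations occur inside the same factor, against $a^1_2$, $\partial^1_1$, $\partial^1_2$ (a cubic relation, i.e.\ $o=2$), $\partial^2_2$ and $f^1_j$, and their correction terms involve $a^2_2$, not $a^1_1$, so they must be computed directly. Once that bookkeeping is done, your filtration and growth argument goes through verbatim as in Lemma~\ref{lem:s2}, matching the paper's proof.
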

\begin{proof}
We list the Ore conditions that are not immediate from the relations. The equations involving $\partial^i_j$ are for the non-separating case and those involving $f^i_j$ for the separating case. 
    \begin{align*}
\left(a^2_1\right)^2 a^1_2 &= \left(a^2_1 a^1_2 + \numberq{2}{q} a^2_2 \left(a^1_1 - \lambda(q) a^2_2\right)\right) a^2_1 \\ 
\left(a^2_1\right)^2 \partial^1_1 &= q^{-\varepsilon}\left(a^2_1 \partial^1_1 +  \numberq{2}{q} \left(a^1_1 - \lambda(q) a^2_2\right) \partial^2_1\right) a^2_1 \\ 
\left(a^2_1\right)^3 \partial^1_2 &= q^{-\varepsilon}\bigg(\left(a^2_1\right)^2 \partial^1_2  -\numberq{-2}{q} \left(a^1_1 + q^2 \numberq{-4}{q} a^2_2\right) \left(q^2 a^2_1 \partial^2_2 + q^4 \numberq{2}{q} a^2_2 \partial^2_1 \right)\\ 
&\hspace{1cm}- \numberq{2}{q}^2 \left(a^2_1 a^1_2 + \numberq{2}{q} a^2_2 \left( a^1_1 - \lambda(q) a^2_2\right)\right) \partial^2_1 \nonumber \\ 
&\hspace{1cm}+ q^2 \numberq{2}{q} a^2_2 \left(a^2_1 \partial^1_1 + \numberq{2}{q} \left(a^1_1 - \mu(q) a^2_2\right)  \partial^2_1 - q^2 a^2_1 \partial^2_2 \right) \bigg) a^2_1 \nonumber \\
\left(a^2_1\right)^2 \partial^2_2 &= q^{-\varepsilon}\left(q^2 a^2_1 \partial^2_2 + q^4 \numberq{2}{q} a^2_2 \partial^2_1 \right) a^2_1  \\
(a^2_1)^2 f^1_j &= q^{-1+\varepsilon} (a^2_1 f^1_j -\numberq{2}{q} a^2_2 f^2_j) a^2_1
\end{align*}
where $\lambda(q)=\left(1 -q^{-2} + q^2\right)$ and $\mu(q)=\left(1 - q^{-2} + q^2 + q^4\right)$. Note that the Ore conditions with respect to generators $\tilde{b}^i_j$ and $\tilde{f}^i_j$ of other tensorants are immediate from the relations. 
In particular, as in the previous proof, for each generator $x$ there exists $y\in (\Aplus_{g,r})_{\leq 3}$ such that
	\begin{equation*}
		x. \left((a^2_1)^{-n}\otimes m\right) := \left(a^2_1\right)^{-n-2}\otimes y.m~.
	\end{equation*} 
	Therefore, we define a filtration 
	\begin{equation*}
		F_l (S^{-1}_3 M) = (a^2_1)^{-2l}\otimes F_{3l} M~.
	\end{equation*}
\end{proof}
Finally, we treat the $S_4$-case: 
\begin{lemma}\label{lem:s4}
The set $S_4$ monomial generated by 
\begin{equation*}
	\{t_j:= a_2^2 - q^{2k}\}_{k\in K}
\end{equation*}
forms an Ore set in $\Aplus_{g,r}$ and $S^{(1)}_i(-)$ preserves holonomicity. The index set $K$ is $\mathbb{Z}$ for $\GL_2$ and $\SL_2$.
\end{lemma}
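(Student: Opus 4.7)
The plan is to follow the template established in Lemmas~\ref{lem:s1}–\ref{lem:s3}: record the commutation relations of $a^2_2$ with the generators of $\Aplus_{g,r}$, verify the Ore condition generator-by-generator, appeal to Lemma~\ref{lem:Ore-from-generators} to promote it to all of $\Aplus_{g,r}$, and finally bound the GK-dimension of $S^{(1)}_{4} M$ via a filtration argument.

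First I would record the key identities. From the $\OqMat$ relations one gets the clean $q$-commutations $a^2_2 a^1_1 = a^1_1 a^2_2$, $a^2_2 a^1_2 = q^2 a^1_2 a^2_2$ and $a^2_2 a^2_1 = q^{-2} a^2_1 a^2_2$. The cross-relations with the $\partial^i_j$ (resp.\ $f^i_j$) generators then yield, after simplification (crucially using $\numberq{2}{q}\numberq{-2}{q} = -q^{-2}\numberq{2}{q}^2$, which collapses two of the $\partial^2_2 a^2_2$-contributions), identities of the shape
\begin{equation*}
a^2_2 \cdot x = q^{n(x)}\, x \cdot a^2_2 + r(x),
\end{equation*}
where $r(x)$ is a sum of monomials whose factors already $q$-commute cleanly with $a^2_2$; for instance $a^2_2 \partial^1_1 = q^{-\varepsilon} \partial^1_1 a^2_2 + \numberq{2}{q}\, a^1_2 \partial^2_1$. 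Analogous identities hold in the separating case and for the braided cross-tensorant generators $\tilde{b}^i_j$ and $\tilde{f}^i_j$.

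Second, I would verify the Ore condition. A single factor $(a^2_2 - q^{2k})$ does not suffice whenever $r(x)\neq 0$, so the plan is to multiply on the left by an additional factor $(a^2_2 - q^{2k'})$ and use the elementary recombination
\begin{equation*}
(a^2_2 - q^{2\alpha}) + q^2(a^2_2 - q^{2\beta}) = (1+q^2)\bigl(a^2_2 - q^{2m}\bigr),\qquad q^{2m} := \tfrac{q^{2\alpha} + q^{2\beta+2}}{1+q^2},
\end{equation*}
to fuse the two residual single-factor terms into a clean element of $S_4$. The resulting shift parameter $m$ lies in $\mathbb{Z}$ when $\varepsilon = 0$ and in $\tfrac{1}{2}\mathbb{Z}$ when $\varepsilon = 1$, consistent with Proposition~\ref{prop:Ore-sets}. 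Iterating yields an Ore order $o(x)\leq 2$ and a replacement of the form $(a^2_2 - q^{2k_1})(a^2_2 - q^{2k_2})\, x = y\, (a^2_2 - q^{2l_1})(a^2_2 - q^{2l_2})$ for every generator; Lemma~\ref{lem:Ore-from-generators} then extends the Ore condition to $\Aplus_{g,r}$. For the holonomicity claim, I would fix a good $\Aplus_{g,r}$-filtration $F_l M$ and set
\begin{equation*}
F_l(S^{(1)}_4 M) := \Bigl(\prod_{k\in K,\,|k|\leq l}(a^2_2 - q^{2k})\Bigr)^{-1}\otimes F_{cl} M
\end{equation*}
for a constant $c$ bounded by twice the maximal Ore order among the generators, and use the commutations of the first step to verify that each generator sends $F_l$ into $F_{l+1}$. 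This gives $\dim F_l(S^{(1)}_4 M)\leq \dim F_{cl} M$, hence $\GKdim(S^{(1)}_4 M)\leq \GKdim(M)$, and holonomicity is preserved by Proposition~\ref{prop:Agr-holonomic}.

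The principal obstacle is the combinatorial bookkeeping in the second step: unlike for $S_2$ and $S_3$, where a single left Ore factor sufficed because $a^1_2$ and $a^2_1$ occupy extremal positions in the reflection equation, the diagonal element $a^2_2$ produces residuals which only fuse cleanly after the weighted-average identity above. Checking that the resulting shift parameters remain in the prescribed $K$, and that the Ore identities close up after finitely many factors for every generator (including all braided cross-tensorant generators), is the delicate point of the argument.
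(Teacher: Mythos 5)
Your overall plan -- list the commutation relations of $a^2_2$ with each generator, verify the Ore condition by multiplying by additional $t$-factors, invoke Lemma~\ref{lem:Ore-from-generators}, and bound $\GKdim(S^{(1)}_4 M)$ via a filtration -- is the same template the paper follows, and your Step~1 (including the observation that the $\partial^2_2 a^2_2$-contributions cancel after using $\numberq{2}{q}\numberq{-2}{q}=-q^{-2}\numberq{2}{q}^2$) is correct.

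The gap is in your Step~2. The ``elementary recombination''
\[
(a^2_2 - q^{2\alpha}) + q^2(a^2_2 - q^{2\beta}) = (1+q^2)\bigl(a^2_2 - q^{2m}\bigr),\qquad q^{2m} := \tfrac{q^{2\alpha}+q^{2\beta+2}}{1+q^2},
\]
does not produce an element of $S_4$: for generic $q$ the quantity $\tfrac{q^{2\alpha}+q^{2\beta+2}}{1+q^2}$ is a power $q^{2m}$ with $m\in K$ only in the degenerate cases $\alpha=\beta$ or $\alpha=\beta+2$, and you have not shown these always occur. More importantly, the correct computation never needs to fuse two $t$-factors. After you multiply the single-factor relation $q^\varepsilon t_{k-\varepsilon/2}\partial^1_1 = \partial^1_1 t_k + q^\varepsilon\numberq{2}{q}a^1_2\partial^2_1$ on the left by $q^\varepsilon t_{k+1-\varepsilon/2}$, both residual contributions come out carrying the \emph{same} monomial $a^1_2\partial^2_1$ followed by the \emph{same} single factor $t_k$ on the right; it is only the scalar coefficients that combine, via $\numberq{2}{q}+q^2\numberq{2}{q}=\numberq{4}{q}$, giving the clean Ore identity $q^{2\varepsilon}t_{k-\varepsilon/2}t_{k+1-\varepsilon/2}\partial^1_1 = (\partial^1_1 t_{k+1}+q^\varepsilon\numberq{4}{q}a^1_2\partial^2_1)t_k$. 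So the mechanism is scalar recombination of $\numberq{n}{q}$'s, not a weighted average of shift parameters. Finally, your bound $o(x)\le 2$ is too optimistic: the braided cross-tensorant generator $\tilde{b}^2_1$ requires three $t$-factors ($t_{k-2}t_{k-1}t_k$), which is why the filtration window in the paper must grow by $2$ on each side and pairs with the order bound $6$ (giving $F_{6l}M$), not with ``twice the maximal Ore order''.
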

\begin{proof}
	We first demonstrate the Ore conditions with respect to the $\partial^i_j$'s resp.\ $f^i_j$'s. From the algebra relations we have  
		\begin{align*}
		&t_k a_1^1  = a_1^1 t_k \\
		&t_k a^1_2  = q^2 a^1_2 t_{k-1} \\
		&t_k a^2_1  = q^{-2} a^2_1 t_{k+1} \\
		&q^{\varepsilon}t_{k-\frac{\varepsilon}{2}} \partial_1^2  = \partial_1^2 t_{k}\\ 
		&q^{\varepsilon}t_{k-\frac{\varepsilon}{2}} \partial_2^2  = q^{2} \partial_2^2 t_{k-1}\\
		&q^\varepsilon t_{k-\frac{\varepsilon}{2}} \partial_1^1 = \partial_1^1 t_{k} +q^\varepsilon \numberq{2}{q}a_2^1 \partial_1^2 \\ 
		&q^\varepsilon t_{k-\frac{\varepsilon}{2}} \partial_2^1 =\partial_2^1 t_{k-1} -q^{2+\varepsilon}\numberq{2}{q}a_2^1 \partial_2^2\\ 
        &q^{-\varepsilon}t_{k+\frac{\varepsilon}{2}} f^1_l = f^1_l t_{k} \\ 
        &q^{-\varepsilon}t_{k+\frac{\varepsilon}{2}} f^2_l =q^{-2}f^2_l t_{k+1}  
	\end{align*} 
	The Ore conditions are immediate for all generators but $\partial_1^1$ and $\partial_2^1$. These are realised by the following equations: 
	\begin{align*}
		&q^{2\varepsilon}\left(t_{k-\frac{\varepsilon}{2}} t_{k+1-\frac{\varepsilon}{2}}\right)\partial_1^1 = \left(\partial_1^1 t_{k+1} + q^{\varepsilon} \numberq{4}{q}a_2^1 \partial_1^2\right)t_{k}\\ 
		&q^{\varepsilon}\left(t_{k+1-\frac{\varepsilon}{2}} t_{k+2-\frac{\varepsilon}{2}}\right)\partial_2^1 = q^2\left(q^2\partial_2^1 t_{k+1} + q^\varepsilon\numberq{4}{q}a_2^1 \partial_2^2\right)t_k
	\end{align*}
From the above mentioned relations it is immediate that the Ore condition is satisfied for any $s\in S_4$ with respect to the $a^i_j$'s, $\partial^i_j$'s resp.\ $f^i_j$'s. 
However, we write the following relation which will be used later to define an appropriate filtration:
	\begin{align*}
		&\left(t_{k-\frac{\varepsilon}{2}} \cdots t_{k+N+1-\frac{\varepsilon}{2}}\right) \partial_1^1 = \left(\partial_1^1 t_{j+N+1} + q^\varepsilon\numberq{2(N+1)}{q}a_2^1 \partial_1^2\right) \left(t_k \cdots t_{k+N}\right)\\
		&\left(t_{k+1-\frac{\varepsilon}{2}} \cdots t_{k+N+2-\frac{\varepsilon}{2}}\right) \partial_2^1 = q^{2(N+1)}\left(\partial_2^1 t_{k+N} - q^\varepsilon\numberq{-2(N+1)}{q}a_2^1 \partial_2^2\right) \left(t_k \cdots t_{k+N}\right)
	\end{align*}
The Ore conditions with respect to the generators of the other tensorants are realised by:
\begin{align*}
t_{k-1} t_k \tilde{b}^1_1 &= \left(t_{k-1} \tilde{b}^1_1 + q^{-2} \numberq{-2}{q} a^2_1 \tilde{b}^1_2 \right) t_k  \\ 
t_k \tilde{b}^1_2 &= \tilde{b}^1_2 t_k \\ 
t_{k-2} t_{k-1} t_k \tilde{b}^2_1 &=
\left(t_{k-2} t_{k-1} \tilde{b}^2_1 - q^{-2}\numberq{-2}{q} a^2_1 t_{k-1} \left(\tilde{b}^1_1 - \tilde{b}^2_2\right) + q^{-4} \lambda(q) \left(a^2_1\right)^2 \tilde{b}^1_2\right) t_k \\ 
t_{k-1} t_k \tilde{b}^2_2  &=\left(t_{k-1} \tilde{b}^2_2 -\numberq{-2}{q} a^2_1 \tilde{b}^1_2\right) t_k\\ 
t_k \tilde{f}^1_j &= \tilde{f}^1_j t_k\\ 
t_{k-1}t_k \tilde{f}^2_j &= \left(t_{k-1} \tilde{f}^2_j - \numberq{-2}{q}a^2_1 \tilde{f}^1_j\right) t_k
\end{align*}
where $\lambda(q) = \left(1 - q^{-4} + q^{-2} - q^2 \right)$.
In particular, for $\varepsilon=0$ for any PBW generator $x$ there exists $y$ of order at most $6$ such that: 
\begin{equation*}
	x .\left(\left(t_k \cdots t_{k+N}\right)^{-1} \otimes m\right) = \left(t_{k-2}\cdots t_{k+N+2}\right)^{-1}\otimes y. m
\end{equation*}
For $\varepsilon=1$, one has the same expression by including half-integer steps in the product, namely
\[x.\left(\left(\prod_{j=0}^{N}t_{k+\frac{j}{2}}\right)^{-1}\otimes m\right) = \left(\prod_{j=-4}^{N+4}t_{k+\frac{j}{2}}\right)^{-1}\otimes y.m\]
In particular, the submodule $S_4^{(1)}$ preserves holonomicity, since for a given filtered module $(M,F)$ we find the following filtration of $S^{(1)}_4 M$ and repeat the growth arguments 
\begin{equation*}
	F_l S_4^{(1)} M = \left(\prod_{k=-2l}^{2l}{t_k}\right)^{-1}\otimes F_{6l}M~.
\end{equation*}
\end{proof}

\begin{lemma}\label{lem:Inv-Im-Loc}
	Let $M$ be an $\Aplus_{g,r}$-module. Then, 
	\begin{equation*}
		Li^\ast_+(S^{-1}_i M) \simeq 0
	\end{equation*} 
	resp.\
	\[L\delta_+(S^{-1}_i M) \simeq 0\]
	for any of the Ore sets $S_i$. 
\end{lemma}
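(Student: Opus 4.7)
The plan is to interpret the derived inverse images as $\operatorname{Tor}$-groups over $\OqG$ and then invoke the standard null-homotopy argument for Koszul complexes: any element of the augmentation ideal of $\OqG$ that acts invertibly on the coefficient module forces these $\operatorname{Tor}$-groups to vanish.

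First, for each Ore set $S_i$ one singles out a generator $x_i$ lying in the augmentation ideal $(A-I)\subset \OqG$ of the collapsed cycle, \ie annihilated by the counit $\epsilon:\OqG\to \field$, $L\mapsto I$. Explicitly take $x_1=\detq(A)-1$ (the $k=0$ member of $T_1$), $x_2=a^1_2$, $x_3=a^2_1$, and $x_4=a^2_2-1$ (the $k=0$ member of $T_4$). By construction $x_i$ is invertible in $S_i^{-1}\Aplus_{g,r}$, so $x_i$ acts invertibly on $S_i^{-1}M$ for every $\Aplus_{g,r}$-module $M$.

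Next, by the flatness statement of Proposition~\ref{prop:flat-ann-embd}, the Koszul resolution $K^\bullet\to \field$ of Section~\ref{subsec:GL2-Koszul} tensors up to a free right-module resolution of the transfer bimodules \eqref{eq:ns-transfer} and \eqref{eq:s-transfer}. Hence for any left $\Aplus_{g,r}$-module $N$,
\[ Li_+^{*}(N)\;\simeq\; \field \otimes^{\mathbb{L}}_{\OqG}\! N \;\simeq\; K^\bullet \otimes_{\OqG} N, \]
where $N$ is restricted to $\OqG$ along the embedding $\alpha$, and the analogous formula with the embedding $\gamma$ computes $L\delta_+(N)$.

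For any $x \in (A-I)$, right multiplication $\mu_x$ on $K^\bullet$ is a chain endomorphism of a projective resolution of $\field=\OqG/(A-I)$ lifting the zero endomorphism of $\field$, hence $\mu_x$ is null-homotopic. Applying $-\otimes_{\OqG} N$ yields a null-homotopic endomorphism $\phi_x$ of $K^\bullet \otimes_{\OqG} N$ whose action on the degree-$p$ free summand $N^{\oplus r_p}$ is diagonal left multiplication by $x$ on each copy of $N$. Specialising to $x=x_i$ and $N = S_i^{-1}M$, the endomorphism $\phi_{x_i}$ is simultaneously null-homotopic and a cochain-level isomorphism; only an acyclic complex admits such an endomorphism, yielding $Li_+^{*}(S_i^{-1}M)\simeq 0$. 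The identical argument on the separating side gives $L\delta_+(S_i^{-1}M)\simeq 0$. The main point to verify carefully is the flatness used to transport the $\OqG$-Koszul resolution across the embedding into $\Aplus_{g,r}$, which is precisely what Proposition~\ref{prop:flat-ann-embd} supplies.
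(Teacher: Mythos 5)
Your reduction of $Li^\ast_+$ (resp.\ $L\delta_+$) to $\field\otimes^{\mathbb{L}}_{\OqMat}(-)$ via the flatness of $\OqMat\hookrightarrow\Aplus_{g,r}$ is fine, but the key homological step fails for three of the four Ore sets. The ``multiplication by $x$ is null-homotopic, hence Tor vanishes when $x$ acts invertibly'' trick is an argument about \emph{central} elements: right multiplication by $x$ on the Koszul resolution $K^\bullet$ is a morphism of right $\OqMat$-modules only when $x$ is central, and only module maps can be compared by the lifting theorem and then transported through $-\otimes_{\OqMat}N$. Of your chosen elements only $x_1=\detq(A)-1$ is central in $\OqMat$; the elements $a^1_2$, $a^2_1$ and $a^2_2-1$ merely $q^{\pm2}$-commute with the other generators. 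Concretely, your map $\phi_{x_i}$ (``diagonal left multiplication by $x_i$ on each copy of $N$'') is not even a chain endomorphism of $K^\bullet\otimes_{\OqMat}N$: the differentials are left multiplication by matrices whose entries do not commute with $x_i$, so commuting $x_i$ past them produces the shifted elements $a^2_2-q^{2k}$, $\detq(A)-q^{2k}$, etc. (this is exactly why the Ore sets $S_1,S_4$ in Proposition~\ref{prop:Ore-sets} must contain the whole family of $q$-shifts rather than a single element). So your argument proves the claim only for $S_1$ in the $\GL_2$ case; for $S_2,S_3,S_4$ the null-homotopy does not exist as stated and cannot be obtained from the comparison theorem, nor can it be repaired by picking a different single element, since no central element of $\OqMat$ lying in $\ker\epsilon$ becomes invertible in those localisations.

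The paper avoids this entirely: it base-changes along the (flat) localisation and observes that the transfer bimodule itself is $S_i$-torsion, so
\[
Li^\ast_+(S^{-1}_iM)\;\cong\;\Aplus_{g-1\rightarrow g}\otimes^{\mathbb{L}}_{\Aplus_{g,r}}S^{-1}_iM\;\cong\;S^{-1}_i\Aplus_{g-1\rightarrow g}\otimes^{\mathbb{L}}_{S^{-1}_i\Aplus_{g,r}}S^{-1}_iM\;\cong\;0,
\]
the point being that the cyclic generator $\bar1$ of $(A-I)\backslash\Aplus_{g,r}$ is annihilated on the right by each $x_i\in S_i$, which is invertible after localisation, so the localised bimodule vanishes. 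If you want to keep your Koszul-complex picture, the honest fix is in this spirit: work over $S_i^{-1}\Aplus_{g,r}$ (or build an explicit contracting homotopy using inverses of the full $q$-shifted family in $S_i$), rather than a single null-homotopy argument with one non-central element.
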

\begin{proof}
	It suffices to check that $S^{-1}_i \Aplus_{g-1\rightarrow g} \cong 0$ as
	\[Li^\ast_+ (S^{-1}_i M) \cong \Aplus_{g-1\rightarrow g}\otimes^{\mathbb{L}}_{\Aplus_{g,r}} S^{-1}_i M \cong S^{-1}_i \Aplus_{g-1\rightarrow g} \otimes^{\mathbb{L}}_{S^{-1}_i\Aplus_{g,r}} S^{-1}_i M\cong 0~.\]
	This follows directly, since by definition $\Aplus_{g,r}$ is $S_i$-torsion. 
    Similarly, for the separating case and $L\delta_+$.
\end{proof}

\subsubsection*{Intersection of Ore sets}

The product $S\cdot S' = \{s\cdot s' \mid s\in S, s'\in S'\}$ of two Ore sets $S,S'$ satisfies the Ore conditions, but it is not multiplicatively closed in general. Nevertheless, the multiplicative closure $\langle S\cdot S'\rangle$ of $S\cdot S'$ is an Ore set \cite[Prop. 6.9]{Skoda}. Let $S_i$ be the four Ore sets considered for the affine cover of $A\neq \unit$. For any ordered tuple $(i_1 < \dots < i_k)$ of integers $i_1,\dots, i_k \in \{1,\dots,4\}$ define the multiplicative closures 
\begin{equation*}
S_{i_1\dots i_k} := \langle S_{i_1} \cdots S_{i_k}\rangle ~.
\end{equation*}
These will be the Ore sets corresponding to the intersections $U_{i_1\dots i_k}= \bigcap_{j=1}^k U_{i_j}$. 
\begin{remark}
In fact, $S_1$ and $S_4$ behave well with respect to the other Ore sets in that the images of $S_1$ and $S_4$ under $\phi_i: A\rightarrow S^{-1}_i A$ are still Ore sets in $S^{-1}_i A$. In particular 
\[S^{-1}_{1i}A \cong \phi_i(S_1)^{-1} S^{-1}_i A\]
and similarly for $S_4$. 
\end{remark}

\subsection{Kashiwara's theorem}

Let $M$ be a finitely generated $\A_{g,r}$-module which is $q$-supported in $\{A=I\}\subset G^{2(g+r-1)}$, \ie 
\[M = t_{S_1}^{(1)}(M) \cap t_{S_2}(M)\cap t_{S_3}(M)\cap t_{S_4}^{(1)}(M)~.\]

Recall that the $q$-support condition is the $q$-analogue of set-theoretic support.  The stronger notion, of scheme-theoretic support is captured by
\[M_0 := \ker(\detq(A)-1)\cap \ker(a^1_2)\cap \ker(a^2_1)\cap \ker(a^2_2 -1)~\subset M,\]
with its natural $\A'$-module structure, where $\A'$ is $\A_{g-1,r}$ in the non-separating case and $\A_{g_1,r_1}\boxtimes \A_{g_2,r_2}$ in the separating case.

\begin{remark}
We note that for $G= \SL_2$ and any $\A_{g,r}$-module $M$, we already have $t^{(1)}_{S_1}(M)=\ker(\detq(A)-1)=M$ as by definition it is annihilated by $\detq(A) -1$.
\end{remark}

Since $q$ is generic we have a $\mathbb{Z}^2$-grading
\begin{equation*}
    M = \bigoplus_{(j,j') \in \mathbb{Z}^2}M_{j,j'}
\end{equation*}
for $G=\GL_2$, where $M_{j,j'} := \ker(d_j)\cap \ker(t_{j'})$ for $d_j := \detq(A) -q^{2j}$ and $t_{j'}:= a^2_2 - q^{2j'}$. 
Similarly, we have 
\[M = \bigoplus_{j\in \frac{1}{2}\mathbb{Z}} M_{1,j}\]
for $\SL_2$. Moreover, there is an $\mathbb{N}^2$-filtration (with the product order) of $M$ by 
\begin{equation*}
    M^{(k,l)} := \ker(a^1_2)^k \cap \ker(a^2_1)^l\subset M
\end{equation*}
In this notation we have 
\begin{equation*}
M_0 := M_{0,0}^{(1,1)} = \ker(d_0)\cap \ker(a^1_2) \cap \ker(a^2_1) \cap \ker(t_0) \subset M~.
\end{equation*}
It is immediate from the Ore conditions that for $\GL_2$ (similarly for $\SL_2$) 
\begin{align*}
    &a^1_1: M^{(k,l)}_{j,j'} \rightarrow M^{(k,l)}_{j,j'} &\\
    &a^1_2: M^{(k,l)}_{j,j'} \rightarrow M^{(k-1,l+1)}_{j,j'+1} &\\
    &a^2_1: M^{(k,l)}_{j,j'} \rightarrow M^{(k+1,l-1)}_{j,j'-1} &\\
    &a^2_2: M^{(k,l)}_{j,j'} \rightarrow M^{(k,l)}_{j,j'}
\end{align*}
and
\begin{align*}
    &\textit{(non-separating)} && \textit{(separating)}\\ 
    &\partial^1_1: M^{(k,l)}_{j,j'} \rightarrow M^{(k+1,l+1)}_{j+1,j'} \oplus M^{(k+1,l+1)}_{j+1,j'+1} &&f^1_1,f^1_2: M^{(k,l)}_{j,j'} \rightarrow M^{(k,l+1)}_{j-1,j'}  \\
    &\partial^1_2: M^{(k,l)}_{j,j'} \rightarrow M^{(k,l+2)}_{j+1,j'+1} \oplus M^{(k,l+2)}_{j+1,j'+2} && f^2_1,f^2_2: M^{(k,l)}_{j,j'} \rightarrow M^{(k+1,l)}_{j-1,j'-1} \\
    &\partial^2_1: M^{(k,l)}_{j,j'} \rightarrow M^{(k+1,l)}_{j+1,j'} && \\
    &\partial^2_2: M^{(k,l)}_{j,j'} \rightarrow M^{(k,l+1)}_{j+1,j'+1} &&
\end{align*}

Recall that Kashiwara's theorem gives an equivalence between the subcategory of $\D$-modules on a smooth variety $X$ which are set-theoretically supported on a closed subvariety $Z$, and the category of $\D(Z)$-modules (see \cite[Thm.\ 1.6.1]{Hotta}).  In particular, denoting the inclusion by $i:Z\to X$, Kashiwara's theorem shows that a $\D(X)$-module $M$ set-theoretically supported on $Z$ agrees with the pushforward $i_*M_0$, where $M_0=i^*M[n]$ is the pullback of $M$ shifted by the codimension $n=\operatorname{codim}Z$.

The main result of this section is that the $\A'$-submodule $M_0$ of the holonomic $\A_{g,r}$module $M$ is also holonomic.  It is proved by establishing injectivity of the quantum analogue of the map $i_*i^*M[n]\to M$ as in Kashiwara's theorem:
\begin{proposition}\label{prop:kernel}
    If $M$ is a holonomic $\A_{g,r}$-module, then $M_0$ is holonomic as an $\A'$-module. 
\end{proposition}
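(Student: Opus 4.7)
The plan is to prove Proposition \ref{prop:kernel} as a quantum Kashiwara-type theorem: $M_0$ will be identified as the $q$-analog of the ``fiber'' of $M$ along the closed immersion $G^{g+r-2}\hookrightarrow G^{g+r-1}$, and its holonomicity will follow from a PBW-type dimension count combined with Bernstein's inequality.

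First I would verify that $M_0$ is in fact stable under the $\A'$-action. Reading off the braided commutation relations \eqref{eq:braided-Oq-Oq} and \eqref{eq:braided-Oq-OqFRT}, each commutator $[\tilde{b}^i_j, a^k_l]$ (or $[\tilde{f}^i_j,a^k_l]$) lies in the left ideal generated by $a^1_2,\, a^2_1,\, \detq(A)-1,\, a^2_2-1$, so if $m\in M_0$ then $a^k_l(\tilde{b}^i_j\, m) = \tilde{b}^i_j(a^k_l m) + (\text{ideal})\cdot m = 0$, and similarly for $\detq(A)-1$ and $a^2_2-1$. Thus $M_0$ is an honest $\A'$-submodule of $M$.

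The heart of the proof is to construct an injective $\A'$-linear map
\[
\Phi:\A_{g-1\to g}\otimes_{\A'} M_0 \hookrightarrow M,\qquad x\otimes m\mapsto x\cdot m.
\]
Because $\A_{g-1\to g}\cong \OqG\totimes \A'$ as a vector space (with $\OqG\cong (A-I)\backslash\DqG$ generated by the $\partial^i_j$'s), this is essentially the map $\partial^\alpha\otimes m\mapsto \partial^\alpha m$, and likewise $f^\alpha\otimes m\mapsto f^\alpha m$ in the separating case. Injectivity of $\Phi$ is to be established by induction on the total degree $|\alpha|$: the shift rules displayed immediately before the proposition show that $\partial^\alpha$ carries $M_0\subseteq M^{(1,1)}_{0,0}$ into $M^{(1+|\alpha|,1+|\alpha|)}_{j,j'}$ for prescribed $(j,j')$ depending on $\alpha$, so distinct multi-indices land in distinct layers of the compatible $(k,l)$-filtration and $(j,j')$-grading. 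Collecting any putative relation $\sum_\alpha \partial^\alpha m_\alpha=0$ by its leading bi-layer forces $m_\alpha=0$ at $q=1$, where the statement reduces to the classical Kashiwara theorem for $\D(G^{g+r-1})$-modules supported on $G^{g+r-2}$, and hence also for generic $q$ by flatness of $\A_{g,r}$ in the quantum parameter.

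With $\Phi$ injective, choose a good $\A_{g,r}$-filtration $F_n M$ with $\dim F_n M\le C\,n^{(g+r-1)\dim G}$ (possible since $M$ is holonomic) and set $F_n M_0:=F_n M\cap M_0$. The natural filtration of $\OqG$ satisfies $\dim F_n\OqG \sim c\,n^{\dim G}$, and $\Phi(F_n\OqG\otimes F_n M_0)\subseteq F_{C'n}M$, so injectivity yields
\[
c\,n^{\dim G}\cdot \dim F_n M_0 \;\leq\; \dim F_{C'n} M \;\leq\; C''\, n^{(g+r-1)\dim G},
\]
whence $\GKdim_{\A'}(M_0)\le (g+r-2)\dim G$. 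Since $\A'$ is a deformation quantization of a symplectic variety of dimension $2(g+r-2)\dim G$, Bernstein's inequality (Proposition \ref{prop:Gabber} combined with Proposition \ref{prop:DQ-GKdims}) forces $\GKdim_{\A'}(M_0)\ge (g+r-2)\dim G$ as soon as $M_0$ is finitely generated and nonzero, giving equality and hence holonomicity. Finite generation of $M_0$ over $\A'$ drops out of the same layered induction: any finite $\A_{g,r}$-generating set $m_1,\dots,m_s$ of $M$ can be written as $m_i=\sum_\alpha \partial^\alpha m_{i,\alpha}$ with $m_{i,\alpha}\in M_0$ using that set-theoretic support at $\{A=I\}$ lets one repeatedly lower the $(k,l)$-filtration indices, and Noetherianness of $\A'$ then extracts a finite $\A'$-generating set for $M_0$.

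The most delicate point will be making the leading-layer argument for injectivity of $\Phi$ precise. The commutation relations between $\partial^i_j$ and $a^k_l$ produce lower-order correction terms that can in principle mix the $(k,l)$-layers, so one must equip $M$ with a $q$-analog of the associated graded in which these corrections are genuinely lower order, and then reduce to the classical Kashiwara statement at $q=1$ using flatness of all the relevant algebras over $\Cqloc$.
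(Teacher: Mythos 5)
Your strategy coincides in outline with the paper's: the pivotal step is to show that the natural map $\OqG\otimes M_0\to M$ (equivalently, the directness of $\sum_b bM_0$, which is the content of Lemmata~\ref{lem:ns-direct-sum} and~\ref{lem:s-direct-sum}) is injective, and then run a GK-dimension count. You derive only the upper bound $\GKdim(M_0)\leq (g+r-2)\dim G$ from injectivity of $\Phi$ and obtain the reverse inequality from Bernstein's inequality, whereas the paper reads off equality directly from $\GKdim(\widetilde{M})=\GKdim(M_0)+\dim G$ together with holonomicity of $\widetilde M\subset M$; either route is sound once injectivity and finite generation are available.

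The genuine gap is in your injectivity argument. The proposed reduction to the classical Kashiwara theorem at $q=1$ is not available: the whole argument is carried out over $\Cqloc$, in which $q=1$ is not a closed point, so a $\Cqloc$-module $M$ has no canonical specialisation at $q=1$ from which conclusions could be transferred back by ``flatness''. Indeed the reason a quantum Kashiwara theorem must be proved at all is precisely that the classical statement does not carry over. Your ``leading bi-layer'' device on its own also does not isolate monomials: the $(j,j')$-shifts of $\partial^1_1$ and $\partial^1_2$ are non-deterministic (each shifts $j'$ by either of two values), and the $(k,l)$-level is only a filtration, so distinct $\partial$-monomials of the same total degree --- e.g.\ $(\partial^1_1)^2$ and $\partial^2_1\partial^2_2$ --- can land in nested $(k,l)$-levels with the same $(j,j')$-degree and cannot be separated by taking a leading layer. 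The paper's Lemmata~\ref{lem:ns-direct-sum} and~\ref{lem:s-direct-sum} instead perform a concrete $q$-deformed induction inside the algebra: using identities such as $a^2_1 (f^1_j)^N = q^{-N}(f^1_j)^N a^2_1 + q^{-1}\numberq{-2N}{q}f^2_j(f^1_j)^{N-1}a^2_2$ (and their $\partial$-analogues), one first establishes that $f^i_j$, resp.\ $\partial^i_j$, act torsion-freely on $M_0$, and then one peels off $\partial$-degrees by successively acting with $a^1_2$, $a^2_1$ and $\detq(A)$. That explicit computation is the irreducible content of the proposition and cannot be delegated to a classical-limit argument. (Your assertion of finite generation of $M_0$ over $\A'$ is likewise unsubstantiated; it does, however, fall out of the direct-sum decomposition once it is proved, which is essentially also how the paper handles it.)
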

\begin{proof}
    Let $\widetilde{M}\subset M$ be the $\A_{g,r}$-module generated by $M_0$. As a submodule of $M$ it is also holonomic with $\GKdim(\widetilde{M}) = (g-r+1)\dim(G)$. By Lemmata~\ref{lem:ns-direct-sum} and \ref{lem:s-direct-sum} proved below, $\widetilde{M}$ is of the form $\OqFRT\otimes M_0$ in the separating resp.\ $\OqG\otimes M_0$ in the non-separating case and thus 
    \[\GKdim(\widetilde{M}) = \GKdim(M_0)+\dim(G)~.\]
    In particular, $M_0$ is holonomic as an $\A'$-module. 
\end{proof}
\subsubsection*{Separating case}

The following identities can be easily checked via using the algebra relations and induction on $N$: 
\begin{align}
    a^2_1 \left(f^1_j\right)^N &= q^{-N}\left(f^1_j\right)^{N} a^2_1 + q^{-1}\numberq{-2N}{q}f^2_j \left(f^1_j\right)^{N-1}a^2_2\label{eq:a21-f1jpowers}\\
    a^1_2 \left(f^2_j\right)^N &= q^{-N}\left(f^2_j\right)^{N} a^1_2 + \numberq{-2N}{q}f^1_j \left(f^2_j\right)^{N-1}a^2_2\label{eq:a12-f2jpowers}
\end{align}
Using these relations, one can show by induction that $f^i_j$ acts without torsion on $M_0$, \ie $(f^i_j)^N m = 0$ implies $m=0$. For instance, if $(f^1_1)^N m = 0$ then acting by $a^2_1$ and using \eqref{eq:a21-f1jpowers} gives $f^2_1(f^1_1)^{N-1}m = 0$. In particular, $\detq(F)(f^1_1)^{N-1}m = f^2_2 (f^1_1)^Nm - qf^1_2f^2_1 (f^1_1)^{N-1}m  = 0$ and the result follows by induction. The same argument applies for $(f^1_2)^N m = 0$ and using \eqref{eq:a12-f2jpowers} for $(f^2_j)^N m =0$. 

\begin{lemma}\label{lem:ns-direct-sum}
    Let $\mathcal{B}$ denote the basis of $\OqFRT$. Then, $\sum_{b\in\mathcal{B}}b M_0$ is a direct sum. In particular, $M_0$ generates the submodule $\OqFRT\otimes M_0$ in $M.$
\end{lemma}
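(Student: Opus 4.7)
The claim is equivalent to injectivity of the multiplication map $\mu\colon \OqFRT\otimes M_0\to M$, $b\otimes m\mapsto bm$, whose image is $\widetilde M$. The plan is to fix a PBW basis $\mathcal{B}=\{F^{\mathbf{n}}:=(f^1_1)^{n_1}(f^1_2)^{n_2}(f^2_1)^{n_3}(f^2_2)^{n_4}\}$ of $\OqFRT$ (augmented by $\detq(F)^{\pm 1}$ for $\GL_2$, reduced by $\detq(F)=1$ for $\SL_2$), assume a nontrivial finite relation $\sum_{\mathbf{n}}F^{\mathbf{n}}m_{\mathbf{n}}=0$ with $m_{\mathbf{n}}\in M_0$, and derive a contradiction via a weight decomposition followed by a lowering induction.

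First I would set up the weight decomposition. By \eqref{eq:detqF-commutation} and the explicit commutation relations of $a^2_2$ with each $f^i_j$, the element $F^{\mathbf{n}}m$ (with $m\in M_0$) is a joint eigenvector of the commuting semisimple operators $\detq(A)$ and $a^2_2$, with eigenvalues determined by the totals $n_1+n_2+n_3+n_4$ and $n_3+n_4$. Since $q$ is generic, distinct pairs $(P,Q):=(n_1+n_2,\,n_3+n_4)$ correspond to distinct eigenvalues, so the relation decouples into independent subrelations indexed by $(P,Q)$. For $\SL_2$ the $\detq(A)$-grading is trivial; in its place I would use the PBW filtration of $\OqFRT$ by total $f$-degree and pass to the associated graded, reducing to the commutative analogue.

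Within a fixed $(P,Q)$-block I would induct on $Q$. Applying $a^1_2$ uses the relation $a^1_2 f^1_j=q^{-1}f^1_j a^1_2$ together with the iterated form of \eqref{eq:a12-f2jpowers} to convert each $f^2_j$ factor into an $f^1_j$ factor modulo terms of strictly lower $f^2$-degree, while $a^1_2$ annihilates $M_0$. A careful accounting of the resulting scalars -- products of terms of the form $q^{-2k}\numberq{2}{q}$, all nonzero for $q$ generic -- should show that this lowering operator is injective on each $(P,Q)$-block, closing the induction step. The base case $Q=0$ reduces to $\sum_{n_1+n_2=P}(f^1_1)^{n_1}(f^1_2)^{n_2}m_{\mathbf{n}}=0$, which will follow from the $q$-commutation $f^1_1 f^1_2=qf^1_2 f^1_1$ together with the torsion-freeness of each $f^1_j$ on $M_0$ already established in the paragraph preceding the lemma, via a secondary induction on $\min(n_1,n_2)$.

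The hard part will be verifying that the lowering operator $a^1_2\cdot(-)$ is genuinely injective on each $(P,Q)$-block: this amounts to showing that certain products of shifted $q$-numbers, arising from iterating the commutation relations across general PBW monomials in all four $f^i_j$, do not vanish for $q$ generic. These combinatorial identities should be tractable but will require careful bookkeeping, especially in reconciling the $\GL_2$ argument with the $\SL_2$ case where one must additionally quotient by the relation $\detq(F)=1$.
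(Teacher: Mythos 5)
Your block decomposition via the joint action of $\detq(A)$ and $a^2_2$ is correct for $\GL_2$ and coincides with how the paper separates degrees (it acts by $a^2_2$ to detect the $f^2$-degree and by $\detq(A)$ to detect total degree). The genuine gap is in the core induction step: the lowering operator $a^1_2\cdot(-)$ is \emph{not} injective on a $(P,Q)$-block, and the failure is structural rather than a matter of $q$-number bookkeeping. Carry out your step for $Q=1$ and arbitrary $P$: applying $a^1_2$ to $\sum_{n_1+n_2=P}\bigl((f^1_1)^{n_1}(f^1_2)^{n_2}f^2_1\,m_{\mathbf n,1}+(f^1_1)^{n_1}(f^1_2)^{n_2}f^2_2\,m_{\mathbf n,2}\bigr)=0$, using \eqref{eq:a12-f2jpowers} together with $a^1_2 M_0=0$ and $a^2_2$ acting as $1$ on $M_0$, and then invoking the $Q=0$ case after PBW reordering, yields only relations of the form $q^{-b}m_{(a-1,b),1}+m_{(a,b-1),2}=0$ with $a+b=P+1$: that is $P+2$ conditions on $2(P+1)$ unknowns, so for $P\geq 1$ the individual $m_{\mathbf n}$ do not vanish. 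More conceptually, $\detq(F)$ $q^2$-commutes with $a^1_2$ by \eqref{eq:detqF-commutation}, so $a^1_2\,\detq(F)\,m=0$ for every $m\in M_0$, and the PBW expansion $\detq(F)m=f^1_1f^2_2\,m-q\,f^1_2f^2_1\,m$ is exactly a $(1,1)$-block pattern invisible to your lowering map; in general the $(P,Q)$-block has $(P+1)(Q+1)$ monomials while its image block has only $(P+2)Q$, so injectivity is impossible whenever $P\geq Q$.

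The missing ingredient is precisely how the paper closes the induction: after applying the lowering operator (it uses $a^2_1$ and \eqref{eq:a21-f1jpowers} for the $f^1$-pair, and $a^1_2$ for the $f^2$-pair), one multiplies back by a suitable generator ($f^1_2$, resp.\ $f^2_1$) and regroups the outcome via the identity $\detq(F)=f^1_1f^2_2-qf^1_2f^2_1$; since $\detq(F)$ acts invertibly for $\GL_2$ (and equals $1$ for $\SL_2$), it can be cancelled, and the induction on degree proceeds, with torsion-freeness of the $f^i_j$ on $M_0$ (which you invoke only in your base case) finishing the step. The paper also first settles the two-variable sums in $(f^1_1)^k(f^1_2)^l$ and in $(f^2_1)^k(f^2_2)^l$ separately and only afterwards combines them with the $a^2_2$ and $\detq(A)$ eigenvalue arguments. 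Finally, your $\SL_2$ fallback (passing to the associated graded to ``reduce to the commutative analogue'') is not justified: there $\detq(A)=1$, and the relation $\detq(F)=1$ must be used directly inside the induction, as above, rather than through an associated-graded reduction.
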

\begin{proof}
    We first show that $\sum_{k+l=N}(f^1_1)^k (f^1_2)^lM_0$ is a direct sum by induction on $N$. Suppose we have 
    \[\sum_{k+l=N}{(f^1_1)^k (f^1_2)^l m_{k}} =0~.\]
    Acting by $a^2_1$ and using \eqref{eq:a21-f1jpowers} we obtain 
    \[\sum_{k+l=N}{\left(q^{-k}\numberq{-2l}{q}f^2_2 \left(f^1_1\right)^{k}\left(f^1_2\right)^{l-1} + \numberq{-2k}{q}f^2_1 \left(f^1_1\right)^{k-1}\left(f^1_2\right)^{l}\right) m_k} = 0\]
    Further acting by $f^1_2$ and using the definition of $\detq(F)$ the above equation becomes 
    \[\sum_{\substack{k+l=N\\ k>0}}{\left(q^{-2k-1}\numberq{-2l}{q}\detq(F) + \numberq{-2N}{q}f^1_2f^2_1 \right) (f^1_1)^{k-1}(f^1_2)^lm_k} = -q^{-1}\numberq{-2N}{q}f^2_2(f^1_2)^{N}m_0~. \]
    Applying the initial assumption one can easily deduce 
    \[\detq(F)\sum_{\substack{k+l=N\\ k>0}}{\numberq{-2k}{q}(f^1_1)^{k-1} (f^1_2)^l m_k} = 0~.\] By induction we have $m_{k} = 0$ for all $k>0$ which implies $(f^1_2)^l m_0 = 0$. Since $f^1_2$ acts without torsion we have shown that $m_k=0$ for all $l$. 

    The same argument applies to show that the sum $\sum_{k+l = N}(f^2_1)^k (f^2_2)^l M_0$ is direct where we instead act by $a_1^2$ and $f^2_1$. This can be used to show that $\sum_{k+l+r+s =N}(f^1_1)^k (f^1_2)^l(f^2_1)^r (f^2_2)^sM_{0}$ is direct, by acting with $a^2_2$ which commutes with $f^1_j$ but $q^2$-commutes with $f^2_j$. Finally, to show that $\sum_{k,l,r,s}M_0^{(k,l,r,s)}$ is a direct sum, one can act by $\detq(A)$ which detects the total degree $k+l+r+s$ and use the above.    
\end{proof}

\subsubsection*{Non-separating case}

The following identities can be checked via induction: 
\begin{align}
    a^2_1 (\partial^2_2)^N &= q^{2N} (\partial^2_2)^N a^2_1 + \numberq{2N}{q}\partial^2_1 (\partial^2_2)^{s-1} a^2_2\label{eq:a21-d22powers}\\ 
    a^1_2 (\partial^2_1)^{N} &= (\partial^2_1)^N + q^{-2}\numberq{2N}{q}\partial^2_2 (\partial^2_1)^{N-1} a^2_2 \label{eq:a12-d21powers}
\end{align}
These relations can be used to show that $\partial^i_j$ acts without torsion on $M_0$. For instance, applying $(a^2_1)^N$ to the relation $(\partial^1_1)^N =0$ implies $(\partial^2_1)^Nm=0$. Thus, it suffices to check that $\partial^1_2,\partial^2_1,\partial^2_2$ act without torsion. For instance, if $(\partial^2_1)^N m=0$ then applying $a^1_2$ and \eqref{eq:a12-d21powers} implies $\partial^2_2 (\partial^2_1)^{N-1} m$. Thus, $\detq(D) (\partial^2_1)^{N-1}m=0$ and the claim follows by induction. The same argument applies if $(\partial^2_2)^N m = 0$ by using $\eqref{eq:a21-d22powers}$.  

\begin{lemma}\label{lem:s-direct-sum}
    Let $\mathcal{B}$ denote the basis of $\OqG$. Then, $\sum_{b\in\mathcal{B}}b M_0$ is a direct sum. In particular, $M_0$ generates the submodule $\OqG\otimes M_0$ in $M.$
\end{lemma}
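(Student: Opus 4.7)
The plan is to mirror closely the strategy of the preceding Lemma \ref{lem:ns-direct-sum}, now using the identities \eqref{eq:a21-d22powers} and \eqref{eq:a12-d21powers} in place of \eqref{eq:a21-f1jpowers} and \eqref{eq:a12-f2jpowers}, and exploiting the central element $\detq(D) = \partial^1_1\partial^2_2 - q^2\partial^1_2\partial^2_1$. The crucial analytic input is that $\detq(D)$ acts invertibly on all of $M$: for $G=\GL_2$ it is inverted in $\DqGL{2}$ by definition, and for $G=\SL_2$ it is identified with $1$. This plays precisely the role that $\detq(F)$ played in the separating case.

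First, I would show by induction on $N$ that the sum $\sum_{k+l=N}(\partial^2_1)^k(\partial^2_2)^l M_0$ is direct. Starting from a vanishing combination $\sum_{k+l=N}(\partial^2_1)^k(\partial^2_2)^l m_k = 0$, I would apply $a^1_2$ (using \eqref{eq:a12-d21powers}) and $a^2_1$ (using \eqref{eq:a21-d22powers}) and exploit that $a^2_2$ acts trivially on $M_0$ to extract a factor of $\detq(D)$ multiplying a lower-$N$ relation; invertibility of $\detq(D)$ together with the inductive hypothesis then forces all $m_k = 0$. The torsion-freeness of the $\partial^i_j$-actions on $M_0$, noted in the paragraph preceding the lemma, takes care of the base case $N=1$ and the edge terms in the induction.

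Second, I would derive (from the reflection-equation relations in $\DqG=\OqG\sharp\OqG$) the analogues of \eqref{eq:a21-d22powers}--\eqref{eq:a12-d21powers} for powers of $\partial^1_j$, and apply the same inductive argument to conclude that $\sum_{k+l=N}(\partial^1_1)^k(\partial^1_2)^l M_0$ is direct. Then, exactly as in the separating case, I would combine the two directions: since $a^2_2$ distinguishes $\partial^1_j$ from $\partial^2_j$ by its commutation $q$-weight, acting by $a^2_2$ on a vanishing combination $\sum (\partial^1_1)^k(\partial^1_2)^l(\partial^2_1)^r(\partial^2_2)^s m_{klrs}=0$ separates it into homogeneous pieces, reducing to the previous two steps; and finally $\detq(A)$ detects the total PBW degree on $M_0$, upgrading the decomposition to a direct sum over the full PBW basis $\mathcal{B}$ of $\OqG$.

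The main obstacle, I expect, will be the extraction of the $\detq(D)$ factor in the inductive step: unlike in the separating case, where $\OqFRT$ commutes with $\OqG$ through the matrix $R$ in a comparatively clean way, in the non-separating case the reflection-equation produces a denser cascade of correction terms when commuting $a^i_j$ past powers of $\partial^i_j$, and one must carefully track which combinations collapse into a $\detq(D)$-multiple modulo the inductive hypothesis. Deriving the $\partial^1_j$-analogue of \eqref{eq:a21-d22powers}--\eqref{eq:a12-d21powers} (required for the second step) is the main technical bookkeeping; the high-level logical structure of the argument is otherwise identical to the separating case.
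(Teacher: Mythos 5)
Your Steps 1 and 2 are in the spirit of the paper's argument (and of the separating case), but the combination step contains a genuine gap. The claim that $a^2_2$ distinguishes $\partial^1_j$ from $\partial^2_j$ by a clean commutation $q$-weight is false in the non-separating case: unlike the separating case, where $a^2_2$ commutes with $f^1_j$ and $q^{\pm2}$-commutes with $f^2_j$, here the defining relations give $[\partial^1_1,a^2_2]_{q^{\varepsilon}} = -\numberq{2}{q}\,\partial^2_1 a^1_2 - q^{-2}\numberq{2}{q}^2\,\partial^2_2 a^2_2$ and $[\partial^1_2,a^2_2]_{q^{-2+\varepsilon}} = \numberq{-2}{q}\,\partial^2_2 a^1_2$, so conjugation by $a^2_2$ acts on the $\partial$'s only triangularly, not diagonally. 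This is exactly what the weight table before the lemma records: $\partial^1_1$ and $\partial^1_2$ send $M^{(k,l)}_{j,j'}$ into a sum of \emph{two} distinct $a^2_2$-weights, whose shifts coincide with those of $\partial^2_1$ and $\partial^2_2$. Consequently a monomial $(\partial^1_1)^k(\partial^1_2)^l(\partial^2_1)^r(\partial^2_2)^s m$, $m\in M_0$, is not $a^2_2$-homogeneous and its weights mix the $\partial^1$- and $\partial^2$-degrees, so acting by $a^2_2$ does not split a vanishing combination as you propose. Moreover, even granting such a splitting, reducing to your Steps 1 and 2 would only prove independence of the \emph{pure} monomials in $\partial^2_1,\partial^2_2$ and in $\partial^1_1,\partial^1_2$; the mixed monomials, which make up most of the PBW basis of $\OqG$, would be left untreated.

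The paper's proof avoids this: it uses $\detq(A)$ (each $\partial^i_j$ raises the $\detq(A)$-weight by one) only to fix the total degree $N$, and then shows directly that $\sum_{k+l+r+s=N}(\partial^1_1)^k(\partial^2_1)^l(\partial^2_2)^r(\partial^1_2)^s M_0$ is direct by peeling off variables one at a time in a specific order: act by $a^1_2$ to lower the degree in $\partial^1_1$ and then in $\partial^2_1$, and once only $\partial^2_2,\partial^1_2$ remain, act by $a^2_1$ to lower the degree in $\partial^2_2$ and then in $\partial^1_2$; at each stage one extracts a $\detq(D)$-multiple of a lower-degree relation, exactly the engine of your Step 1 and of Lemma~\ref{lem:ns-direct-sum}, with torsion-freeness of the $\partial^i_j$ handling the edge cases. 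The ordering is chosen precisely to absorb the triangular correction terms above, so the induction must be run on the full four-variable monomials in this order rather than assembled from the two pure families via an $a^2_2$-weight decomposition. (A minor point: $\detq(D)$ is central only for $\SL_2$; for $\GL_2$ it $q^2$-commutes with the $a^i_j$, but it is invertible after localisation, which is all that is needed.)
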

\begin{proof}
    Similar to the proof of Lemma~\ref{lem:s-direct-sum} we act by either $a^1_2$ or $a^2_1$ to provide induction. For instance, since $\detq(A)$ detects the total degree in $\partial$'s, we only need to prove that $\sum_{k+l+r+s = N}(\partial^1_1)^k(\partial^2_1)^l(\partial^2_2)^r(\partial^1_2)^sM_0$ is direct. To do so, we first use $a^1_2$ to reduce the degree in $\partial^1_1$ and subsequently in $\partial^2_1$. After reducing to $\sum_{r+s=N}(\partial^2_2)^r(\partial^1_2)^sM_0$ we use $a^2_1$ to reduce the degree in $\partial^2_2$ and then $\partial^1_2$. 
\end{proof}

\subsection{The long exact sequence}\label{subsec:les}

Let now $M$ be a holonomic $\A_{g,r}$-module. From the localisation maps $\phi_i: M\rightarrow S^{-1}_iM$ and the results of the previous section we have long exact sequences of holonomic modules 
\begin{equation*}
    0 \rightarrow K_i:= \ker(\phi_i)\rightarrow M \xrightarrow{\phi_i} M_i \rightarrow C_i:= \operatorname{coker}(\phi_i)\rightarrow 0~.
\end{equation*}
The module $K_i$ is the $S_i$-torsion submodule of $M$, \ie 
\[K_i =\{m\in M\mid \exists P\in S_i: P.m =0\}\]
while $C_i$ is also $S_i$-torsion. 

\begin{theorem}\label{thm:preservation}
Let $F: \A_{g,r}\modu\rightarrow \A'\modu$ denote the transfer functor $F=i^\ast$ in the non-separating case resp.\ $F=\delta$ in the separating case and $\A'=\A_{g-1,r}$ resp.\ $\A'=\A_{g_1,r_1}\boxtimes \A_{g_2,r_2}$.  
Let $M$ be a holonomic $\A_{g,r}$-module for $G = \GL_2, \SL_2$. Then, the derived functor $LF( M)$ is an object in $\D^b_\mathrm{hol}(\A')$, \ie $H^k(LF(M))$ are holonomic $\A'$-modules for all $k$.
\end{theorem}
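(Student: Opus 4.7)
The plan is to follow the Sabbah-style strategy outlined in the Methodology section, in two stages. First, use the Ore localisations of Section \ref{subsec:Ore-Loc} to reduce to the case where $M$ is $q$-supported on the identity locus $Z := \{A = I\}$. Second, for such $M$, compute $LF(M)$ via the Koszul resolution of Section \ref{subsec:GL2-Koszul} and conclude using the Kashiwara-type Proposition \ref{prop:kernel}.

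For the reduction, for each Ore set $S_i$ consider the four-term exact sequence $0 \to K_i \to M \to S_i^{-1}M \to C_i \to 0$ of Section \ref{subsec:les}. By Lemmata \ref{lem:s1}--\ref{lem:s4}, the localisations $S_i^{-1}M$ (or, for $i=1,4$, the filtration pieces $S_i^{(n)}M$) are holonomic, whence so are the $S_i$-torsion modules $K_i,C_i$ by the thick subcategory property (Remark \ref{rem:thick-subcat}). Lemma \ref{lem:Inv-Im-Loc} gives $LF(S_i^{-1}M) \simeq 0$, so the long exact sequence in $H^\ast(LF(-))$ presents each $H^k(LF(M))$ as an extension assembled from $H^k(LF(K_i))$ and $H^{k-1}(LF(C_i))$. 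Iterating over all $i$ (and, for $i=1,4$, over the filtration levels $n$ of $S_i^{(n)}$), the problem reduces to establishing holonomicity of $H^\ast(LF(M))$ when $M$ is simultaneously $S_i$-torsion for every $i$, \ie $q$-supported on $Z$.

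For such $M$, I would replace $LF(M)$ by the explicit bounded complex of $\A'$-modules obtained by tensoring $M$ into the Koszul resolution of $(A-I)\backslash \DqG$. The terms are finite direct sums of copies of $M$, and the differentials involve multiplication by the generators $a^i_j - \delta^i_j$ cutting out $Z$; their action on $M$ is controlled by the $\mathbb{N}^2$-filtration $M^{(k,l)}$ and the $\mathbb{Z}^2$-grading $M_{j,j'}$ introduced before Proposition \ref{prop:kernel}. Passing to the associated graded with respect to $M^{(k,l)}$, each cohomology of the Koszul complex acquires a finite filtration whose successive quotients are finite direct sums of (shifts of) $M_0$-type $\A'$-modules. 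By Proposition \ref{prop:kernel} each such summand is holonomic, so by another appeal to thickness the same holds for all cohomologies of $LF(M)$.

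The main obstacle lies in the second stage: making the associated-graded analysis of the Koszul cohomologies precise. The classical Kashiwara theorem concentrates cohomology in a single degree, whereas the weaker notion of $q$-support available here in principle permits contributions in all Koszul degrees. Since I need only holonomicity and not vanishing, the devissage through the filtration $M^{(k,l)}$ should suffice; the careful bookkeeping of the Koszul differentials against this filtration, in the spirit of the commutation calculations in Lemmata \ref{lem:ns-direct-sum}--\ref{lem:s-direct-sum}, forms the technical core of the argument and will need to be executed separately in the non-separating and separating cases.
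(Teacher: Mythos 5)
Your first stage matches the paper's proof exactly: the four-term exact sequences built from the localisation maps $\phi_i$, the holonomicity of $S^{-1}_iM$ (or $S^{(n)}_iM$ for $i=1,4$) from Lemmata~\ref{lem:s1}--\ref{lem:s4}, the use of Lemma~\ref{lem:Inv-Im-Loc} to kill $LF$ of the localisation, and thickness (Remark~\ref{rem:thick-subcat}) to pass to $K_i$ and $C_i$. The paper organises this slightly differently (it splits explicitly into the $S_i$-torsion and $S_i$-torsion-free cases and uses the degree-shift isomorphism $H^{-k-1}(LF(C_1))\cong H^{-k}(LF(M))$ to reduce the latter to the former), but the content is the same.

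The second stage is where you diverge, and where the gap sits. The paper's conclusion is short: once $M$ is simultaneously $S_i$-torsion for all $i$, it identifies $H^{-\dim G}(LF(M))$ with $M_0=\ker(\detq(A)-1)\cap\ker(a^1_2)\cap\ker(a^2_1)\cap\ker(a^2_2-1)$ via the Koszul resolution and declares that the statement ``reduces to the holonomicity of $M_0$,'' which is then Proposition~\ref{prop:kernel} -- \emph{i.e.}\ the paper is invoking a quantum Kashiwara concentration $LF(M)\simeq M_0[\dim G]$ for $q$-supported $M$. Your instinct to worry about whether concentration survives the passage from scheme-theoretic to $q$-support is a good one, but the alternative you propose is not carried out, and as sketched it misses what actually makes Proposition~\ref{prop:kernel} work. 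You claim the associated graded of the Koszul complex along $M^{(k,l)}$ has successive quotients that are ``finite direct sums of $M_0$-type modules''; this is not justified (the graded pieces $M^{(k,l)}_{j,j'}$ for $(j,j')\neq(0,0)$ are not subquotients of $M_0$, and holonomicity of these pieces is exactly what you are trying to prove), and the concrete content you would need -- the commutation identities and torsion-freeness statements of Lemmata~\ref{lem:ns-direct-sum} and \ref{lem:s-direct-sum} -- is never engaged. The paper's Proposition~\ref{prop:kernel} avoids a filtration argument entirely: it shows that the submodule $\widetilde{M}$ generated by $M_0$ is a \emph{direct sum} $\OqG\otimes M_0$ (resp.\ $\OqFRT\otimes M_0$), hence $\GKdim(M_0)=\GKdim(\widetilde{M})-\dim G\le\GKdim(M)-\dim G$, and reads off holonomicity from this GK-count. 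You should replace the d\'evissage sketch with this direct-sum/GK-dimension argument; without it, the ``technical core'' you defer is precisely the whole of the second stage.
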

\begin{proof}
    Let $ M$ be a holonomic $\A_{g,r}$-module and consider the exact sequence obtained from the localisation map $\phi_1$:
    \[0\rightarrow K_1 \rightarrow M \xrightarrow{\phi_1} S_1^{(1)}(M) \rightarrow C_1 \rightarrow 0~.\] 
    All modules in the above sequence are holonomic $\A_{g,r}$-modules since $S^{(1)}M$ is holonomic by Lemma \ref{lem:s1} while $K_1$ and $C_1$ are holonomic as submodules resp.\ quotients of holonomic modules, see Remark~\ref{rem:thick-subcat}. 
    Splitting the sequence to 
    \[0\rightarrow K_1 \rightarrow M \rightarrow \operatorname{Im(\phi_1)}\rightarrow 0 \quad\text{and}\quad 0\rightarrow \operatorname{Im}(\phi_1) \rightarrow S^{(1)}_1(M)\rightarrow C_1 \rightarrow 0\]
we reduce (by the left short exact sequence) our claim to: 1) $M$ is $S_1$-torsion and 2) $M$ is $S_1$-torsion-free. The short exact sequence on the right reduces case 2) to case 1) in the following way: If $M$ is $S_1$-torsion-free and thus 
\[0\rightarrow M \rightarrow S^{-1}_1M^{(1)} \rightarrow C_1 \rightarrow 0\]
is exact, then applying $ LF$ and Lemma~\ref{lem:Inv-Im-Loc} we obtain isomorphisms $H^{-k-1}(LF( C_1)) \cong H^{-k}(LF(M))$. 

Henceforth, we can assume that $M$ is $S_1$-torsion and holonomic. Repeating for $S_2,S_3,S_4$ using Lemmata \ref{lem:s2}, \ref{lem:s3} and \ref{lem:s4} we reduce  to the case that $M$ is $S_1,S_2,S_3,S_4$-torsion, \ie 
\[M = \bigcap_{i=1}^{4}t_{S_i}(M)~.\]
In particular, the statement reduces to the holonomicity of 
\[M_0:= \ker(\detq{A}-1)\cap \ker(a^1_2)\cap \ker(a^2_1) \cap \ker(a^2_2 -1)= H^{-\dim(G)}(LF (M)),\] 
see \eqref{eq:Koszul-GL2}, \eqref{eq:Koszul-differentials-GL2} for $G=\GL_2$ and \eqref{eq:Koszul-SL2}, \eqref{eq:Koszul-differentials-SL2} for $\SL_2$. The module $M_0$ was shown to be holonomic as an $\A'$-module in Proposition~\ref{prop:kernel}. 
\end{proof}

\subsection{Proof of main theorem}

Finally, we are able to show for $q$ generic that internal skein modules of $3$-manifolds are holonomic over the associated internal skein algebra.

\begin{theorem}\label{thm:main-thm-skein}
Let $M$ be a 3-manifold with a closed boundary surface $\Sigma$ and suppose that $q$ is generic. Let $\Sigma^\ast$ be a surface obtained from removing one disk on each connected component of $\Sigma$ and fixing a gate on each resulting boundary component of non-zero genus. Then the internal skein module 
	\begin{equation*}
		\intskmod_{q,\GL_2}(M)
	\end{equation*} 
	is holonomic over the internal skein algebra $\A_{\Sigma^\ast}$. 
\end{theorem}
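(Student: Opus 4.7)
The plan is to decompose $M$ along a generalised Heegaard splitting and iteratively apply preservation of holonomicity under elementary 2-handle transfer bimodules, established in Theorem~\ref{thm:preservation}.

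First, I would express $M$ as $H_g \cup_{\Sigma_g} C$, where $H_g$ is a handlebody of genus $g$ and $C : \Sigma \to \Sigma_g$ is a compression body with outer boundary $\Sigma_g$ and inner boundary $\Sigma$. By Proposition~\ref{prop:compression}, $C$ further factors as an iterated composition of mapping cylinders, elementary compression bodies $C_i$ (each attaching a single $2$-handle along a standard separating or non-separating curve), and $3$-handles. By Corollary~\ref{cor:int-bimod-gluing}, the internal skein module of $M$ equals the iterated relative tensor product
\[
\intskmod(M) \;\cong\; \intskmod(H_g) \otimes_{\A_{g,1}} \A(C_m) \otimes \cdots \otimes \A(C_1),
\]
up to mapping cylinders and 3-handles, so it is obtained from $\intskmod(H_g)$ by applying the composition of the associated transfer functors.

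Second, I would establish the base case. Viewing $H_g$ upside down, it is itself a compression body obtained from $B^3$ by $g$ successive non-separating 2-handle attachments along standard $A$-cycles. Since $\intskmod(B^3) = \field \in \A_{0,1}\modu$ is finite-dimensional, and thus trivially holonomic, iterated application of Theorem~\ref{thm:preservation} to the non-separating transfer bimodule $\A_{g-1\to g}$ of \eqref{eq:ns-transfer} shows that $\intskmod(H_g)$ is a holonomic $\A_{g,1}$-module.

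Third, I would induct along the standard decomposition of $C$. Each elementary $C_i$ contributes a transfer functor of the form $i^*$ or $\delta_{g_1,g_2}$, for which Theorem~\ref{thm:preservation} guarantees preservation of holonomicity (in each cohomological degree; in particular the underived internal skein module remains holonomic). Mapping cylinders $M_{f_j}$ act by algebra automorphisms of the intermediate $\A_{g',r'}$ and preserve holonomicity trivially, while $3$-handles attach along boundary $2$-spheres and do not alter the (underived) internal skein module. Composing these functors starting from $\intskmod(H_g)$ yields a holonomic module over $\A_{\Sigma^*}$, which is the desired conclusion.

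The main technical nuance is tracking the gate and disk-removal data throughout the inductive decomposition of $C$. At each intermediate surface, one must arrange that the standard presentations of the transfer bimodules in \eqref{eq:ns-transfer} and \eqref{eq:s-transfer} remain valid, which may require conjugating by suitable mapping cylinders to bring each attaching curve into standard form --- this is precisely what the change of coordinates principle invoked in Proposition~\ref{prop:compression} enables. Consistency of the gate choices with the statement of the theorem follows from the compatibility results of Theorem~\ref{thm:ISA-compatibility}, which govern how forgetting and merging gates interact with the internal skein algebra; the passage to disks of genus-zero boundary components is handled by the evaluation functors $\varepsilon_i$ appearing there.
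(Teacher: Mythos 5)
Your route is the paper's route: a generalised Heegaard splitting, the factorisation of the compression body from Proposition~\ref{prop:compression}, the gluing formula of Corollary~\ref{cor:int-bimod-gluing}, Theorem~\ref{thm:preservation} applied once per elementary $2$-handle, and the observation that mapping-class twists are algebra automorphisms (symplectomorphisms classically) and so preserve holonomicity, with $3$-handles and gate bookkeeping handled as you say. The one genuine gap is your base case for the handlebody. Theorem~\ref{thm:preservation} treats the functor $F=i^*$ (resp.\ $\delta$), i.e.\ $\A(C_\alpha)\otimes_{\A_{g,r}}(-)$: it takes modules over the \emph{larger} algebra attached to the outer boundary of the elementary compression body to modules over the smaller algebra of the compressed boundary. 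Building $H_g$ up from $B^3$ by successive $2$-handle attachments uses the gluing formula in the opposite direction: writing $H_g\cong C_\alpha\circ H_{g-1}$, Corollary~\ref{cor:int-bimod-gluing} gives $\intskmod_{q,G}(H_g)\cong \intskmod_{q,G}(H_{g-1})\otimes_{\A_{g-1,1}}\A_{g-1\rightarrow g}$, a tensor product over the \emph{smaller} algebra, i.e.\ a direct image $i_*$ rather than the inverse image $i^*$. Preservation of holonomicity under this pushforward is a different statement, nowhere proved in the paper, so your induction establishing holonomicity of $\intskmod_{q,G}(H_g)$ does not follow from Theorem~\ref{thm:preservation} as cited.

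The repair is what the paper does: it simply records that the handlebody module is holonomic and feeds it into the transfer functors of the compression body. Concretely, $\intskmod_{q,G}(H_g)$ is cyclic (generated by the empty skein) and is identified with the induced module $\OqG^{\totimes g}$, the quotient of $\A_{g,1}$ by the counit ideal of one $\OqG$-factor in each $\DqG$; hence $\GKdim=g\dim G=\tfrac12\GKdim(\A_{g,1})$, so it quantizes the Lagrangian $G^{g}\subset G^{2g}$ and is holonomic. With that base case supplied, the remainder of your argument --- including the parenthetical reduction from the derived statement of Theorem~\ref{thm:preservation} to the underived module, the automorphism twists for mapping cylinders, and the gate compatibilities via Theorem~\ref{thm:ISA-compatibility} --- coincides with the paper's proof.
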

\begin{proof}
Fix a Heegaard splitting of $M$
\begin{equation*}
    M \cong H_\Sigma^\gamma \circ C_{\Sigma, \partial M}
\end{equation*}
where $C_{\Sigma, \partial M}$ is a compression body obtained by iteratively gluing $2$-handles along multicurves $\alpha$ living in the $\Sigma \times \{1\}$ boundary component of the cylinder $\Sigma\times I$. By Proposition Proposition~\ref{prop:compression} the compression body $C_{\Sigma,\partial M}$ further decomposes to a composition 
\[
C_{\Sigma,\partial M} = M_{f_0}\circ C_1 \circ M_{f_1}\circ \cdots\circ M_{f_{m-1}} \circ C_m \circ X
\]
where $C_i$'s are compression bodies obtained from a 2-handle attachment along a standard curve, $M_{f_i}$ are mapping cylinders and $X$ corresponds to gluing 3-handles. 

In particular, the internal skein module of $M$ decomposes to: 
\[\intskmod_{q,G}(M) \cong \mathcal{T}_{C_m}\otimes_{\A_{\Sigma_{m-1}}}^{f_{m-1}}\cdots \otimes^{f_1}_{\A_{\Sigma_1}}\mathcal{T}_{C_1} \otimes_{\A_{\Sigma}}^{\gamma}\intskmod_{q,G}(H_\Sigma)\]
where we use $\otimes_{\A_{\Sigma_i}}^{f_i}$ to denote the relative tensor product over $\A_{\Sigma_i}$ where the left $\A_{\Sigma_i}$-module is twisted via the mapping class $f_i$.

Twisting a holonomic $\A_\Sigma$-module $M$ by a mapping class $f\in \Mod_{\Sigma}$ yields again a holonomic $\A_\Sigma$-module ${}^f M$. This is because twisting by an algebra automorphism preserves finite generation and the Lagrangian condition as it induces a symplectomorphism. Combining this with Theorem~\ref{thm:preservation} and that $\intskmod_{q,G}(H_\Sigma)$ is holonomic we conclude the proof. 
\end{proof}

\begin{corollary}
	For $q$ generic, the ordinary skein module $\skmod_{q,G}(M)$ is finitely generated over the ordinary skein algebra $\skalg_{q,G}(\Sigma)$. 
\end{corollary}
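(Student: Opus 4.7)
By iterating Theorem~\ref{thm:ISA-compatibility}(2), forgetting a gate corresponds to evaluation at the monoidal unit $\unit$, so the $\unit^{\boxtimes k}$-isotypic component of the internal skein algebra $\A_{\Sigma^\ast}$ recovers the ordinary skein algebra $\skalg_{q,G}(\Sigma^\ast)$ of the underlying punctured surface. The analogous functoriality for modules identifies $\intskmod_{q,G}(M)(\unit^{\boxtimes k})$ with $\skmod_{q,G}(M)$, since interior skeins are insensitive to the boundary disks removed to form $\Sigma^\ast$. Theorem~\ref{thm:main-thm-skein} asserts in particular that $\intskmod_{q,G}(M)$ is finitely generated as an $\A_{\Sigma^\ast}$-module in $\widehat{\mathcal{A}}^{\boxtimes k}$, providing a surjection
\[
\A_{\Sigma^\ast} \otimes V \twoheadrightarrow \intskmod_{q,G}(M)
\]
for some finite-dimensional $V \in \mathcal{A}^{\boxtimes k}$. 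Evaluating at $\unit^{\boxtimes k}$, which is exact by semisimplicity of $\mathcal{A}$, yields a $\skalg_{q,G}(\Sigma^\ast)$-linear surjection $\A_{\Sigma^\ast}(V^\ast) \twoheadrightarrow \skmod_{q,G}(M)$.

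It remains to verify that $\A_{\Sigma^\ast}(V^\ast)$ is itself finitely generated over $\A_{\Sigma^\ast}(\unit^{\boxtimes k}) = \skalg_{q,G}(\Sigma^\ast)$, and then to descend along the capping map to $\skalg_{q,G}(\Sigma)$. Using the presentation $\A_{g,r} \cong \A_{1,1}^{\totimes g} \totimes \A_{0,2}^{\totimes r-1}$ from Corollary~\ref{cor:ISA-separating}, the first task reduces to the analogous claim for the building blocks $\DqG = \OqG \sharp \OqG$ and $\DqFRT = \OqG \sharp \OqFRT$. Via the Peter--Weyl decomposition $\OqG \cong \bigoplus_\lambda V(\lambda)^\ast \otimes V(\lambda)$, the value of $\OqG$ at any finite-dimensional object of $\mathcal{A}$ is a finite direct sum of finitely generated modules over the $\unit$-component; this is the quantum analogue of the classical fact that any finite-rank equivariant vector bundle on $G$ has a finitely generated module of sections over $\mathcal{O}(G)$. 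The smash product structure then promotes the claim to $\DqG$ and $\DqFRT$, and hence to all $\A_{g,r}$.

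Finally, the capping map $\Sigma^\ast \to \Sigma$ induces a surjective algebra map $\skalg_{q,G}(\Sigma^\ast) \twoheadrightarrow \skalg_{q,G}(\Sigma)$, as any skein in $\Sigma \times I$ may be isotoped off the capped disks. The action of $\skalg_{q,G}(\Sigma^\ast)$ on $\skmod_{q,G}(M)$ factors through this surjection, since interior skeins of $M$ do not detect the presence of the caps, so finite generation descends to yield the desired statement. The main technical obstacle lies in the middle step -- the algebraic verification of finite generation of $\A_{\Sigma^\ast}(V^\ast)$ over its $\unit^{\boxtimes k}$-component at the level of the tensor building blocks -- while the descent along capping is essentially formal.
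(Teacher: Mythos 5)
Your proposal shares the same starting point (Theorem~\ref{thm:main-thm-skein}) and the same final capping step, but takes a genuinely different middle route. Instead of exploiting strong equivariance of $\intskmod_{q,G}(M)$, you evaluate a generic surjection $\A_{\Sigma^\ast}\otimes V \twoheadrightarrow \intskmod_{q,G}(M)$ at $\unit^{\boxtimes k}$ to obtain $\A_{\Sigma^\ast}(V^\ast)\twoheadrightarrow\skmod_{q,G}(M)$, and then try to show separately that each isotypic component $\A_{\Sigma^\ast}(V^\ast)$ is finitely generated over $\skalg_{q,G}(\Sigma^\ast)=\A_{\Sigma^\ast}(\unit^{\boxtimes k})$.

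That middle step is where your argument has a genuine gap. The proposed reduction ``via the presentation $\A_{g,r}\cong \A_{1,1}^{\totimes g}\totimes\A_{0,2}^{\totimes r-1}$'' does not straightforwardly reduce the finite-generation claim to the single factors $\DqG$ and $\DqFRT$: the $\unit$-isotypic component of a braided tensor product is not the tensor product of the $\unit$-isotypic components of its factors, so the invariant subalgebra $\A_{g,r}(\unit^{\boxtimes r})$ is not $\DqG(\unit)^{\otimes g}\otimes\DqFRT(\unit)^{\otimes r-1}$. Likewise, the assertion that ``the smash product structure then promotes the claim'' from $\OqG$ to $\DqG=\OqG\sharp\OqG$ is not justified: $\OqG(V(\mu))\cong V(\mu)^\ast$ is finite-dimensional so the single-factor case is trivial, but $\DqG(V(\mu))$ is infinite-dimensional and the finiteness over the invariant subalgebra $\DqG(\unit)$ is a non-trivial (quantum Hilbert-type) statement. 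Whether such a statement can be made to work is plausible, but it would require a separate Noetherianity/local-finiteness argument that your sketch does not supply.

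The paper's proof sidesteps this entirely by invoking strong equivariance: because $\intskmod_{q,G}(M)$ is strongly equivariant, and because the forgetful functor $\A_{\Sigma^\ast}\modu(\Rep_qG)^{\mathrm{str}}\rightarrow \A_{\Sigma^\ast}\modu(\Vect)$ is fully faithful for $q$ generic, a finite generating set may be taken to lie in the $\unit$-component, so one directly obtains a surjection $\skalg_{q,G}(\Sigma^\ast)^{\oplus n}\twoheadrightarrow\skmod_{q,G}(M)$ by evaluating at $\unit$, without needing the isotypic-component finiteness claim. If you want to keep your Peter--Weyl route, you will need to prove directly that $\A_{\Sigma^\ast}(V)$ is finitely generated over the invariants using the Noetherianity of $\A_{\Sigma^\ast}$ and local finiteness of the $U_q\mathfrak{g}$-action, rather than attempting a reduction to the tensor factors.
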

\begin{proof}
    By Theorem~\ref{thm:main-thm-skein} there exists an $\A_{\Sigma^\ast}$-epimorphism in $\Vect$
    \begin{equation*}
        \A_{\Sigma^\ast}^{\oplus n} \twoheadrightarrow\intskmod_{q,G}(M)~
    \end{equation*}
    which lifts to an $\A_{\Sigma\ast}$-epimorphism in $\Ahat=\Rep_qG$ as the forgetful functor
    \[\A_{\Sigma^\ast}\modu(\Rep_qG)^{\mathrm{str}}\rightarrow \A_{\Sigma^\ast}\modu(\Vect) \]
    is fully faithful for $q $ generic.
    However, since 
    which by evaluation on $\unit \in \mathcal{A}$ induces a $\skalg_{q,\GL_2}(\Sigma^\ast)$-module epimorphism 
    \begin{equation*}
        \skalg_{q,G}(\Sigma^\ast)^{\oplus n} \rightarrow \skmod_{q,G}(M)~.
    \end{equation*}
    Finally, the above epimorphism factors through $\skalg_{q,G}(\Sigma)^{\oplus n}\rightarrow \skmod_{q,G}(M)$ by the canonical algebra morphism 
    \begin{equation*}
        \skalg_{q,G}(\Sigma^\ast) \rightarrow \skalg_{q,G}(\Sigma)~.
    \end{equation*}
\end{proof}

\newcommand{\arxiv}[2]{\href{http://arXiv.org/abs/#1}{#2}}
\newcommand{\doi}[2]{\href{http://doi.org/#1}{#2}}

\end{document}